\definecolor{labelkey}{rgb}{0.6,0,0}
\newcommand{\keywords}[1]{%
  \let\@@oldtitle\@title%
  \gdef\@title{\@@oldtitle\footnotetext{\emph{Key words and phrases:} #1.}}%
}
\newcommand{\subjclass}[2][2010]{%
  \let\@oldtitle\@title%
  \gdef\@title{\@oldtitle\footnotetext{#1 \emph{Mathematics subject classification:} #2}}%
}
\renewcommand \theequation {%
\ifnum \c@section>\z@ \@arabic\c@section.%
\fi\@arabic\c@equation} \@addtoreset{equation}{section}
\newtheorem{theorem}{Theorem}[section]
\newtheorem{lemma}[theorem]{Lemma}
\newtheorem{proposition}[theorem]{Proposition}
\theoremstyle{definition}
\theoremstyle{remark}
\newtheorem{remark}{Remark}[section]
\def\XXint#1#2#3{{\setbox0=\hbox{$#1{#2#3}{\int}$ }
\vcenter{\hbox{$#2#3$ }}\kern-.6\wd0}}
\providecommand{\abs}[1]{\left\vert#1\right\vert}
\providecommand{\babs}[1]{\big\vert#1\big\vert}
\providecommand{\Babs}[1]{\Big\vert#1\Big\vert}
\providecommand{\nm}[1]{\left\Vert#1\right\Vert}
\providecommand{\bnm}[1]{\big\Vert#1\big\Vert}
\providecommand{\br}[1]{\left\langle #1 \right\rangle}
\providecommand{\bbr}[1]{\big\langle #1 \big\rangle}
\providecommand{\brv}[1]{\left( #1 \right)}
\providecommand{\Bbr}[1]{\Big\langle #1 \Big\rangle}
\providecommand{\abss}[1]{\left\vert#1\right\vert_{\sigma}}
\providecommand{\nms}[1]{\left\Vert#1\right\Vert_{\sigma}}
\providecommand{\bnms}[1]{\big\Vert#1\big\Vert_{\sigma}}
\providecommand{\tbs}[1]{\left\vert#1\right\vert_{L^2}}
\providecommand{\tnm}[1]{\left\Vert#1\right\Vert}
\providecommand{\btnm}[1]{\big\Vert#1\big\Vert}
\providecommand{\nmsw}[1]{\left\Vert#1\right\Vert_{w_0,\sigma}}
\providecommand{\tnmw}[1]{\left\Vert#1\right\Vert_{w_0}}
\providecommand{\btnmw}[1]{\big\Vert#1\big\Vert_{w_0}}
\providecommand{\nmsww}[1]{\left\Vert#1\right\Vert_{w_1,\sigma}}
\providecommand{\bnmsww}[1]{\big\Vert#1\big\Vert_{w_1,\sigma}}
\providecommand{\tnmww}[1]{\left\Vert#1\right\Vert_{w_1}}
\providecommand{\btnmww}[1]{\big\Vert#1\big\Vert_{w_1}}
\providecommand{\nmswww}[1]{\left\Vert#1\right\Vert_{w_2,\sigma}}
\providecommand{\tnmwww}[1]{\left\Vert#1\right\Vert_{w_2}}
\providecommand{\btnmwww}[1]{\big\Vert#1\big\Vert_{w_2}}
\providecommand{\jump}[1]{\left\llbracket #1 \right\rrbracket }
\def\ud{\mathrm{d}}
\def\dt{\partial_t}
\def\p{\partial}
\def\ls{\lesssim}
\def\gs{\gtrsim}
\def\rt{\rightarrow}
\def\r{\mathbb{R}}
\def\no{\nonumber}
\def\ue{\mathrm{e}}
\def\ds{\displaystyle}
\def\R{\mathbb{R}}
\def\e{\varepsilon}
\def\nx{\nabla_x}
\def\li{\mathcal{L}}
\def\c{\mathcal{C}}
\def\m{\mathbf{M}}
\def\mh{\m^{\frac{1}{2}}}
\def\mhh{\m^{-\frac{1}{2}}}
\def\pk{\mathbf{P}}
\def\ik{\mathbf{I}}
\def\fr{F_R^{\e}}
\def\f{F^{\e}}
\def\fe{f^{\e}}
\def\hp{\hat{p}}
\def\ss{S}
\def\sb{\bar S}
\def\ee{\mathcal{E}}
\def\dd{\mathcal{D}}
\def\yy{Y}
\def\zz{Z}
\def\zzz{\mathcal{Z}}
\def\ww{W}
\def\aa{\mathcal{A}}
\def\kk{\mathcal{K}}
\begin{document}

\title{Hilbert Expansion for the Relativistic Landau Equation}

\author[1]{Zhimeng Ouyang \thanks{zhimeng\_ouyang@alumni.brown.edu}}
\affil[1]{Institute for Pure and Applied Mathematics, University of California, Los Angeles}

\author[2]{Lei Wu \thanks{lew218@lehigh.edu}}
\affil[2]{Department of Mathematics, Lehigh University}

\author[3]{Qinghua Xiao \thanks{xiaoqh@apm.ac.cn}}
\affil[3]{Innovation Academy for Precision Measurement Science and Technology, Chinese Academy of Sciences}

\date{}

\subjclass{76Y05; 35Q20}

\keywords{weighted energy method; local Maxwellian; relativistic Euler equation}

\maketitle

\begin{abstract}
In this paper, we study the local-in-time validity of the Hilbert expansion for the relativistic Landau equation. We justify that solutions of the relativistic Landau equation converge to small classical solutions of the limiting relativistic Euler equations as the Knudsen number shrinks to zero in a weighted Sobolev space. The key difficulty comes from the temporal and spatial derivatives of the local Maxwellian, which produce momentum growth terms and are uncontrollable by the standard $L^2$-based energy and dissipation. We introduce novel time-dependent weight functions to generate additional dissipation terms to suppress the large momentum. The argument relies on a hierarchy of energy-dissipation structures with or without weights. As far as the authors are aware of, this is the first result of the Hilbert expansion for the Landau-type equation.
\end{abstract}

\pagestyle{myheadings} \thispagestyle{plain} \markboth{Z. OUYANG, L. WU, Q. XIAO}{HILBERT EXPANSION FOR RELATIVISTIC LANDAU EQUATION}

\tableofcontents

\bigskip

\section{Introduction}

\subsection{Relativistic Landau Equation}

The relativistic Landau equation is a fundamental model describing the dynamics of a fast moving dilute plasma when the grazing collisions between particles are predominant in the collisions. Let $F^{\e}= F^{\e}(t,x,p)$  be the momentum distribution function for particles at the phase-space position $(x,p)=(x_1,x_2,x_3,p_1,p_2,p_3) \in \mathbb R^3 \times \mathbb R^3$, at time $t \in \mathbb R_+$. Then $F^{\e}$ satisfies the relativistic
Landau equation
\begin{align} \label{main1}
& \dt F^{\e} + \hp \cdot \nx F^{\e} = \frac{1}{\e}\,\c\left[F^{\e},F^{\e}\right],
\end{align}
where $\hp=\frac{cp}{p^0}$, $p^0=\sqrt{m^2c^2+|p|^2}$ is the energy of the particle, constants  $c, m$ are the speed of light and the rest mass of a particle, respectively. $0<\e\ll1$ is the Knudsen number.

Denote the four-momentums $p^{\mu} = \left(p^0, p\right)$ and $q^{\mu} = \left(q^0, q\right)$.  We use the Einstein convention that repeated up-down indices be
summed and we raise and lower indices using the Minkowski metric $g_{\mu\nu}
:=
\text{diag}(-1, 1, 1, 1)$. The Lorentz inner product is then given by
\begin{align}
p^{\mu}q_{\mu}:=-p^0q^0 +
\sum_{i=1}^3 p_iq_i .
\end{align}

Corresponding to \eqref{main1}, at the hydrodynamic level, the plasma obeys the relativistic Euler equations for $(n_0,u,T_0)$:
\begin{align}\label{re}
\left\{
\begin{array}{l}
\dfrac{1}{c}\dt\big(n_0 u^0\big) + \nabla_x\cdot\big(n_0 u\big) =0,\\\rule{0ex}{2.0em}
 \dfrac{1}{c}\dt\Big\{\big(e_0+P_0\big)u^0u\Big\} + \nx\cdot\Big\{\big(e_0+P_0\big)\big(u\otimes u\big)\Big\}+c^2\nx P_0=0,\\\rule{0ex}{2.0em}
\dfrac{1}{c}\dt\Big\{\big(e_0+P_0\big)\left(u^0\right)^2-c^2\abs{u}^2P_0\Big\} + \nx\cdot\Big\{\big(e_0+P_0\big)u^0u\Big\}=0,
\end{array}
\right.
\end{align}
where $n_0$ is the particle number density, $u=(u_1, u_2, u_3)$, $u^0=\sqrt{|u|^2+c^2} $. Here, $P_0=P_0(n_0, T_0)$ is the pressure,
$e_0=e_0(n_0, T_0)$ is the total energy which is the sum of internal energy and the energy in the rest frame. $P_0$ and $e_0$ are two explicit functions of $(n_0,T_0)$  (see \cite{Speck.Strain2011}).

The purpose of
this article is to rigorously prove that solutions of the relativistic Landau equation
\eqref{main1} converge to solutions of the relativistic Euler equations \eqref{re} locally in time, as the Knudsen
number (the mean free path) $\e$ tends to zero.

\subsection{Collision Operator}

From now on, without loss of generality, we may assume the constants $c=m=1$. The collision operator $\c$ on the R.H.S. of \eqref{main1}, which registers binary collisions between particles, takes the following form:
\begin{align} \label{coll}
\c[g ,h] := \nabla_{p}\cdot\left\{\int_{\R^3} \Phi(p,q) \Big[\nabla_pg(p) h(q) -g(p) \nabla_qh(q)\Big]\ud q \right\},
\end{align}
where the collision kernel $\Phi(p,q)$ is a $3\times3$ non-negative matrix
\begin{align}
    \Phi(p,q):=\frac{\Lambda(p,q)}{p^0q^0}\mathcal{S}(p,q)
\end{align}
with
\begin{align}
\Lambda(p,q):=&\left(p^{\mu}q_{\mu}\right)^2\big[\left(p^{\mu}q_{\mu}\right)^2-1\big]^{-\frac{3}{2}},\\
\mathcal{S}(p,q):=&\big[\left(p^{\mu}q_{\mu}\right)^2-1\big]^2I_3-\big(p-q\big)\otimes\big(p-q\big)-\big[\left(p^{\mu}q_{\mu}\right)-1\big]\big(p\otimes q+q\otimes p\big).
\end{align}
It is well-known that $\Phi(p,q)$ satisfies
\begin{align}\label{Phi}
    \sum_{i=1}^3\Phi^{ij}(p,q)\left(\frac{q_i}{q^0}-\frac{p_i}{p^0}\right)=\sum_{j=1}^3\Phi^{ij}(p,q)\left(\frac{q_j}{q^0}-\frac{p_j}{p^0}\right)=0,
\end{align}
and
\begin{align}
    \sum_{i,j}\Phi^{ij}(p,q)w_iw_j\geq0
\end{align}
for any $w:=(w_1,w_2,w_3)\in\r^3$ and $p,q\in\r^3$,
where the equality holds if and only if $w$ is parallel to $\big(\frac{p}{p^0}-\frac{q}{q^0}\big)$.

The collision operator $\c$ satisfies the orthogonality property:
\begin{align}
    \int_{\r^3}\left\{\begin{pmatrix}1\\p\\p^0\end{pmatrix}\c[g,h](p)\right\}\ud p=\mathbf{0},
\end{align}
which, combined with \eqref{main1}, yields the conservation laws
\begin{align}
    &\frac{\ud}{\ud t}\iint_{\r^3\times\r^3}\f(t,x,p)\,\ud p\ud x=
    \frac{\ud}{\ud t}\iint_{\r^3\times\r^3}p^0\f(t,x,p)\,\ud p\ud x=0,\\
    &\frac{\ud}{\ud t}\iint_{\r^3\times\r^3}p\f(t,x,p)\,\ud p\ud x=\mathbf{0}.
\end{align}

\subsection{Hilbert Expansion}

We consider the Hilbert expansion for small Knudsen number $\e$,
\begin{align}\label{expan}
\f(t, x, p):=\sum_{n=0}^{2k-1}\e^nF_n(t, x, p)+\e^k \fr(t, x, p),
\end{align}
for some $k\geq2$. To determine the coefficients $F_n(t, x, p)$, we plug the formal expansion \eqref{expan} into \eqref{main1} to get
\begin{align}\label{expan1}
\\
& \dt \left(\sum_{n=0}^{2k-1}\e^nF_n+\e^k\fr\right) + \hp\cdot \nx \left(\sum_{n=0}^{2k-1}\e^nF_n+\e^k\fr\right)
 = \frac{1}{\e}\c\left[\sum_{n=0}^{2k-1}\e^nF_n
 +\e^k\fr,\sum_{n=0}^{2k-1}\e^nF_n+\e^k\fr\right].\no
\end{align}
Now we equate the coefficients on both sides of equation \eqref{expan1} in front of different powers of
the parameter $\e$ to obtain:
\begin{align}
\e^{-1}:&\quad \c[F_0,F_0]=0,\nonumber\\
\e^0:&\quad\dt F_0+\hp\cdot\nx F_0=\c[F_1,F_0]+\c[F_0,F_1],\nonumber\\
 &\ldots\ldots\label{expan2}\\
\e^n:&\quad \dt F_n+\hp\cdot \nx F_n=\sum_{\substack{i+j=n+1\\i,j\geq0}}\c[F_i,F_j], \nonumber\\
&\ldots\ldots\nonumber\\
\e^{2k-1}:&\quad \dt F_{2k-1}+\hp\cdot\nabla_x F_{2k-1}=\sum_{\substack{i+j=2k\\i,j\geq2}}\c[F_i,F_j].\nonumber
\end{align}
The remainder term $\fr$ satisfies the following equation:
\begin{align}\label{remain}
&\;\dt\fr+\hp\cdot\nx \fr -\frac{1}{\e}\Big\{\c[\fr ,F_0]+\c[F_0,\fr ]\Big\}\\
 =&\;\e^{k-1}\c[\fr ,\fr ]+\sum_{i=1}^{2k-1}\e^{i-1}\Big\{\c[F_i, \fr]+\c[\fr , F_i]\Big\}+\ss,\no
\end{align}
where
\begin{align}
    \ss:=\ds\sum_{\substack{i+j\geq 2k+1\\2\leq i,j\leq2k-1}}\e^{i+j-k}\c[F_i,F_j].
\end{align}
From the first equation in \eqref{expan2}, we can obtain that $F_0$ should be a local Maxwellian:
\begin{equation}\label{r-maxwell}
F_0(t,x,p)=\m(t,x,p):= \frac{n_0\gamma}{4\pi K_2(\gamma)} \exp\big((p^{\mu}u_{\mu})\gamma\big),
\end{equation}
where $\gamma=(k_BT_0)^{-1}$ for the Boltzmann constant $k_B$,
\begin{align}
    K_2(z)=\frac{z^2}{3}\int_1^{\infty}\ue^{-zs}(s^2-1)^{\frac{3}{2}}\ud s,
\end{align}
is the Bessel functions
and $(n_0, u, T_0)(t,x)$ is a solution to the relativistic Euler equations \eqref{re}. Again, without loss of generality, we may take $k_B=1$.

We define the linearized collision operator $\li [f]$ and nonlinear collision operator ${ \Gamma} [f_1, f_2]$ :
\begin{align}\label{rr 02}
\li[f]&:=-\mhh\Big\{\c\big[ \mh  f, \m \big]+\c\big[ \m ,\mh f \big]\Big\}=:-\aa[f]-\kk[f], \\
\Gamma [f_1, f_2]&:=\mhh \c\big[ \mh  f_1, \mh f_2 \big].\label{rr 03}
\end{align}
Note that by direct computation as in \cite[Lemma 1]{Strain.Guo2004}, we can show that the null space of the linearized operator $\li$ is given by
\begin{align}
    \mathcal {N}=\text{span}\left\{\mh , p_i\mh,p^0\mh \right\}.
\end{align}
We denote its complement as $\mathcal{N}^{\perp}$.

We introduce the well-known micro-macro decomposition. Define $\pk$ as the orthogonal projection onto the null space of $\li$:
\begin{align}\label{hydro}
\pk[f](t,x,p):=\mh(t,x,p)\Big\{a_f(t,x)+p\cdot b_f(t,x)+p^0c_f(t,x)\Big\}\in\mathcal{N},
\end{align}
where $a_f$, $b_f$ and $c_f$ are coefficients. When there is no confusion, we will simply write $a,b, c$. Definitely, $\li\big[\pk[f]\big]=0$. Then the operator $\ik-\pk$ is naturally
\begin{align}
(\ik-\pk)[f]:=f-\pk[f],
\end{align}
which satisfies $(\ik-\pk)[f]\in\mathcal{N}^{\perp}$, i.e. $\li[f]=\li\big[(\ik-\pk)[f]\big]$.

We define $\fe$ as
\begin{equation}\label{decom}
\fr(t,x,p) :=\mh(t,x,p) \fe(t,x,p) .
\end{equation}
Then the remainder equation \eqref{remain} can be rewritten as
\begin{align}
&\;\dt\big(\mh\fe\big)
+\hp\cdot\nx\big(\mh\fe\big)+\frac{1}{\e}\mh \li[\fe] \\
=&\;\e^{k-1}\mh \Gamma[\fe,\fe]+\sum_{i=1}^{2k-1}\e^{i-1}\mh \Big\{\Gamma\big[\mhh F_i,\fe\big]
+\Gamma\big[\fe,\mhh F_i\big]\Big\}+\ss,\no
\end{align}
which is actually
\begin{align}\label{re-f}
    &\;\dt\fe+\hp\cdot\nabla_x\fe
    +\frac{1}{\e}\li[\fe] \\
    =&\;\e^{k-1}\Gamma[\fe,\fe] +\sum_{i=1}^{2k-1}\e^{i-1} \Big\{\Gamma\big[\mhh F_i,\fe\big]
    +\Gamma\big[\fe,\mhh F_i\big]\Big\}-\mhh\big(\dt\mh+\hp\cdot\nx\mh\big)\fe+\sb,\nonumber
\end{align}
where $\sb:=\mhh\ss$.

\subsection{Notation and Convention}

Throughout this paper, $C>0$ denotes a constant that only depends on
the domain $\Omega$, but does not depend on the data or $\e$. It is
referred as universal and can change from one inequality to another.
When we write $C(z)$, it means a certain positive constant depending
on the quantity $z$. We write $a\ls b$ to denote $a\leq Cb$.

Let $\brv{\cdot,\cdot}$ denote the $L^2$ inner product in $p\in\r^3$ and $\br{\cdot,\cdot}$ the $L^2$ inner product in $(x,p)\in\r^3\times\r^3$.
Let $\tbs{\,\cdot\,}$ denote the $L^2$ norm in $p\in\r^3$ and $\tnm{\,\cdot\,}$ the $L^2$ norm in $(x,p)\in\r^3\times\r^3$.

Denote the collision frequency
\begin{align}\label{cf}
    \sigma^{ij}(t,x,p):=\int_{\r^3}\Phi^{ij}(p,q)\m(t,x,q)\,\ud q.
\end{align}
Define the $\sigma$ norm in $p\in\r^3$:
\begin{align*}
    \abss{f}^2:=\int_{\r^3}\sigma^{ij}\frac{\p f}{\p p_i}\frac{\p f}{\p p_j}\ud p+\frac{1}{T_0^2}\int_{\r^3}\sigma^{ij}\frac{p_i}{p^0}\frac{p_j}{p^0}\abs{f}^2\ud p,
\end{align*}
and
the $\sigma$ norm in $(x,p)\in\r^3\times\r^3$:
\begin{align}
    \nms{f}^2:=\iint_{\r^3\times\r^3}\sigma^{ij}\frac{\p f}{\p p_i}\frac{\p f}{\p p_j}\ud p\ud x+\frac{1}{T_0^2}\iint_{\r^3\times\r^3}\sigma^{ij}\frac{p_i}{p^0}\frac{p_j}{p^0}\abs{f}^2\ud p\ud x.
\end{align}

\begin{remark}
According to \cite{Lemou2000}, the eigenvalues of $\sigma^{ij}(p)$ is positive associated with the vector $p$. Moreover, the eigenvalues converge to positive constants as $|p|\rightarrow\infty$. Then, for the $|\cdot|_{\sigma}$ norm defined above, we have
\begin{align}\label{sim}
   \frac{1}{T_0} \tbs{f}+\tbs{\nabla_pf}\ls\abss{f}\ls  \frac{1}{T_0} \tbs{f}+\tbs{\nabla_pf}
\end{align}
\end{remark}

Define the usual Sobolev norm
\begin{align}
    \nm{f}_{H^s}:=\sum_{\abs{\alpha}=0}^s\tnm{\p^{\alpha}f},
\end{align}
and the Sobolev $\sigma$ norm
\begin{align}
    \nm{f}_{H^s_{\sigma}}:=\sum_{\abs{\alpha}=0}^s\nms{\p^{\alpha}f},
\end{align}
where $\partial^{\alpha}=\partial^{\alpha_1}_{x_1}\partial^{\alpha_2}_{x_2}\partial^{\alpha_3}_{x_3}$ with $\alpha=(\alpha_1,\alpha_2,\alpha_3)$ and $\abs{\alpha}=\alpha_1+\alpha_2+\alpha_3$.

Define the weights
\begin{align}\label{tt 01}
    w_{\ell}:=(p^0)^{2(N_0-\ell)}\exp\left(\frac{p^0}{5T\ln(\ue+t)}\right),\quad 0\leq \ell\leq 2,
\end{align}
where $N_0\geq3$ is a constant and $T$ is a constant satisfying
\begin{align}\label{tt 01'''}
    T\geq \sup_{t,x} T_0(t,x).
\end{align}
We design this weight such that for some constant $c_0>0$ and $0\leq \ell\leq 2$,
\begin{align}\label{tt 01'}
    w_{\ell}\mh&\ls \ue^{-c_0p^0},\\
    w_{\ell}^2(p^0)^{2\ell}&\leq \frac{1}{2}\left(w_{\ell}^2+w_{0}^2\right).
\end{align}
Correspondingly, define the weighted norms for $0\leq \ell\leq 2$
\begin{align}
    \tnm{f}_{w_{\ell}}:&=\tnm{w_{\ell}f},\qquad\nm{f}_{H^s_w}:=\sum_{\abs{\alpha}=0}^s\tnm{w_{\abs{\alpha}}\p^{\alpha}f} ,\label{wM}\\
    \nm{f}_{w_{\ell},\sigma}:&=\nms{w_{\ell}f},\qquad \nm{f}_{H^s_{w,\sigma}}:=\sum_{\abs{\alpha}=0}^s\nms{w_{\abs{\alpha}}\p^{\alpha}f}.\no
\end{align}

Denote
\begin{align}
    \ww(t)&:=\exp\left(\frac{1}{5T\ln(\ue+t)}\right),\\
    \yy(t)&:=-\frac{W'}{W}=\frac{1}{5T(\ue+t)\big(\ln(\ue+t)\big)^2}.
\end{align}
For the classical solution $\big(n_0(t,x), u(t,x), T_0(t,x)\big)$ to the relativistic Euler equations \eqref{re}, denote
\begin{align}
    \zz&:=\sup_{0\leq t\leq t_0,x\in\mathbb R^3}\left\{\babs{\nabla_{t,x}(n_0,u,T_0)}\frac{(1+T_0)u^0}{T_0^2}\right\},\no\\
    \zzz&:=\sup_{0\leq t\leq t_0,x\in\mathbb R^3,1\leq \ell\leq 3}\left\{\babs{\nabla_{t,x}^{\ell}(n_0,u,T_0)}\right\}.\no
\end{align}

\subsection{Main Results}

We intend to show that for $0\leq t\leq t_0$, $F_R^{\e}\rt0$ as $\e\rt0$, and thus $F^{\e}-\m\rt0$ as $\e\rt0$. Hence, the solution $F^{\e}$ to the relativistic Landau equation will converge to a local Maxwellian $\m$ as the Knudsen number $\e$ shrinks to zero.

\begin{theorem}\label{result 2} Let $F^{\e}(0,x,p)\geq0$, and  $F_0 =\mathbf{M}$ as in \eqref{r-maxwell}. Assume $\big(n_0(t,x), u(t,x), T_0(t,x)\big)$ is a sufficiently small solution to the relativistic Euler equations \eqref{re} satisfying
\begin{align}\label{semp 3'}
    \sup_{0\leq t\leq t_0,x\in\mathbb R^3,1\leq \ell\leq 3}\left\{\babs{\nabla_{t,x}^{\ell}(n_0,u,T_0)}\right\}\ll1,
\end{align}
and
\begin{align}\label{semp 3}
    \sup_{0\leq t\leq t_0,x\in\mathbb R^3}\left\{\babs{\nabla_{t,x}(n_0,u,T_0)}\frac{(1+T_0)u^0}{T_0^2}\right\}<\infty,
\end{align}
where $t_0>0$ fulfills
\begin{align}\label{assump}
\frac{1}{10T(\ue+t_0)\big(\ln(\ue+t_0)\big)^2}\geq \sup_{0\leq t\leq t_0,x\in\mathbb R^3}\left\{\babs{\nabla_{t,x}(n_0,u,T_0)}\frac{(1+T_0)u^0}{T_0^2}\right\}
\end{align}
for $T$ defined in \eqref{tt 01'''}.
Then for the remainder $\fr=\mh\fe$
in \eqref{expan} with $F_i$ defined in \eqref{decom} and $k\geq2$, there exists a constant $\e_0 > 0$ such
that for $0<\e\leq\e_0$ and $0\leq t\leq t_0$, if
\begin{align}\label{semp 2}
    \ee(0)\ls 1,
\end{align}
then there exists a solution $F^{\e}(t,x,p)\geq0$ to \eqref{main1} satisfying
\begin{align}\label{TT0}
    \sup_{0\leq t\leq t_0}\nm{F^{\e}-\m}_{H^2}=O(\e),
\end{align}
and
\begin{align}\label{thm2}
&\sup_{0\leq  t\leq t_0}\ee(t)+\int_0^t\mathcal{D}(s)\ud s\leq \ee(0)+\e^{2k+3},
\end{align}
where
\begin{align}
    \ee\sim&\,\Big(\tnm{\fe}^2+\tnmw{(\ik-\pk)[\fe]}^2\Big)\\
    &\,+\e\Big(\tnm{\nabla_x\fe}^2+\tnmww{\nabla_x(\ik-\pk)[\fe]}^2\Big)\no\\
    &\,+\e^2\Big(\tnm{\nabla_x^2\fe}^2+\e\tnmwww{ \nabla_x^2\fe}^2\Big),\no
\end{align}
\begin{align}
    \dd\sim
    &\,\Big(\e^{-1}\nms{(\ik-\pk)[\fe]}^2+\e^{-1}\nmsw{(\ik-\pk)[\fe]}^2
    +\yy\tnmw{\sqrt{p^0}(\ik-\pk)[\fe]}^2\Big)&\\
    &\,+\Big(\e\nm{\nabla_x\pk[\fe]}
    ^2+\nms{\nabla_x(\ik-\pk)[\fe]}^2+\nmsww{\nabla_x(\ik-\pk)[\fe]}^2+\e^{1}\yy\tnmww{\sqrt{p^0}\nabla_x(\ik-\pk)[\fe]}^2\Big)\no\\
    &\,+\Big(\e^2\nm{\nabla_x^2\pk[\fe]}
    ^2+\e\nms{\nabla_x^2(\ik-\pk)[\fe]}^2+\e^{2}\nmswww{\nabla_x^2(\ik-\pk)[\fe]}^2+\e^{3}\yy\tnmwww{\sqrt{p^0}\nabla_x^2\fe}^2\Big).\no
\end{align}
\end{theorem}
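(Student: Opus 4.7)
The plan is to establish the theorem through a standard continuity/bootstrap argument combined with a carefully structured a priori energy estimate for the remainder equation \eqref{re-f}. Local existence of $f^\e$ on a short time interval follows from a standard linearized iteration scheme using the coercivity of $\li$ on $\mathcal N^\perp$; the core task is therefore to propagate the energy-dissipation inequality $\ee(t)+\int_0^t\dd(s)\,\ud s \leq \ee(0)+\e^{2k+3}$ up to time $t_0$. Under the smallness assumption \eqref{semp 3'} on the Euler profile and the smallness of $\ee(0)$, a bootstrap on $\sup_{[0,t]}\ee(\tau)\leq \delta$ will close because the nonlinear contributions carry powers $\e^{k-1}$ and Sobolev embedding $H^2_x\hookrightarrow L^\infty_x$ turns them into $O(\e^{k-1}\sqrt{\ee}\,\dd)$.

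The centerpiece is a three-tier hierarchy of energy estimates matching the structure of $\ee$ and $\dd$. At each level $|\alpha|=0,1,2$, I would take $\p^\alpha$ of \eqref{re-f}, test against $\p^\alpha f^\e$ (unweighted) and against $w_{|\alpha|}^2\,\p^\alpha f^\e$ (weighted), then integrate in $(x,p)$. The unweighted test produces the standard term $\e^{-1}\nms{(\ik-\pk)[\p^\alpha f^\e]}^2$ via the coercivity of $\li$, while the weighted test produces $\e^{-1}\nm{(\ik-\pk)[\p^\alpha f^\e]}_{w_{|\alpha|},\sigma}^2$ together with the crucial extra dissipation
\begin{align*}
 \yy(t)\,\tnm{\sqrt{p^0}\,w_{|\alpha|}\p^\alpha f^\e}^2
\end{align*}
coming from the time-derivative of the exponential factor $\exp\!\big(p^0/(5T\ln(\ue+t))\big)$ in \eqref{tt 01}. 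The payoff of this design is that the troublesome terms $\mhh(\dt\mh+\hp\cdot\nx\mh)\,\p^\alpha f^\e$, which behave like $|\nabla_{t,x}(n_0,u,T_0)|\cdot p^0 f^\e$ and are uncontrollable by the $\sigma$-dissipation alone, are absorbed by $\yy$-dissipation thanks to assumption \eqref{assump}: it gives precisely $\babs{\nabla_{t,x}(n_0,u,T_0)}(1+T_0)u^0/T_0^2 \leq \tfrac12 \yy(t)$. The increasing powers of $\e$ in front of higher derivatives (plan: multiply the $|\alpha|=1$ estimate by $\e$ and the $|\alpha|=2$ estimate by $\e^2$) serve to absorb cross terms produced by the transport operator acting on the $(t,x)$-dependent Maxwellian.

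The macroscopic dissipation $\e\nm{\nx\pk[f^\e]}^2$ and $\e^2\nm{\nx^2\pk[f^\e]}^2$ that appear in $\dd$ are not produced by the standard energy identity and must be recovered separately. The plan is to exploit the micro-macro decomposition \eqref{hydro}: testing $\p^\alpha$ of \eqref{re-f} against suitable test functions built from $(a,b,c)$-moments of $\mh$, $p_i\mh$, $p^0\mh$ (in the spirit of Guo's local conservation law method), one extracts $\nm{\nx(a,b,c)}$-type quantities bounded by $\e^{-1/2}\nms{(\ik-\pk)[f^\e]}$ plus controllable source terms. The resulting macro estimate is then multiplied by the appropriate $\e$-power and added to the micro hierarchy so that all macroscopic gradients appear with the stated $\e$-weights in $\dd$.

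The main obstacle, and what dictates the intricate weight design, will be the momentum-growth terms produced when derivatives fall on the local Maxwellian $\m(t,x,p)$: every $\nx \mh$ and $\dt\mh$ carries an extra factor of $p^0$, and each $\p^\alpha$ of the source terms involving $F_i$ degrades by the same factor. Controlling these through $\yy$-dissipation alone requires the inequality $w_\ell^2 (p^0)^{2\ell}\leq \tfrac12(w_\ell^2+w_0^2)$ from \eqref{tt 01'} at each of the three levels simultaneously, and matching the $\e$-powers between levels so that no term is left without a dominant partner. Once all three levels are combined and the bootstrap closes, \eqref{thm2} follows, \eqref{TT0} follows from the definition of $\ee$ and the expansion \eqref{expan}, and positivity of $F^\e$ is preserved via a standard maximum-principle or approximation argument applied to the original equation \eqref{main1}.
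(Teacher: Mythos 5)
Your high-level picture (three-tier hierarchy, time-dependent weight creating $\yy$-dissipation, Guo-style macroscopic dissipation via conservation laws, bootstrap, and maximum principle for positivity) matches the paper, but your plan for the weighted tier has a gap that the paper explicitly goes out of its way to avoid. You propose, for every level $|\alpha|=0,1,2$, to test $\p^\alpha$ of \eqref{re-f} against $w_{|\alpha|}^2\,\p^\alpha\fe$. The problem is that the term $\e^{-1}\br{\li[\p^\alpha\fe],w_{|\alpha|}^2\p^\alpha\fe}$ does not decompose cleanly: writing $\p^\alpha\fe=(\ik-\pk)[\p^\alpha\fe]+\pk[\p^\alpha\fe]$, the cross pairing with $\pk[\p^\alpha\fe]$ generates a term of size $\e^{-1}\nm{\pk[\p^\alpha\fe]}^2$. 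For $|\alpha|=0$ this is $\e^{-1}\nm{\pk[\fe]}^2$, which sits in neither $\ee$ nor $\dd$ with a favorable $\e$-power (in $\dd$ the macroscopic content starts at $\e\nm{\nx\pk[\fe]}^2$), so the estimate does not close; for $|\alpha|=1$, after multiplying by $\e$, you are left with $\nm{\nx\pk[\fe]}^2$ against only $\e\nm{\nx\pk[\fe]}^2$ in $\dd$, which again does not close.

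The paper's remedy, which you omitted, is to first apply the microscopic projection $(\ik-\pk)$ to the remainder equation (giving \eqref{re-f'}) and then test the $|\alpha|=0,1$ weighted levels against $w_0^2(\ik-\pk)[\fe]$ and $w_1^2\nx(\ik-\pk)[\fe]$; this eliminates the $\e^{-1}\nm{\pk[\cdot]}^2$ trouble term at the cost of a commutator $\jump{\pk,\tau}$, which in turn costs one extra spatial derivative. Precisely because of that extra derivative, the projection cannot be used at the top level $|\alpha|=2$, so there the paper tests directly against $w_2^2\nx^2\fe$ and compensates by promoting the coefficient to $\e^3$ (not $\e^2$ as you proposed), so that the resulting $\e^{-1}\nm{\nx^2\pk[\fe]}^2$ becomes $\e^2\nm{\nx^2\pk[\fe]}^2$ and is absorbed by the macroscopic dissipation. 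This asymmetry between levels is exactly why $\ee$ and $\dd$ carry $(\ik-\pk)[\fe]$, $\nx(\ik-\pk)[\fe]$ in the lower weighted terms but the full $\nx^2\fe$ (with an extra $\e$) in the top weighted term; your uniform scheme would not reproduce this and would not close.

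A smaller omission: for the positivity, beyond the maximum principle, the paper also constructs a specific admissible $F_R^\e(0)$ (as a signed $\mathbf{M}^\tau$-weighted combination of $\nx^j(n_0,u,T_0)$ and the macroscopic coefficients of the $F_i$) and checks separately on the regions where $\mathbf{M}\gtrless C_0\e\mathbf{M}^\kappa$ that $F^\e(0)\ge 0$; your proposal takes this step for granted.
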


\begin{remark}
Notice that the definition of $\ee$ is equivalent to
\begin{align}
    \ee=&\,\tnmw{\fe}^2+\e\tnmww{\nabla_x\fe}^2+\e^2\tnm{\nabla_x^2\fe}^2+\e^{3}\tnmwww{ \nabla_x^2\fe}^2,
\end{align}
since $\tnm{\pk[f]}_w\simeq\tnm{\pk[f]}$ for any $f$ and $w$.
\end{remark}

\begin{remark}
In \cite[Theorem 1]{Speck.Strain2011}, Speck-Strain proved the following local well-posedness regarding the relativistic Euler equations:\\
{\textit{\textbf{Theorem}.
Under proper regularity conditions, if the initial data $(n_0,u,T_0)$ is sufficiently close to an equilibrium state $(\bar n,0,\bar T)$: for $N\geq3$
\begin{align}
    \nm{(n_0,u,T_0)-(\bar n,0,\bar T)}_{H^N}\leq\delta\ll1,
\end{align}
then there exists a unique solution $(n_0,u,T_0)$ for $t\in[0,t_0]$ with $t_0\geq C\delta^{-1}$ for some constant $C>0$ satisfying
\begin{align}
    \sup_{0\leq t\leq t_0,x\in\mathbb R^3,0\leq \ell\leq N-2}\left\{\babs{\nabla_{t,x}^{\ell}(n_0,u,T_0)}\right\}\ll1.
\end{align}}}
This theorem and the additional assumption \eqref{assump} actually dictate that for $0\leq t\leq t_0$
\begin{align}
    \zz\leq \frac{1}{2}\yy\ \ \text{and}\ \ \zzz\ll1.
\end{align}
This will play a key role in the energy estimates.
In addition, \eqref{semp 3'} and \eqref{semp 3} naturally hold.
\end{remark}

\begin{remark}\label{remark 1}
In this paper, we will focus on deriving the a priori estimate \eqref{thm2}.
Then Theorem \ref{result 2} naturally follows from a standard iteration/fixed-point argument.
Based on the continuity argument (see \cite{Tao2006}), for the energy estimates in Section 3--5, we will assume that
\begin{align}\label{rr 01}
    \sup_{0\leq t\leq t_0}\ee(t)\ls \e^{-\frac{1}{2}},
\end{align}
and try to derive \eqref{thm2}. Then in Section 6, we will in turn verify the validity of \eqref{rr 01} with the help of \eqref{thm2}.
\end{remark}

\begin{remark}
Our method can be applied to general relativistic kinetic equations.
The Hilbert expansion of the two species relativistic Landau-Maxwell system will be pursued in the forthcoming paper.
\end{remark}

\subsection{Background and Literature Review}

The rigorous derivation of fluid equations (like Euler equations or Navier-Stokes equations, etc.) from the kinetic equations (like Boltzmann equation, Landau equation, etc.) has attracted a lot of attentions since the early twentieth century. As an essential step to tackle the well-known Hilbert's Sixth Problem, this type of problems have been extensively studied in many different settings: stationary or evolutionary, linear or nonlinear, strong solution or weak solution, etc.

The Hilbert expansion dates back to 1912 by Hilbert \cite{Hilbert1916}, who proposed an asymptotic expansion of the distribution function solving the Boltzmann equation with respect to the Knudsen number and formally derived the limiting Euler equations.

The rigorous justification of the Euler limit of the Boltzmann equation mainly follows two paths: strong/weak/mild solutions and entropy/renormalized solutions.

On the strong/weak/mild solutions side, the first rigorous proof of the Hilbert expansion for the Boltzmann equation was due to Caflisch \cite{Caflisch1980} via energy method. Later, with the $L^2-L^{\infty}$ framework introduced in Guo \cite{Guo2010}, Guo-Jang-Jiang \cite{Guo.Jang.Jiang2009} improved Caflisch's result and removed the assumption on the initial data $F^{\e}_R(0,x,v)=0$. This framework was extended to treat the Vlasov-Poisson-Boltzmann system in Guo-Jang \cite{Guo.Jang2010} and the relativistic Boltzmann equation in Speck-Strain \cite{Speck.Strain2011}. Recently, this framework was further developed to the investigation of the relativistic Vlasov-Maxwellian-Boltzmann system in Guo-Xiao \cite{Guo.Xiao2021} and the Boltzmann equation with boundary conditions in half-space in Guo-Huang-Wang \cite{Guo.Huang.Wang2021}, Jiang-Luo-Tang \cite{Jiang.Luo.Tang2021,Jiang.Luo.Tang2021(=)}.

It is worth noting that Nishida \cite{Nishida1978} proposed another approach to use the Cauchy-Kovalevskaya theory to justify the convergence when the initial data of the Boltzmann equation is a local Maxwellian with analytic data. We also refer to Ukai-Asano \cite{Ukai.Asano1983} and De Masi-Esposito-Lebowitz \cite{DeMasi.Esposito.Lebowitz1989}.

On the entropy/renormalized solutions side, we refer to Bardos-Golse \cite{Bardos.Golse1984}. There are successful applications of this approach to the incompressible Euler/Navier-Stokes limit. We refer to Golse-Saint-Raymond \cite{Golse.Saint-Raymond2001, Golse.Saint-Raymond2004}, Saint-Raymond \cite{Saint-Raymond2003}, Masmoudi-Saint-Raymond \cite{Masmoudi.Saint-Raymond2003}, Arsenio-Saint-Raymond \cite{Arsenio.Saint-Raymond2019}, Bardos-Golse-Levermore \cite{Bardos.Golse.Levermore1991, Bardos.Golse.Levermore1993, Bardos.Golse.Levermore1998}, Briant \cite{Briant2015(=)}, Briant-Merino-Aceituno-Mouhot \cite{Briant.Merino-Aceituno.Mouhot2019}, Gallagher-Tristani \cite{Gallagher.Tristani2020}, Lions-Masmoudi \cite{Lions.Masmoudi2001} and Masmoudi \cite{Masmoudi2002}.

For the convergence of the Boltzmann equation to the basic waves of the Euler equations: the shock waves, rarefaction waves and contact discontinuity, the interested readers may refer to Huang-Wang-Yang \cite{Huang.Wang.Yang2010, Huang.Wang.Yang2010(=), Huang.Wang.Yang2013}, Xin-Zeng \cite{Xin.Zeng2010} and Yu \cite{Yu2005}.

Due to the huge number for all kinds of time scales and models, it is almost impossible to give a complete list of all the related publications. It is worth noting that the books by Saint-Raymond \cite{Saint-Raymond2009} and by Golse \cite{Golse2014} and the references therein provide a nice summary of the progress.

For the investigation of the relativistic Landau/Fokker-Planck equation (including physical models and global well-posedness), we refer to Danielewicz \cite{Danielewicz1980}, Hsiao-Yu \cite{Hsiao.Yu2006}, Lemou \cite{Lemou2000}, Luo-Yu \cite{Luo.Yu2016}, Lyu-Sun-Wu \cite{Lyu.Sun.Wu2022}, Strain-Taskovi\'c \cite{Strain.Taskovic2019} and Yang-Yu \cite{Yang.Yu2010}. We also mention the recent works about the relativistic Boltzmann equation without angular cutoff in Jang-Strain \cite{Jang.Strain2021, Jang.Strain2021(=)}. For the relativistic Vlasov-Maxwell-Landau system, we refer to Strain-Guo \cite{Strain.Guo2004} and Yang-Yu \cite{Yang.Yu2012}.

Though the global well-posedness and decay of the relativistic Landau equation has been well-studied, the rigorous justification of its hydrodynamic limits is largely open. As far as the authors are aware of, this paper is the first result of the Hilbert expansion of the Landau-type equation.

\subsection{Difficulties and Upshots}

We will mainly employ a weighted energy method to justify the remainder estimate.
The key difficulty lies in the control of the momentum growth linear term
\begin{align}
    \mhh\Big(\dt\mh+\hp\cdot\nx\mh\Big)\fe\sim p^0\fe,
\end{align}
which stems from the local Maxwellian $\mh$ in $\f_R=\mh\fe$. The standard energy-dissipation structure of the remainder equation \eqref{re-f} provides
\begin{align}\label{int0}
    \dt\tnm{\fe}^2+\e^{-1}\delta\bnms{(\ik-\pk)[\fe]}^2
    &\leq \abs{\br{\mhh\Big(\dt\mh+\hp\cdot\nx\mh\Big)\fe,\fe}}+\text{other terms}\\
    &\leq\sup_{0\leq t\leq t_0,x\in\mathbb R^3}\left\{\babs{\nabla_{t,x}(n_0,u,T_0)}\frac{(1+T_0)u^0}{T_0^2}\right\}\btnm{\sqrt{p^0}\fe}^2+\text{other terms}.\no
\end{align}
Then for large $p^0$, there is no way to control $\btnm{\sqrt{p^0}\fe}^2$ by either the energy term $\tnm{\fe}^2$ or dissipation term $\e^{-1}\nms{(\ik-\pk)[\fe]}^2$. This difficulty cannot be resolved even if we introduce time-independent momentum weight function $w(p)$.

This motivates us to delicately design a time-dependent exponential momentum weight function $w(t,p)\sim\exp\left(\frac{p^0}{5T\ln(\ue+t)}\right)$, which introduces an additional dissipation term in the weighted energy estimate
\begin{align}
  \br{\partial_t\fe,w^2\fe}=\frac{1}{2}\frac{\ud}{\ud t}\tnm{w\fe}^2+\yy\btnm{\sqrt{p^0}\fe}^2.
\end{align}
Then combined with \eqref{assump}, the momentum growth term is successfully controlled.

In addition, in order to handle the nonlinear term $\Gamma$, we have to control $L^{\infty}$ norm of $\fe$, which in turn requires spatial regularity up to $H^2$. The more derivatives hit $\m$, the more $p^0$ will be generated. Hence, we have to carefully design a hierarchy of weighted functions $w_{\ell}$ as in \eqref{tt 01} to control all kinds of interactions and nonlinear terms in the energy-dissipation structure. In particular, $T$ satisfies \eqref{tt 01'''} so that
\begin{align}
\frac{2p^0}{5T\ln(\ue+t)}+\frac{p^{\mu}u_{\mu}}{2T_0}<0,
\end{align}
due to the smallness of $u$. This yields that for some constant $c_0>0$
\begin{align}
    w_{\ell}\mh&\ls \ue^{-c_0p^0},
\end{align}
which helps to control the cross terms with both $w_{\ell}\mh$ and polynomial growth in $p^0$.

As an important byproduct, it is worth noting that we don't require an explicit lower bound of $T_0$:
\begin{align}\label{semp 1}
    T_M<\max_{t,x}T_0(t,x)<2T_M
\end{align}
for some constant $T_M>0$. This extra restriction on $T_0$ was assumed in \cite{Guo.Jang.Jiang2009} and \cite{Guo-Xiao-CMP-2021} about the Hilbert expansion of the Boltzmann equation with $L^2$--$L^{\infty}$ framework, which is an important technical yet artificial requirement.
To be more precise, denote
\begin{align}
    F^{\e}_R= \sqrt{\mathbf{M}}f^{\e}=: (p^0)^{-\beta}\sqrt{J_{M}}h^{\e},
\end{align}
for some positive constant $\beta$ and $J_M$ a global Maxwellian with temperature $T_M$. In the $L^2$ estimate, to show the key estimate
\begin{align}
    p^0 |f^{\e}|^2\ls (p^0)^{1-2\beta}|h^{\e}|^2,
\end{align}
one must have $\sqrt{\mathbf{M}}>\sqrt{J_{M}}$, or equivalently $T_0(t,x)>T_M$.

Instead of the previous $L^2$--$L^{\infty}$ framework, we estimate $\fe$ in the weighted Sobolev space defined in \eqref{wM}, where only spatial derivatives are included.
Thanks to the term $ \bnm{\frac{1}{T_0}\fe}$ in the $\sigma$ norm, integrals similar to the first term on the right hand side of \eqref{int0} in the derivative estimates can be controlled without additional assumption on lower bound of $T_0$.

Nevertheless, this hierarchy of weighted energy method produces new difficulties, especially from the linear collision operator term $\frac{\li[\fe]}{\e}$ and the macroscopic part $\pk[\fe]$.

On the one hand, the linear collision operator term $\frac{\li[\fe]}{\e}$ and $\pk[\fe]$ do not commute with the spatial derivative operator $\nx$, and thus we have to bound the commutator $\jump{\li[\fe],\nx}$. This difficulty was also present in the third author's previous work with Guo \cite{Guo-Xiao-CMP-2021}.
In the derivative estimate, we have
\begin{align} \label{example1}
  \br{\frac{1}{\e}\nx\li[\fe],\nx\fe}&=\br{ \frac{1}{\e}\li\Big[\nx(\ik-\pk)[\fe]\Big],(\ik-\pk)\nx[\fe]}-\br{\frac{1}{\e}\jump{\li,\nx}(\ik-\pk)[\fe], \nx\fe}\\
    &\geq \frac{\delta}{\e} \bnms{\nx(\ik-\pk)[\fe]}^2-\frac{C\zz}{\e^2}\bnms{(\ik-\pk)[\fe]}^2-C\zz\nm{\fe}^2_{H^1}.\no
\end{align}
Noting that only $\e^{-1}\bnms{(\ik-\pk)[\fe]}^2$ is included in \eqref{int0},
this implies that we have to pay the cost of $\frac{1}{\e}$ for the derivative estimate $\bnm{\nx\fe}^2$. This is the very reason to include $\e\bnm{\nx\fe}^2$ and $\e^2\nm{\nx^2\fe}^2$ in $\mathcal{E}(t)$.

On the other hand, in the weighted energy estimate
\begin{align}\label{example2}
    \br{\frac{1}{\e}\li[\fe],w_0^2\fe}&=\br{ \frac{1}{\e}\li\Big[(\ik-\pk)[\fe]\Big],w_0^2(\ik-\pk)[\fe]+w_0^2\pk[\fe]}\\
    &\geq \frac{\delta}{\e} \nms{w_0(\ik-\pk)[\fe]}^2-\frac{C}{\e}\nms{(\ik-\pk)[\fe]}^2-\frac{C}{\e}\nms{w_0\pk[\fe]}^2\no\\
    &\geq\frac{\delta}{\e} \nms{w_0(\ik-\pk)[\fe]}^2-\frac{C}{\e}\nms{(\ik-\pk)[\fe]}^2-\frac{C}{\e}\nm{\pk[\fe]}^2,\no
\end{align}
while $\nm{\pk[\fe]}^2$ cannot be controlled by the dissipation terms in $\dd$. Noting that due to the Maxwellian in the macroscopic part  $\pk[\fe]$, we naturally have $\tnm{w_0\pk[\fe]}\ls\tnm{\fe}$, and thus the weight function only takes effect for the microscopic part $(\ik-\pk)[\fe]$. Therefore, we may first apply the microscopic projection $(\ik-\pk)$ onto the $\fe$ equation \eqref{re-f},
and directly estimate $w_0(\ik-\pk)[\fe]$ and $w_1\nx(\ik-\pk)[\fe]$. Then the  trouble term $\frac{C}{\e}\nm{\pk[\fe]}^2$  in \eqref{example2} would not appear.

However, this $(\ik-\pk)$ projection in turn generates another commutator $\jump{\pk,\hat{p}\cdot\nx}$, which
may be controlled as
\begin{align}\label{semp 4}
  \Bbr{\jump{\pk,\hat{p}\cdot\nx}[\fe], w_0(\ik-\pk)[\fe]}\ls\frac{1}{\e} \nms{(\ik-\pk)[\fe]}^2+\e \nm{\nx\fe}^2.
\end{align}
The term $\e \nm{\nx\fe}^2$ cannot be controlled by the dissipation term $\e\bnms{(\ik-\pk)[\nx\fe]}^2$ in $\dd$. Similar to \eqref{semp 4}, in the weighted first-order derivative estimate, linear term $\e^2 \nm{\nx^2\fe}^2$ arises. This
reveals that the $(\ik-\pk)$ projection argument requires estimate of one more derivative (e.g. in order to bound $w_0(\ik-\pk)[\fe]$, we need the control of $\tnm{\nx\fe}$), and thus the microscopic projection cannot be applied to the highest order derivatives. Hence, we have to directly perform the weighted energy estimate for $\nx^2\fe$, which in turn calls for $\e^{3}\nm{w_2^2\nx^2\fe}^2$ in $\mathcal{E}(t)$ and  leads to the trouble term $\e^{2}\nm{\nx^2\pk[\fe]}$ again.

To control the trouble linear terms  $\e\bnm{\nabla_x\pk[\fe]}$ and $\e^2\nm{\nabla_x^2\pk[\fe]}$, we need to capture the macroscopic structure of the remainder equation \eqref{re-f}. The macroscopic dissipation estimates for $\e\bnm{\nabla_x\pk[\fe]}$ and $\e^2\nm{\nabla_x^2\pk[\fe]}$ are given in Section~\ref{Sec:macro-dissipation}. Motivated by \cite{Guo2006}, we give the the proof combining the local conservation laws and the macroscopic equations. We write the macroscopic quantities as
\begin{align}
\pk[\fe]:=\mh\bigg(a^{\e}(t,x)+p\cdot b^{\e}(t,x)+p^0c^{\e}(t,x)\bigg)\in\mathcal{N},
\end{align}
and obtain the conservation law equations for $a^{\e}, b^{\e}, c^{\e}$.
Although these equations are very complicated, we only need to focus on main terms corresponding to the global Maxwellian case as in \cite{Liu.Zhao2014} and \cite{Strain.Guo2004} without the electromagnetic field. Together with explicit equations of $a^{\e}, b^{\e}, c^{\e}$ obtained from comparing coefficients of fourteen moments, we can obtain the macroscopic dissipation estimates by similar arguments in \cite[Lemma 6.1]{Guo2006}.

Finally, we collect the no-weight energy estimates, weighted energy estimates, and macroscopic dissipation estimates and make proper linear combination among them to close the whole energy estimates.

In addition, motivated by \cite{Guo2002} and \cite{Guo.Hwang.Jang.Ouyang2020}, we justify the positivity of the solution $F^{\e}$. We first perform a careful analysis of the construction of the initial data and prove that $F^{\e}(0)\geq0$. Then by analyzing the elliptic structure of the relativistic Landau equation, we show the validity of maximum principle and conclude that $F^{\e}(t)\geq0$ for all $t\geq 0$.

\subsection{Organization of This Paper}

This paper is organized as follows: In Section~\ref{Sec:prelim}, we prove several preliminary lemmas regarding the relativistic Landau operator; Section~\ref{Sec:no-weight-energy}\,--\,\ref{Sec:macro-dissipation} focus on the no-weight and weighted energy-dissipation structures of $\fe$ up to second-order derivatives; finally, in Section~\ref{Sec:main-thm-pf}, we prove the main theorem.

\bigskip

\section{Preliminaries} \label{Sec:prelim}

\begin{lemma}\label{ss 06}
We have
\begin{align}
    \abs{\mhh\dt\mh}+\abs{\mhh\nabla_x\mh}&\leq p^0\zz,\\
    \abs{\nabla_x^{\ell}\left(\mhh\dt\mh\right)}+\abs{\nabla_x^{\ell}\left(\mhh\nabla_x\mh\right)}&\ls \frac{1}{T_0^2}\left(p^0\right)^{\ell+1}\zzz,
\end{align}
for $\ell\geq1$.
\end{lemma}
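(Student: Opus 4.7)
The plan is to reduce both bounds to controlling derivatives of $\log\m$ via the logarithmic-derivative identities
\begin{align*}
\mhh\dt\mh = \tfrac{1}{2}\dt\log\m, \qquad \mhh\nx\mh = \tfrac{1}{2}\nx\log\m.
\end{align*}
From \eqref{r-maxwell} one has $\log\m=\Phi_0+\Phi_1$ with $\Phi_0:=\log\!\big(n_0\gamma/(4\pi K_2(\gamma))\big)$ depending only on $(n_0,T_0)$ and $\Phi_1:=(p^\mu u_\mu)\gamma$ affine in $p$. The two workhorse pointwise bounds are $|p^\mu u_\mu|\leq 2p^0 u^0$ (since $|p\cdot u|\leq p^0|u|\leq p^0 u^0$) and $|p^\mu\dt u_\mu|\leq 2p^0|\dt u|$ (since $|\dt u^0|=|u\cdot\dt u|/u^0\leq|\dt u|$), with identical estimates for spatial derivatives.

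For the first bound, direct differentiation gives $\dt\Phi_1=(p^\mu\dt u_\mu)\gamma+(p^\mu u_\mu)\dt\gamma$, so using $\dt\gamma=-\dt T_0/T_0^2$ together with the above,
\begin{align*}
|\dt\Phi_1|\ls p^0\frac{|\dt u|}{T_0}+p^0 u^0\frac{|\dt T_0|}{T_0^2}\ls p^0\,\babs{\nabla_{t,x}(n_0,u,T_0)}\,\frac{(1+T_0)u^0}{T_0^2},
\end{align*}
where the last step uses $1/T_0\leq(1+T_0)u^0/T_0^2$ (because $u^0\geq 1\geq T_0/(1+T_0)$). The $p$-independent piece $\dt\Phi_0$ is a smooth bounded function of $(n_0,T_0)$ times $|\nabla_{t,x}(n_0,T_0)|$ and is absorbed into the same expression. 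This yields $|\mhh\dt\mh|\leq p^0\zz$, and $\nx\mh$ is handled identically.

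For $\ell\geq1$, apply $\nx^\ell$ to the same identities; by Leibniz and the chain rule, $\nx^{\ell+1}\log\m$ expands as a sum of products of derivatives of $(n_0,u,T_0)$ of total order at most $\ell+1$, multiplied by at most one factor of the form $p^\mu\p^\alpha u_\mu$ or $p^\mu u_\mu$ (since $\Phi_1$ is affine in $p$). Each summand is therefore linear in $p^0$, and the crude bound $p^0\leq(p^0)^{\ell+1}$ is enough for the statement. The explicit factor $1/T_0^2$ arises from the single differentiation of $\gamma$ (or from the normalization $n_0\gamma/(4\pi K_2(\gamma))$), and the uniform bounds on $(n_0,u,T_0)$ and their derivatives up to order three guaranteed by \eqref{semp 3'} let all other factors absorb into $\zzz$, giving $\ls(p^0)^{\ell+1}\zzz/T_0^2$.

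The main technical nuisance is the bookkeeping around the normalization $\log\!\big(n_0\gamma/(4\pi K_2(\gamma))\big)$: its higher derivatives involve $K_2'(\gamma), K_2''(\gamma),\ldots$, but each is a smooth function of $(n_0,T_0)$ that remains bounded under \eqref{semp 3'}, so these contributions absorb cleanly into the constants implicit in $\ls$. Everything else reduces to elementary inequalities once the logarithmic-derivative reduction is set up.
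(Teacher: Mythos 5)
Your logarithmic-derivative reduction $\mhh\dt\mh=\tfrac12\dt\log\m$, the observation that $\log\m$ is affine in $p$, and the Lorentz estimates $|p^\mu u_\mu|\le 2p^0u^0$, $|p^\mu\dt u_\mu|\le 2p^0|\dt u|$ are exactly the ``direct computation'' the paper's one-line proof alludes to, and you correctly identify that every summand in $\nabla_x^\ell\dt\log\m$ or $\nabla_x^{\ell+1}\log\m$ carries at most one affine factor in $p$, so the stated $(p^0)^{\ell+1}$ is in fact a generous overcount of a linear-in-$p^0$ quantity. In this sense your proof is the same argument, fleshed out.

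One piece of bookkeeping is stated too casually: you attribute the $1/T_0^2$ factor in the $\ell\ge1$ case to ``the single differentiation of $\gamma$,'' but for $\ell\ge1$ the Leibniz/chain-rule expansion of $\nabla_x^j\gamma=\nabla_x^j(1/T_0)$ produces terms like $(\nabla_x T_0)^2/T_0^3$ and $(\nabla_x T_0)^3/T_0^4$, which carry higher inverse powers of $T_0$. Likewise $\dt\Phi_0$ contains $\dt n_0/n_0$, which needs $n_0$ bounded below. These extra factors are harmless here only because the solution $(n_0,u,T_0)$ produced by the Speck--Strain local existence theorem stays uniformly close to the constant state $(\bar n,0,\bar T)$, so $T_0$ and $n_0$ are bounded away from zero and the surplus inverse powers, together with the small-derivative factors from \eqref{semp 3'}, can be absorbed into the implicit constant in $\ls$. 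You should say this explicitly rather than asserting that the $1/T_0$-dependence is exhausted by a single differentiation, since the paper itself advertises that no \emph{artificial} lower bound on $T_0$ of the $T_M<T_0<2T_M$ form is imposed, which could mislead a reader into thinking no lower bound at all is in play.

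Finally, the first displayed inequality in the lemma is a genuine $\le$ rather than a $\ls$, and the factor of $\tfrac12$ coming from the logarithmic derivative is exactly what compensates the factor of $2$ in $|p^\mu u_\mu|\le 2p^0u^0$ and $|p^\mu\dt u_\mu|\le 2p^0|\dt u|$; it is worth spelling this cancellation out since the clean constant $p^0\zz$ is used downstream in tandem with \eqref{assump} to absorb the momentum-growth term by $\tfrac12\yy$.
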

\begin{proof}
Direct computation and the assumption \eqref{assump}  can justify this.
\end{proof}

\begin{lemma}\label{ss 07}
For the operators $\aa, \kk$ and $\Gamma$ in \eqref{rr 02} and \eqref{rr 03}, we have
\begin{align}\label{mathA}
\aa[f]=\partial_{p_i}\big(\sigma^{ij}\partial_{p_j}f\big)
-\frac{\sigma^{ij}}{4T_0^2}\big(u_0\hat{p}_i-u_{i}\big)\big(u_0\hat{p}_j-u_{j}\big)f+\frac{1}{2T_0}\partial_{p_i}\Big(\sigma^{ij}\big(u_0\hat{p}_j-u_{j}\big)\Big)f,
\end{align}
\begin{align}\label{mathK}
\kk[f]=&\left(\partial_{p_i}-\frac{u_0\hat{p}_i-u_i}{2T_0}\right)\int_{{\mathbb R}^3} \Phi^{ij} (p,q) \mh(p)\mh(q)\left(\frac{-u_0\hat{q}_j+u_j}{2T_0}f(q)-\partial_{q_j}f(q)\right)\ud q ,
\end{align}
and
\begin{align}\label{Gamma}
\Gamma [f,g]=&\left(\partial_{p_i}-\frac{u_0\hat{p}_i-u_i}{2T_0}\right)\int_{{\mathbb R}^3} \Phi^{ij} (p,q) \mh(q)\Big(\partial_{p_j}f(p)g(q)-f(p)\partial_{q_j}g(q)\Big)\ud q .
\end{align}
\end{lemma}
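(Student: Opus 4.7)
The plan is to derive all three identities by directly substituting the local Maxwellian \eqref{r-maxwell} into the collision operator \eqref{coll} and then systematically invoking the null-space property \eqref{Phi} of the kernel $\Phi^{ij}$. The single computational input is the pair of identities
\begin{align*}
\p_{p_j}\m^{\pm\frac{1}{2}} = \mp\,\m^{\pm\frac{1}{2}}\,\frac{u_0\hp_j - u_j}{2T_0},
\end{align*}
obtained by differentiating $\ln\m = \ln C(t,x) + T_0^{-1}p^{\mu}u_{\mu}$ and using $\p_{p_j}p^0 = \hp_j$. This gives the conjugation rule $\mhh\,\p_{p_i}\big(\mh h\big) = \big(\p_{p_i} - \frac{u_0\hp_i - u_i}{2T_0}\big)h$, which converts every premultiplication by $\mhh$ into precisely the first-order operator that appears on the right-hand sides of \eqref{mathA}, \eqref{mathK}, and \eqref{Gamma}.

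For $\Gamma[f,g]$ I would expand $\c[\mh f,\mh g]$ using the product rule inside both $\p_{p_j}$ and $\p_{q_j}$, factor $\mh(p)$ out of the $q$-integral, and observe that the four terms that are zeroth order in derivatives of $f,g$ assemble into
\begin{align*}
\sum_{j}\Phi^{ij}(p,q)\Big[-\tfrac{u_0\hp_j - u_j}{2T_0} + \tfrac{u_0\hat{q}_j - u_j}{2T_0}\Big]f(p)g(q) = \frac{u_0}{2T_0}\sum_j \Phi^{ij}(\hat{q}_j - \hp_j)\,f(p)g(q) = 0
\end{align*}
by \eqref{Phi}. What survives is $\mh(p)\int\Phi^{ij}\mh(q)\big[\p_{p_j}f\,g(q) - f\,\p_{q_j}g\big]\,\ud q$, and premultiplying by $\mhh(p)$ then yields \eqref{Gamma} via the conjugation rule.

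For $\aa[f]$ I would repeat this procedure on $\c[\mh f,\m]$; because the second slot is the full Maxwellian, the $q$-integrals collapse onto the collision frequency $\sigma^{ij}$ defined in \eqref{cf}. Applying \eqref{Phi} once more --- replacing $\hat{q}_j$ by $\hp_j$ inside $\sum_j\Phi^{ij}(\cdot)$ --- reduces the integrand to $\sigma^{ij}\p_{p_j}f + \sigma^{ij}\tfrac{u_0\hp_j - u_j}{2T_0}f$. Expanding the outer operator $\p_{p_i} - \tfrac{u_0\hp_i - u_i}{2T_0}$ and using the symmetry $\sigma^{ij} = \sigma^{ji}$ causes the two cross terms linear in $\p_{p_i}f$ to cancel, leaving exactly the three terms of \eqref{mathA}.

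For $\kk[f]$ I would expand $\c[\m,\mh f]$ and split $\m(p) = \mh(p)\cdot\mh(p)$ so that $\mhh(p)\p_{p_i}$ again produces the operator $\p_{p_i} - \tfrac{u_0\hp_i - u_i}{2T_0}$, with one factor of $\mh(p)$ absorbed into the integrand. Inside the $q$-integral there are three $f(q)$-type contributions, namely $-\tfrac{u_0\hp_j - u_j}{T_0}$, $-\p_{q_j}$ acting on $f$, and $+\tfrac{u_0\hat{q}_j - u_j}{2T_0}$; by \eqref{Phi} the first and third combine into $\tfrac{-u_0\hat{q}_j + u_j}{2T_0}$, giving \eqref{mathK}. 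The only real obstacle in all three derivations is bookkeeping --- tracking signs through the product-rule expansions and ensuring each invocation of \eqref{Phi} is applied at the correct summation level, i.e.\ after pulling the $j$-independent factors out of the $q$-integral. No analytical ingredient beyond the definitions is needed.
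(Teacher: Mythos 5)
Your proposal is correct and follows essentially the same route as the paper's proof: it uses the conjugation rule $\mhh\,\p_{p_i}\big(\mh h\big) = \big(\p_{p_i} - \tfrac{u_0\hp_i - u_i}{2T_0}\big)h$ derived from $\p_{p_j}\m^{\pm 1/2} = \mp\,\m^{\pm1/2}\tfrac{u_0\hp_j - u_j}{2T_0}$, invokes \eqref{Phi} to swap $\hat q_j$ and $\hp_j$ (simplifying the zeroth-order pieces, which vanish outright for $\Gamma$ and halve for $\aa$ and $\kk$), collapses the $q$-integral to $\sigma^{ij}$ for $\aa$, and uses $\sigma^{ij}=\sigma^{ji}$ to cancel the cross terms. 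The only stylistic difference is that the paper computes term-by-term in a chain of displayed equalities rather than framing everything around the conjugation rule, but the underlying computation is identical.
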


\begin{proof}
This corresponds to \cite[Lemma 6]{Strain.Guo2004}. We first prove \eqref{mathA}. From the definition of the operator $\aa$ and \eqref{Phi}, we have
\begin{align}
\aa[f]&=\mhh (p)\partial_{p_i}\int_{{\mathbb R}^3} \Phi^{ij} (p,q) \left(\partial_{p_j}\big[\mh f\big](p)\mathbf{M}(q)-\big[\mh f\big](p)
\partial_{q_j}\mathbf{M}(q)\big]\right)\ud q\\
&=\mhh (p)\partial_{p_i}\int_{{\mathbb R}^3} \Phi^{ij} (p,q) \mh (p)\mathbf{M}(q)\left(\left(-\frac{u_0\hat{p}_j-u_j}{2T_0}+\frac{u_0\hat{q}_j-u_j}{T_0}\right)f(p)+\partial_{p_j}f(p)\right)\ud q\nonumber\\
&=\mhh (p)\partial_{p_i}\int_{{\mathbb R}^3} \Phi^{ij} (p,q) \mh (p)\mathbf{M}(q)\left(\frac{u_0\hat{p}_j-u_j}{2T_0}f(p)+\partial_{p_j}f(p)\right)\ud q\nonumber\\
&=\partial_{p_i}\bigg(\sigma^{ij}(p)\left(\frac{u_0\hat{p}_j-u_j}{2T_0}f(p)+\partial_{p_j}f(p)\right)\bigg)
-\sigma^{ij}(p)\frac{u_0\hat{p}_i-u_i}{2T_0}\left(\partial_{p_j}f(p)+\frac{u_0\hat{p}_j-u_j}{2T_0}\right) \nonumber\\
&=\partial_{p_i}\Big(\sigma^{ij}\partial_{p_j}f\Big)
-\frac{\sigma^{ij}}{4T_0^2}\big(u_0\hat{p}_i-u_{i}\big)
\big(u_0\hat{p}_j-u_{j}\big)f+\frac{1}{2T_0}\partial_{p_i}\Big(\sigma^{ij}\big(u_0\hat{p}_j-u_{j}\big)\Big)f\nonumber.
\end{align}
Similarly, for \eqref{mathK}, we can obtain that
\begin{align}
\mathbf{K}f&=\mhh (p)\partial_{p_i}\int_{{\mathbb R}^3} \Phi^{ij} (p,q) \left(\partial_{p_j}\mathbf{M}(p)\big[\mh f\big](q)-\mathbf{M}(p)
\partial_{q_j}\big[\mh f\big](q)\big]\right)\ud q\\
&=\mhh (p)\partial_{p_i}\int_{{\mathbb R}^3} \Phi^{ij} (p,q) \mh (q)\mathbf{M}(p)\left(\left(-\frac{u_0\hat{p}_j-u_j}{T_0}+\frac{u_0\hat{q}_j-u_j}{2T_0}\right)f(q)-\partial_{q_j}f(q)\right)\ud q\nonumber\\
&=\mhh (p)\partial_{p_i}\int_{{\mathbb R}^3} \Phi^{ij} (p,q) \mh (p)\mathbf{M}(q)\left(\frac{-u_0\hat{p}_j+u_j}{2T_0}f(p)-\partial_{q_j}f(q)\right)\ud q\nonumber\\
&=\left(\partial_{p_i}-\frac{u_0\hat{p}_i-u_i}{2T_0}\right)\int_{{\mathbb R}^3} \Phi^{ij} (p,q) \mh (p)\mh (q)\left(\frac{-u_0\hat{q}_j+u_j}{2T_0}f(q)-\partial_{q_j}f(q)\right)\ud q .\nonumber
\end{align}
For \eqref{Gamma}, we use  \eqref{Phi} again to have
\begin{align}
\Gamma (f,g)&=\mhh (p)\partial_{p_i}\int_{{\mathbb R}^3} \Phi^{ij} (p,q) \left(\partial_{p_j}\big[\mh f\big](p)\big[\mh g\big](q)-\big[\mh f\big](p)
\partial_{q_j}\big[\mh f\big](q)\big]\right)\ud q\\
&=\mhh (p)\partial_{p_i}\int_{{\mathbb R}^3} \Phi^{ij} (p,q)\mh (p)\mh (q)\Big(
\partial_{p_j}f(p)g(q)-f(p)\partial_{q_j}g(q)\Big)\ud q\nonumber\\
&\quad+\mhh (p)\partial_{p_i}\int_{{\mathbb R}^3} \Phi^{ij} (p,q) \mh (p)\mh (q)f(p)g(q)\left(-\frac{u_0\hat{p}_j-u_j}{2T_0}+\frac{u_0\hat{q}_j-u_j}{2T_0}\right)\ud q\nonumber\\
&=\mhh (p)\partial_{p_i}\int_{{\mathbb R}^3} \Phi^{ij} (p,q)\mh (p)\mh (q)\Big(
\partial_{p_j}f(p)g(q)-f(p)\partial_{q_j}g(q)\Big)\ud q\nonumber\\
&=\left(\partial_{p_i}-\frac{u_0\hat{p}_i-u_i}{2T_0}\right)\int_{{\mathbb R}^3} \Phi^{ij} (p,q) \mh (q)\Big(\partial_{p_j}f(p)g(q)-f(p)\partial_{q_j}g(q)\Big)\ud q .\nonumber
\end{align}
\end{proof}

\begin{remark}
From Lemma \ref{ss 07}, we know that when taking $x_i$ derivatives on $\li$ and $\Gamma$, there will be no $p$ popped out from $\mh$ and $\mhh$.
\end{remark}

\begin{lemma}\label{ss 01}
The linearized collision operator $\li$ is self-adjoint in $L^2$. It satisfies
\begin{align}
    \brv{\li[f],f}\gs \abss{(\ik-\pk)[f]}^2.
\end{align}
\end{lemma}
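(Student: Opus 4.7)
The plan has three steps. First, I would verify that $\li$ is self-adjoint on $L^2_p$ by computing $\brv{\li[f], g}$ directly: substituting the collision formula for $\c$, integrating by parts once in $p$ to move the outer divergence, then relabeling $p \leftrightarrow q$ in one copy of the resulting double integral. The symmetry $\Phi^{ij}(p,q) = \Phi^{ji}(q,p)$ of the collision kernel renders the bilinear form manifestly symmetric in $f$ and $g$.

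Second, I would record the standard Dirichlet-form representation. Symmetrizing via $p \leftrightarrow q$ produces
\begin{align*}
\brv{\li[f], f} = \tfrac{1}{2}\iint_{\r^3\times\r^3}\Phi^{ij}(p,q)\,\m(p)\m(q)\,\Psi_i(p,q)\,\Psi_j(p,q)\,\ud p\,\ud q,
\end{align*}
where $\Psi_i(p,q)$ is the appropriate ``$p$-$q$ gradient difference'' acting on $\mhh f$. Non-negativity of the matrix $\Phi^{ij}$ (stated in the paragraph following \eqref{Phi}) makes this form non-negative, and inspecting when $\Psi_i \equiv 0$ recovers the kernel $\mathcal{N} = \mathrm{span}\{\mh, p_i\mh, p^0\mh\}$, so $\li[\pk f] = 0$. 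Together with self-adjointness this gives $\brv{\li[f], f} = \brv{\li[(\ik-\pk)[f]], (\ik-\pk)[f]}$, reducing matters to coercivity on $\mathcal{N}^\perp$.

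Third, for the quantitative coercivity I would split $\li = -\aa - \kk$ as in Lemma~\ref{ss 07}. Integration by parts on the explicit formula \eqref{mathA} produces
\begin{align*}
-\brv{\aa[g], g} = \int_{\r^3}\sigma^{ij}\partial_{p_i}g\,\partial_{p_j}g\,\ud p + \int_{\r^3}\frac{\sigma^{ij}}{4T_0^2}(u_0\hat p_i - u_i)(u_0\hat p_j - u_j)|g|^2\,\ud p + R[g],
\end{align*}
where $R[g]$ is a lower-order remainder. Via the spectral properties in \eqref{sim} and the smallness of $u$, the leading two terms dominate $\abss{g}^2$ modulo a compact-type perturbation. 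The operator $\kk$ given by \eqref{mathK} is compact on $L^2_p$ thanks to the double Maxwellian weight $\mh(p)\mh(q)$ in its kernel. A classical contradiction argument then upgrades non-negativity to strict coercivity on $\mathcal{N}^\perp$: if the claimed inequality failed, a normalized sequence $g_n \in \mathcal{N}^\perp$ with $\abss{g_n}=1$ but $\brv{\li[g_n], g_n}\to 0$ would possess a subsequential limit $g_\infty \in \mathcal{N}\cap\mathcal{N}^\perp = \{0\}$, contradicting $\abss{g_n}=1$.

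The main obstacle will be step three, and specifically ensuring that the coercivity constant is uniform in $(t,x)$: since $\li$ depends on the hydrodynamic quantities $(n_0, u, T_0)(t,x)$ through $\m$, the constants hidden in $\gtrsim$ must not degenerate on the slab $[0,t_0]\times\r^3$. This is handled by using the uniform upper bound on $T_0$ from \eqref{tt 01'''} together with the closeness/smallness of $(n_0, u, T_0)$ supplied by the hypotheses of Theorem~\ref{result 2}, so every spectral constant remains bounded away from zero. An analogous coercivity estimate in the relativistic Landau setting (for a global Maxwellian) is carried out in \cite{Strain.Guo2004}, and the adaptation to the local Maxwellian here amounts to parameter-tracking.
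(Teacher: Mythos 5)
Your plan is correct in outline and it would work, but it takes a noticeably longer route than the paper. The paper's own proof is essentially two lines: it invokes \cite[Lemma 6]{Strain.Guo2004} to obtain coercivity of the Dirichlet form in a modified norm $|\cdot|_{\tilde\sigma}$ whose weight term carries the factors $(u_0\hat p_i - u_i)(u_0\hat p_j - u_j)$ coming from the local Maxwellian, and then the only new work is to verify $|\cdot|_{\tilde\sigma}\simeq |\cdot|_\sigma$. That equivalence follows from the elementary inequality $(A-B)^2\geq\tfrac12 A^2-B^2$ applied to $u_0\hat p_i - u_i$, the smallness of $\|u\|_{L^\infty_{t,x}}$, and the equivalence \eqref{sim}. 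So the paper localizes the new difficulty precisely: in the relativistic Landau setting with a \emph{local} Maxwellian, the natural Dirichlet form produces $(u_0\hat p_i - u_i)$ rather than $\hat p_i$, and the entire content of the lemma beyond citing Strain--Guo is controlling that perturbation.

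Your proposal instead re-derives the full Strain--Guo coercivity: self-adjointness and the Dirichlet-form representation (your steps 1--2), then the $\aa$/$\kk$ split with a compactness--contradiction argument (step 3). This is a legitimate approach and is exactly what the cited lemma establishes, so you are trading brevity for self-containment. One caution about step 3: the contradiction argument is more delicate on $\R^3_p$ than you indicate. A bound $\abss{g_n}=1$ gives local $H^1$-compactness but no tightness at large $|p|$, so extracting a subsequence converging in $L^2_p$ (needed for $\brv{\kk g_n,g_n}\to\brv{\kk g_\infty,g_\infty}$ and for $g_\infty\neq 0$) requires using the momentum decay built into the $\sigma$-norm together with the structure of $\kk$; Strain--Guo carry this out, and since you defer to them for the global-Maxwellian case this gap is acceptable, but a fully self-contained write-up would need to fill it. Also note your passing remark that ``the coercivity constant must be uniform in $(t,x)$'' is precisely where the norm-equivalence $\tilde\sigma\simeq\sigma$ under smallness of $u$ enters---the paper's proof makes that the visible content of the lemma rather than a parameter-tracking footnote.
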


\begin{proof}
Using Lemma \ref{ss 07}, compared with \cite[Lemma 6]{Strain.Guo2004}, for any large constant $R$, it holds that
\begin{align}\label{ppt0}
\brv{\li[f],f}&\gtrsim  |{\{\bf I-P\}}[f]|^2_{\tilde{\sigma}},
\end{align}
where the norm $|\cdot|_{\tilde{\sigma}}$ is defined as
\begin{align}\label{norm-tilde}
 |f|^2_{\tilde{\sigma}}=&\sum_{i,j=1}^3\int_{{\mathbb R}^3}\sigma^{ij}\partial_{p_i}f\partial_{p_j}f\,\ud p
 +\sum_{i,j=1}^3\frac{1}{4T_0^2}\int_{{\mathbb R}^3}\sigma^{ij}\big(u_0\hat{p}_i-u_i\big)\big(u_0\hat{p}_j-u_j\big)|f|^2\,\ud p.
\end{align}
Now we show the equivalence of the norm $|\cdot|_{\sigma}$ and $|\cdot|_{\tilde{\sigma}}$ under the smallness assumption of $u$. By the simple inequality
\begin{align}
    (A-B)^2\geq \frac12 A^2-B^2,
\end{align}
we have
\begin{align}\label{sigma0}
&\sum_{i,j=1}^3 \sigma^{ij}\big(u_0\hat{p}_i-u_i\big)\big(u_0\hat{p}_j-u_j\big)\geq \sum_{i,j=1}^3\frac12u_0^2\sigma^{ij}\hat{p}_i\hat{p}_j-\sum_{i,j=1}^3\sigma^{ij}u_iu_j.
  \end{align}
Combining \eqref{norm-tilde} and \eqref{sigma0}, we use the the smallness assumption of $u$ to obtain
\begin{align}
|f|^2_{\tilde{\sigma}}&\geq\sum_{i,j=1}^3\int_{{\mathbb R}^3}\sigma^{ij}\partial_{p_i}f\partial_{p_j}f\,dp
 +\sum_{i,j=1}^3\frac{u_0^2}{8T_0^2}\int_{{\mathbb R}^3}\sigma^{ij}\hat{p}_i\hat{p}_j|f|^2\,\ud p-\sum_{i,j=1}^3\frac{1}{4T_0^2}\int_{{\mathbb R}^3}\sigma^{ij}u_iu_j|f|^2\,\ud p\\
 &\geq \frac18|f|_{\sigma}^2-\frac{C}{T_0^2}\|u\|_{L^{\infty}_{t,x}}^2 \tbs{f}^2
 \gtrsim \tbs{\nabla_pf}^2+\tbs{f}^2\gtrsim|f|_{\sigma}^2.\nonumber
\end{align}
\end{proof}

\begin{lemma}\label{ss 02}
The nonlinear collision operator $\Gamma$ satisfies
\begin{align}
    \babs{\brv{\Gamma[f,g]+\Gamma[g,f],h}}\ls \big(\tbs{f}\abss{g}+\tbs{g}\abss{f}\big)\abss{(\ik-\pk)[h]}.
\end{align}
\end{lemma}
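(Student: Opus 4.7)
The plan is to mirror the standard argument used by Strain--Guo for the relativistic Landau $\Gamma$ operator, in three stages: (i) a conservation-law reduction that lets us replace $h$ by $(\ik-\pk)[h]$, (ii) an integration by parts in $p$ using the form of $\Gamma$ from Lemma~\ref{ss 07}, and (iii) two successive Cauchy--Schwarz applications that exploit the positive semidefiniteness of $\Phi^{ij}(p,q)$ and the Gaussian decay of $\mh(q)$.

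\emph{Step 1 (Reduction to $\mathcal{N}^{\perp}$).} Testing the conservation identity stated just after \eqref{coll} against $\mh$, $p_i\mh$, $p^0\mh$ gives $\brv{\Gamma[f,g],\psi}=\int\psi\,\mhh\,\c[\mh f,\mh g]\,\ud p = 0$ for every $\psi\in\mathcal{N}$, so $\Gamma[f,g]$ and $\Gamma[g,f]$ both lie in $\mathcal{N}^{\perp}$. Hence
\[
\brv{\Gamma[f,g]+\Gamma[g,f],h} = \brv{\Gamma[f,g]+\Gamma[g,f],(\ik-\pk)[h]},
\]
and I can work with $\bar h := (\ik-\pk)[h]$ throughout, whose $\sigma$-norm is precisely the right-hand factor in the claim.

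\emph{Step 2 (Integration by parts).} Using \eqref{Gamma}, the outer operator $\partial_{p_i}-\tfrac{u_0\hat p_i-u_i}{2T_0}$ on $\Gamma[f,g]$ can be transferred onto $\bar h$ via its formal adjoint $-\partial_{p_i}-\tfrac{u_0\hat p_i-u_i}{2T_0}$, giving
\[
\brv{\Gamma[f,g],\bar h} = -\iint \Phi^{ij}(p,q)\,\mh(q)\,\bigl(\partial_{p_j}f(p)\,g(q) - f(p)\,\partial_{q_j}g(q)\bigr)\,\Bigl(\partial_{p_i}\bar h(p) + \tfrac{u_0\hat p_i-u_i}{2T_0}\bar h(p)\Bigr)\,\ud q\,\ud p.
\]
This expands into four terms, each of schematic form $\iint \Phi^{ij}\mh(q)\,A_j(p,q)\,B_i(p)\,\ud q\,\ud p$ with $A_j\in\{\partial_{p_j}f\cdot g,\,f\cdot\partial_{q_j}g\}$ and $B_i\in\{\partial_{p_i}\bar h,\,\tfrac{u_0\hat p_i-u_i}{2T_0}\bar h\}$.

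\emph{Step 3 (Double Cauchy--Schwarz).} For each of the four terms, I would apply the matrix Cauchy--Schwarz $|\Phi^{ij}\xi_i\eta_j|\leq(\Phi^{ij}\xi_i\xi_j)^{1/2}(\Phi^{ij}\eta_i\eta_j)^{1/2}$ (valid because $\Phi^{ij}$ is positive semidefinite), separating a $p$-factor from a $q$-factor. The $q$-integral, weighted by the Gaussian $\mh(q)$, absorbs any polynomial in $q$ and collapses to $\tbs{g}$ or $\tbs{\nabla_q g}$ (the latter controlled by $\abss{g}$ via \eqref{sim}). The remaining $p$-integral, weighted by $\sigma^{ij}(p)=\int\Phi^{ij}\m(q)\,\ud q$, repackages $\partial_p f$ into $\abss{f}$, while the zeroth-order contributions land in the $\tfrac{1}{T_0^2}\int\sigma^{ij}\tfrac{p_i p_j}{(p^0)^2}|\cdot|^2$ piece of the $\sigma$-norm (modulo the smallness of $u$). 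Symmetrizing under $f\leftrightarrow g$ produces the desired $\tbs{f}\abss{g}+\tbs{g}\abss{f}$ structure, multiplied by $\abss{\bar h}$.

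The main obstacle is the bookkeeping in Step 3: for each of the four expanded terms one must arrange the Cauchy--Schwarz so that one factor emerges as a \emph{plain} $L^2_p$-norm and the other as a $|\cdot|_\sigma$-norm, so the output is the asymmetric product $\tbs{\cdot}\abss{\cdot}$ rather than a symmetric $\abss{\cdot}\abss{\cdot}$ (which would be too strong to claim) or a weighted $L^2$ with $\sigma$-weight in the wrong place. The analogous estimate for the relativistic $\Gamma$ in \cite{Strain.Guo2004} provides exactly the Cauchy--Schwarz inequality for kernels of $\Phi^{ij}$-type needed here; once that is invoked the rest is mechanical.
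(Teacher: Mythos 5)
Your proposal is correct and follows essentially the same route the paper takes: both reduce to the estimate of Strain--Guo \cite[Theorem~4]{Strain.Guo2004}, with the only new ingredient being the handling of the extra factor $-\frac{u_0\hat p_i - u_i}{2T_0}$ coming from the local Maxwellian in \eqref{Gamma}, which you correctly identify and control via the smallness of $u$ (as the paper also notes). Your Steps~1--3 simply expand what the paper leaves implicit in the citation, and your final paragraph honestly defers the genuinely technical part — the kernel Cauchy--Schwarz arrangement that produces the asymmetric $\tbs{\cdot}\,\abss{\cdot}$ product — to the same reference the paper invokes.
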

\begin{proof}
The proof is similar to \cite[Theorem 4]{Strain.Guo2004}, so we omit it here. Compared with \cite[Theorem 4]{Strain.Guo2004}, we need the smallness of $\nm{u}_{L^{\infty}_{t,x}}$ to handle the term
$-\frac{u_0\hat{p}_i-u_i}{2T_0}$ in \eqref{Gamma}.
\end{proof}

\begin{lemma}\label{ss 04}
For any $\eta>0$, there exists $C_{\eta}>0$ such that
\begin{align}
    \brv{w_{\ell}^2\li[f],f}\gs \abss{w_{\ell}f}^2-\abss{f}^2.
\end{align}
\end{lemma}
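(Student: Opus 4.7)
The strategy mirrors the proof of Lemma~\ref{ss 01} but carries along the weight: I split $\li=-\aa-\kk$ via Lemma~\ref{ss 07}, bound $-\brv{w_\ell^2\aa[f],f}$ from below by transferring the weight via integration by parts, and control $-\brv{w_\ell^2\kk[f],f}$ using the pointwise bound $w_\ell\mh\ls\ue^{-c_0p^0}$ from \eqref{tt 01'}. Throughout I set $g:=w_\ell f$, so the goal is to recover $\abss{g}^2$ modulo an additive $\abss{f}^2$.

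For the $\aa$ piece, integrating by parts in the principal term of \eqref{mathA} yields
\begin{align*}
-\brv{w_\ell^2\p_{p_i}(\sigma^{ij}\p_{p_j}f),f} = \int\sigma^{ij}w_\ell^2\p_{p_i}f\,\p_{p_j}f\,\ud p + 2\int\sigma^{ij}w_\ell(\p_{p_i}w_\ell)f\,\p_{p_j}f\,\ud p.
\end{align*}
Substituting $w_\ell\p_{p_j}f=\p_{p_j}g-w_\ell^{-1}(\p_{p_j}w_\ell)g$ and using the symmetry $\sigma^{ij}=\sigma^{ji}$, the right-hand side collapses to
\begin{align*}
\int\sigma^{ij}\p_{p_i}g\,\p_{p_j}g\,\ud p\;-\;\int\sigma^{ij}\frac{(\p_{p_i}w_\ell)(\p_{p_j}w_\ell)}{w_\ell^2}g^2\,\ud p.
\end{align*}
The first integral is exactly the $\nabla_p$ piece of $\abss{g}^2$. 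From \eqref{tt 01} a direct computation gives $w_\ell^{-1}\p_{p_i}w_\ell=\frac{p_i}{p^0}\bigl(\frac{2(N_0-\ell)}{p^0}+\frac{1}{5T\ln(\ue+t)}\bigr)$, so the correction integrand is pointwise bounded by a small multiple of $\sigma^{ij}\frac{p_ip_j}{(p^0)^2}g^2$, and thanks to \eqref{tt 01'''} ($T\geq\sup T_0$) it is a strict fraction of the $T_0^{-2}$ piece of $\abss{g}^2$. The remaining zeroth-order terms of \eqref{mathA} contribute $\int\frac{w_\ell^2\sigma^{ij}}{4T_0^2}(u_0\hat p_i-u_i)(u_0\hat p_j-u_j)f^2\,\ud p$ plus a harmless remainder that, under the smallness of $u$, is handled exactly as in Lemma~\ref{ss 01} and recovers the remaining $T_0^{-2}$ contribution of $\abss{g}^2$. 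All told, $-\brv{w_\ell^2\aa[f],f}\gs\abss{g}^2-\abss{f}^2$.

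For the compact part I would rewrite $-\brv{w_\ell^2\kk[f],f}=-\brv{w_\ell\kk[f],w_\ell f}$ and distribute one factor of $w_\ell$ onto the kernel in \eqref{mathK}. Because that kernel contains $\mh(p)\mh(q)$, estimate \eqref{tt 01'} converts $w_\ell\mh$ into $\ue^{-c_0p^0}$ decay in both variables, so one is effectively back to an unweighted compact operator. A standard Cauchy--Schwarz and truncation argument on $\{p^0\leq R\}\cup\{p^0>R\}$ then gives $|\brv{w_\ell^2\kk[f],f}|\leq\eta\abss{w_\ell f}^2+C_\eta\abss{f}^2$ for arbitrary $\eta>0$, in direct analogy with \cite[Lemma~6]{Strain.Guo2004}; choosing $\eta$ small and absorbing into the $\aa$-coercivity closes the argument. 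The main delicate step is the first: the specific choices $1/(5T\ln(\ue+t))$ in the exponent of \eqref{tt 01} and $T\geq\sup T_0$ in \eqref{tt 01'''} are precisely what forces the quadratic weight correction $\sigma^{ij}(\p_{p_i}w_\ell)(\p_{p_j}w_\ell)/w_\ell^2$ to be a strict fraction (essentially $1/25$) of the coercive piece of $\abss{g}^2$, leaving positive room for the lower bound to survive.
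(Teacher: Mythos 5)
Your proposal follows essentially the same route as the paper's proof: split $\li=-\aa-\kk$ via Lemma~\ref{ss 07}, integrate by parts in the principal part of $\aa$, absorb $w_\ell\mh\ls\ue^{-c_0p^0}$ (from \eqref{tt 01'}) to tame the weighted compact piece $\kk$, and exploit $T\geq\sup T_0$ to ensure the weight-derivative correction is dominated by the coercive zeroth-order piece of $\abss{w_\ell f}^2$. The main differences are cosmetic: you complete the square so that the principal term becomes exactly $\int\sigma^{ij}\p_{p_i}g\,\p_{p_j}g$ with $g=w_\ell f$ (the paper keeps $\int w_\ell^2\sigma^{ij}\p_if\p_jf$ and bounds the cross term $\brv{\sigma^{ij}\p_jf,\p_i(w_\ell^2)f}$ directly), and for $\kk$ you distribute one $w_\ell$ onto the kernel and obtain $\eta\abss{w_\ell f}^2+C_\eta\abss{f}^2$, whereas the paper absorbs both and lands on the sharper $\ls\abss{f}^2$.

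One small imprecision worth repairing: the correction factor $\bigl(\tfrac{2(N_0-\ell)}{p^0}+\tfrac{1}{5T\ln(\ue+t)}\bigr)^2$ is \emph{not} pointwise small for $p^0$ near its minimum (there $\tfrac{2(N_0-\ell)}{p^0}\geq 2$). You need the same $\{p^0\leq R\}\cup\{p^0>R\}$ truncation you invoke for $\kk$: on $p^0\leq R$ the integrand is absorbed into $C_R\abss{f}^2$ (since $w_\ell$ is bounded there), and only on $p^0>R$ does the surviving $\tfrac{1}{25T^2\ln^2(\ue+t)}\leq\tfrac{1}{25T_0^2}$ piece become a strict fraction of the $T_0^{-2}$ coercivity in $\abss{g}^2$. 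With that adjustment your argument closes and matches the paper's.
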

\begin{proof}
We split $\li$ as $-\aa$ and $-\kk$ and use the expression of $\aa$ in \eqref{mathA} to integrate by parts w.r.t. $p$ to have
\begin{align}\label{wL}
\brv{\li[f],w_{\ell}^2 f}=&-\brv{\partial_{i}\big(\sigma^{ij}\partial_jf\big), w_{\ell}^2f}
+\brv{\frac{\sigma^{ij}}{4T_0^2}\big(u_0\hat{p}_i-u_{i}\big)\big(u_0\hat{p}_j-u_{j}\big)f, w_{\ell}^2f}\\
&-\frac{1}{2T_0}\brv{\partial_{i}\Big(\sigma^{ij}\big(u_0\hat{p}_j-u_{j}\big)\Big)f, w_{\ell}^2f}-\brv{ \kk[f], w_{\ell}^2f}\nonumber\\
=&\brv{\sigma^{ij}\partial_jf, w_{\ell}^2\partial_{i}f}
+\brv{\frac{\sigma^{ij}}{4T_0^2}\big(u_0\hat{p}_i-u_{i}\big)\big(u_0\hat{p}_j-u_{j}\big)f, w_{\ell}^2f}\nonumber\\
&-\frac{1}{2T_0}\brv{\partial_{i}\Big(\sigma^{ij}\big(u_0\hat{p}_j-u_{j}\big)\Big)f, w_{\ell}^2f}-\brv{\sigma^{ij}\partial_jf, \partial_{i}(w_{\ell}^2)f}-\brv{\kk[f],w_{\ell}^2 f}.\nonumber
\end{align}
Now we estimate the terms in the R.H.S. of the second equal sign in \eqref{wL}.
From \cite[Lemma 5]{Strain.Guo2004}, we know
\begin{equation}
\big|\nabla_{p}^k\sigma^{ij}(p)\big|\lesssim (p^0)^{-k},
\end{equation}
for any integer $k\geq0$. Then for any large constant $R$, we have
\begin{align}
&\,\frac{1}{2T_0}\abs{\brv{\partial_{i}\Big(\sigma^{ij}\big(u_0\hat{p}_j-u_{j}\big)\Big)f, w_{\ell}^2f}}
\lesssim \frac{1}{2T_0} \int_{{\mathbb R}^3}\frac{w_{\ell}^2}{p^0}|f|^2\,\ud p\\
=&\,\frac{1}{2T_0} \int_{p^0\leq R}\frac{w_{\ell}^2}{p^0}|f|^2\,\ud p+\frac{1}{2T_0} \int_{p^0> R}\frac{w_{\ell}^2}{p^0}|f|^2\,\ud p
\lesssim \frac{C_R}{T_0}\tbs{f}^2+\frac{1}{RT_0}\tbs{w_{\ell}f}^2.\no
\end{align}
Noting that
\begin{align}\label{diff}
\partial_{i}(w_{\ell}^2)=\frac{4(N_0-\ell)}{p^0}\hat{p}_iw_{\ell}^2+\frac{2\hat{p}_i}{5\ln(\ue+t)}w_{\ell}^2,
\end{align}
we use Cauchy's inequality to have
\begin{align}
&\;\abs{\brv{\sigma^{ij}\partial_jf, \partial_{i}[w_{\ell}^2]f}}\\
\leq&\;  C\int_{{\mathbb R}^3}\frac{w_{\ell}^2}{p^0}\Big(|\nabla_pf|^2+|f|^2\Big)\,\ud p+\abs{\int_{{\mathbb R}^3}\frac{2w_{\ell}^2}{5T\ln(\ue+t)}\Big(\sigma^{ij}\hat{p}_i\hat{p}_j|f|^2\Big)^{\frac{1}{2}}\Big(\sigma^{ij}\partial_if\partial_jf\Big)^{\frac{1}{2}}\,\ud p}\no\\
\leq &\;C_R|f|^2_{\sigma}+\frac{C}{R}|w_{\ell}f|^2_{\sigma}+\frac{1}{2}\int_{{\mathbb R}^3}w_{\ell}^2\sigma^{ij}\partial_if\partial_jf\,\ud p+\frac{2}{25T^2\ln^2(\ue+t)}\int_{{\mathbb R}^3}w_{\ell}^2\sigma^{ij}\hat{p}_i\hat{p}_j|f|^2\,\ud p.\no
\end{align}
For the term $\brv{\kk[f],w_{\ell}^2 f}$, we integrate with respect to $p$ and use \eqref{mathK} and \eqref{diff} to have
\begin{align}
\abs{\brv{\kk[f],w_{\ell}^2 f}}\lesssim &\frac{1}{T_0^2}\sum_{k_1,k_2\leq1}\int_{{\mathbb R}^3} |\Phi (p,q)| \mh(p)\mh(q)w_{\ell}^2|\nabla_p^{k_1}f(p)||\nabla_q^{k_2}f(q)|\,\ud q\\
\lesssim&\, \Big(\int_{{\mathbb R}^3}|\Phi (p,q)|^2\frac{1}{T_0^4}\mathbf{M}(q)\,\ud q\Big)^{\frac{1}{2}}  \Big(\int_{{\mathbb R}^3}\sum_{k_2\leq1}|\nabla_q^{k_2}f(q)|^2\,\ud q\Big)^{\frac{1}{2}}\no \\
&\times \Big(\int_{{\mathbb R}^3}w_{\ell}^4\mathbf{M}(p)\,\ud p\Big)^{\frac{1}{2}} \Big(\int_{{\mathbb R}^3}\sum_{k_1\leq1}|\nabla_p^{k_1}f(p)|^2\,\ud q\Big)^{\frac{1}{2}} \no\\
\lesssim &\,\Big(\int_{{\mathbb R}^3}|\Phi (p,q)|^2\frac{1}{T_0^4}\mathbf{M}(q)\,\ud q\Big)^{\frac{1}{2}}\abss{f}^2.\no
\end{align}
Here we used \eqref{tt 01'} to deduce that
\begin{align}
    \int_{{\mathbb R}^3}w_{\ell}^4\mathbf{M}(p)\,\ud p\lesssim 1.
\end{align}
Note that $\frac{1}{T_0^4}\leq\frac{(q^0)^4}{T_0^4}$ can be absorbed by $\mathbf{M}(q)$. Then, as in \cite{Strain.Guo2004}, we can split ${\mathbb R}^3_q$ into two regions $A$ and $B$. From \cite[Lemma 2]{Strain.Guo2004}, we have
\begin{align}\label{PhiM}
\int_{{\mathbb R}^3}|\Phi (p,q)|^2\frac{1}{T_0^4}\mathbf{M}(q)\,\ud q&=\int_{q\in A}|\Phi (p,q)|^2\frac{1}{T_0^4}\mathbf{M}(q)\,\ud q+\int_{q\in B}|\Phi (p,q)|^2\frac{1}{T_0^4}\mathbf{M}(q)\,\ud q\\
&\lesssim \int_{{\mathbb R}^3}\big(1+|p-q|^{-2}\big)\mathbf{M}^{1/2}(q)\,\ud q\lesssim 1.\nonumber
\end{align}
Then we can further bound $\abs{\brv{\kk[f],w_{\ell}^2 f}}$ by $|f|_{\sigma}^2$.

Collecting the above estimates in \eqref{wL}, we use \eqref{tt 01'''} and \eqref{sigma0} to get
\begin{align}
&\;\brv{\li[f],w_{\ell}^2 f}\\
\geq&\;\frac12\brv{w_{\ell}^2\sigma^{ij}\partial_jf, \partial_{i}f}-C_R|f|^2_{\sigma}-C\left(\frac{1}{R}+\nm{u}_{L^{\infty}_{t,x}}^2\right)|w_{\ell}f|^2_{\sigma}+\left(\frac{u_0^2}{8T_0^2}-\frac{2}{25T^2\ln^2(\ue+t)}\right)
\int_{{\mathbb R}^3}w_{\ell}^2\sigma^{ij}\hat{p}_i\hat{p}_j|f|^2\,\ud p\no\\
\geq&\;\frac12\brv{w_{\ell}^2\sigma^{ij}\partial_jf, \partial_{i}f}-C_R|f|^2_{\sigma}-C\left(\frac{1}{R}+\nm{u}_{L^{\infty}_{t,x}}^2\right)|w_{\ell}f|^2_{\sigma}+\frac{9u_0^2}{200T_0^2}
\int_{{\mathbb R}^3}w_{\ell}^2\sigma^{ij}\hat{p}_i\hat{p}_j|f|^2\,\ud p\no\\
\gtrsim&\; |w_{\ell}f|^2_{\sigma} -C_R|f|^2_{\sigma}\no \nonumber
\end{align}
by choosing $R$ large enough.
\end{proof}

\begin{lemma}\label{ss 05}
We have
\begin{align}
    \brv{\Gamma[f,g],w_{\ell}^2h}\ls \Big(\tbs{w_{\ell}f}\abss{g}+\tbs{g}\abss{w_{\ell}f}\Big)\abss{w_{\ell}h}.
\end{align}
\end{lemma}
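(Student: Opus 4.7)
The plan is to adapt the proof of Lemma \ref{ss 02} (cf.\ \cite[Theorem 4]{Strain.Guo2004}), carefully tracking the weight $w_{\ell}^{2}$ throughout. Starting from the representation \eqref{Gamma} of $\Gamma[f,g]$, I first integrate by parts in $p$ to transfer the outer differential operator $\partial_{p_i}-\tfrac{u_0\hat{p}_i-u_i}{2T_0}$ onto the test function $w_{\ell}^{2}h$, yielding
\begin{align*}
\brv{\Gamma[f,g],w_{\ell}^{2}h}=-\iint \Phi^{ij}(p,q)\,\mh(q)\,\big[\partial_{p_j}f(p)\,g(q)-f(p)\,\partial_{q_j}g(q)\big]\,\Big(\partial_{p_i}(w_{\ell}^{2}h)+\tfrac{u_0\hat{p}_i-u_i}{2T_0}w_{\ell}^{2}h\Big)\,\ud q\,\ud p.
\end{align*}
Expanding via the product rule, $\partial_{p_i}(w_{\ell}^{2}h)=w_{\ell}^{2}\partial_{p_i}h+h\,\partial_{p_i}(w_{\ell}^{2})$, where by \eqref{diff} the weight-derivative satisfies $\abs{\partial_{p_i}(w_{\ell}^{2})}\lesssim \hat{p}_i\big(\tfrac{1}{p^0}+\tfrac{1}{\ln(\ue+t)}\big)w_{\ell}^{2}$.

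The crux is to split the weight $w_{\ell}^{2}$ so that one factor of $w_{\ell}$ accompanies $f$ and one accompanies $h$, leaving $g$ unweighted (consistent with the target estimate). For example, rewrite $w_{\ell}^{2}\partial_{p_j}f(p)\,\partial_{p_i}h(p)=(w_{\ell}\partial_{p_j}f)(w_{\ell}\partial_{p_i}h)$, and handle the lower-order pieces analogously. Next, I apply Cauchy-Schwarz in $q$ against the $\mh(q)$ factor, using the bound $\int\abs{\Phi^{ij}(p,q)}^{2}\mh(q)\,\ud q\lesssim 1$ established in the proof of Lemma \ref{ss 04} (cf.\ \eqref{PhiM}), to produce pointwise-in-$p$ factors of $\tbs{g}$ (from the $\partial_{p_j}f(p)g(q)$ term) or $\abss{g}$ (from the $f(p)\partial_{q_j}g(q)$ term). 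In $p$, Cauchy-Schwarz for the positive semi-definite matrix $\sigma^{ij}(p)=\int\Phi^{ij}(p,q)\mh(q)\,\ud q$ then assembles the derivative-type contributions into $\abss{w_{\ell}f}\,\abss{w_{\ell}h}$ and the zeroth-order contributions into $\tbs{w_{\ell}f}\,\abss{w_{\ell}h}$, the latter using the $\frac{1}{T_0^{2}}\tbs{f}^{2}$ piece of the $\sigma$-norm in \eqref{sim}. Collecting these yields the claimed bound.

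The main obstacle is the weight bookkeeping: keeping $g$ unweighted while distributing $w_{\ell}$ evenly onto $f$ and $h$ generates commutator terms of the form $f\,\partial_{p_j}w_{\ell}$ and $h\,\partial_{p_i}w_{\ell}$. These are controlled via the bound $\abs{\partial_{p_i}w_{\ell}/w_{\ell}}\lesssim\hat{p}_i\big(\tfrac{1}{p^0}+\tfrac{1}{\ln(\ue+t)}\big)$: the $1/p^0$ component is dominated by the $\sigma$-norm (which has $\abss{f}\gtrsim\tfrac{1}{T_0}\tbs{f}+\tbs{\nabla_p f}$ by \eqref{sim}), and the $1/\ln(\ue+t)$ component is small uniformly in $t\geq 0$. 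The $u$-dependent factor $\tfrac{u_0\hat{p}_i-u_i}{2T_0}$ arising after integration by parts is treated exactly as in Lemma \ref{ss 02}, invoking the smallness of $u$ from the Euler assumption \eqref{semp 3'}.
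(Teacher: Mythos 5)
Your proposal is correct and follows essentially the same approach as the paper: start from the representation \eqref{Gamma}, integrate by parts in $p$, use \eqref{diff} to control $\partial_{p_i}(w_\ell^2)$, apply Cauchy--Schwarz in $q$ with the bound \eqref{PhiM}, and then assemble the weights on $f$ and $h$ (leaving $g$ unweighted) via the $\sigma$-norm equivalence \eqref{sim}. You spell out the weight-commutator bookkeeping and the treatment of the $u$-dependent factor in more detail than the paper does, but the underlying decomposition and every key lemma invoked coincide with the paper's argument.
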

\begin{proof}
From \eqref{Gamma}, we integrate by parts with respect to $p$ and use \eqref{diff} to get
\begin{align}\label{WGamma}
&\abs{\brv{\Gamma[f,g],w_{\ell}^2 h}}\\
=&\abs{\brv{\int_{{\mathbb R}^3} \Phi^{ij} (p,q) \mh(q)\Big(\partial_{p_j}f(p)g(q)-f(p)\partial_{q_j}g(q)\Big)\ud q,\left(\partial_{p_i}-\frac{u_0\hat{p}_i-u_i}{2T_0}\right)(w_{\ell}^2 h)}}\nonumber\\
\lesssim& \iint_{\r^3\times\r^3} \Phi^{ij} (p,q) \mh(q)\abs{\partial_{p_j}f(p)g(q)-f(p)\partial_{q_j}g(q)}w_{\ell}^2 \Big(|h|+|\partial_{p_i}h|\Big)\,\ud p\ud q.\nonumber
\end{align}
By H\"{o}lder's inequality, we can use \eqref{PhiM} to further estimate \eqref{WGamma} as
\begin{align}
\abs{\brv{\Gamma[f,g],w_{\ell}^2 h}}
&\lesssim | \Phi(p,q) \mh|_{L^2_q}\Big(|w_{\ell}\partial_{p_j}f|_2|g|_2+|w_{\ell}f|_2|\partial_{q_j}g|_2\Big) |w_{\ell}h|_{\sigma}\\
&\lesssim \Big(|w_{\ell}f|_{\sigma}\tbs{g}+\tbs{w_{\ell}f}|g|_{\sigma}\Big)|w_{\ell}h|_{\sigma}.\nonumber
\end{align}
\end{proof}

\bigskip

\section{No-Weight Energy Estimates} \label{Sec:no-weight-energy}

In this section, we derive the no-weight energy estimates.

\subsection{Basic Energy Estimate}

\begin{proposition}\label{basic 0}
For the remainder $\fe$, it holds that
\begin{align}
    &\,\frac{\ud}{\ud t}\tnm{\fe}^2
    +\e^{-1}\delta\bnms{(\ik-\pk)[\fe]}^2\\
    \ls&\,\big(\e^{\frac{1}{2}}+\zz\big)\tnm{\fe}^2+\zz\e\btnmw{(\ik-\pk)[\fe]}^2+\e^{2k+3}.\no
\end{align}
Thus, we know
\begin{align}\label{estimate 0}
    \frac{\ud}{\ud t}\tnm{\fe}^2
    +\e^{-1}\delta\bnms{(\ik-\pk)[\fe]}^2
    \ls\big(\e^{\frac{1}{2}}+\zz\big)\ee+\e\dd+\e^{2k+3}.
\end{align}
\end{proposition}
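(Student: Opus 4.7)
The plan is to take the $L^2_{x,p}$ inner product of \eqref{re-f} against $\fe$. The time derivative becomes $\tfrac{1}{2}\tfrac{\ud}{\ud t}\tnm{\fe}^2$, and the transport term $\br{\hp\cdot\nx\fe,\fe}=0$ by integration by parts since $\hp$ is independent of $x$. Lemma \ref{ss 01} supplies the coercivity $\e^{-1}\br{\li[\fe],\fe}\gs\e^{-1}\delta\nms{(\ik-\pk)[\fe]}^2$, which I keep on the left as the main dissipation; every other term must then be either small, absorbable into a small fraction of this dissipation, or controlled by $\tnm{\fe}^2$ and $\e\dd$.

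The nonlinear term $\e^{k-1}\br{\Gamma[\fe,\fe],\fe}$ is handled by Lemma \ref{ss 02} together with the Sobolev embedding $H^2_x\hookrightarrow L^\infty_x$. From the definition of $\ee$ we have $\nm{\fe}_{H^2_xL^2_p}^2\ls\e^{-2}\ee$, and the a priori bound \eqref{rr 01} then yields $\nm{\fe}_{L^\infty_xL^2_p}\ls\e^{-5/4}$, so that for $k\ge 2$ the nonlinear contribution is bounded by $C\e^{-1/4}\nms{\fe}\nms{(\ik-\pk)[\fe]}$; splitting $\nms{\fe}\leq\nms{(\ik-\pk)[\fe]}+C\tnm{\fe}$ (since $\pk[\fe]$ carries a Maxwellian factor and is hence $L^2$-equivalent to its $\sigma$-norm) and applying Young's inequality converts this into a small multiple of $\e^{-1}\nms{(\ik-\pk)[\fe]}^2$ plus $O(\e^{1/2})\tnm{\fe}^2$. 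The linear cross terms $\e^{i-1}\br{\Gamma[\mhh F_i,\fe]+\Gamma[\fe,\mhh F_i],\fe}$, $1\le i\le 2k-1$, are treated the same way, using the smoothness of $F_i$ in $(t,x)$ and its Maxwellian decay in $p$ to make $\tbs{\mhh F_i}$ and $\abss{\mhh F_i}$ uniformly bounded; the resulting bound $\ls\nms{\fe}\nms{(\ik-\pk)[\fe]}$ is again absorbable into the dissipation plus $O(\e^{1/2})\tnm{\fe}^2$. Finally, the source rewrites as $\sb=\sum\e^{i+j-k}\Gamma[\mhh F_i,\mhh F_j]$ with $i+j\ge 2k+1$, so Lemma \ref{ss 02} gives $|\br{\sb,\fe}|\ls\e^{k+1}\nms{(\ik-\pk)[\fe]}$, and Young's inequality produces a small multiple of $\e^{-1}\nms{(\ik-\pk)[\fe]}^2$ together with the stated error $C\e^{2k+3}$.

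The main obstacle is the momentum-growth term $-\br{\mhh(\dt\mh+\hp\cdot\nx\mh)\fe,\fe}$. By Lemma \ref{ss 06} its pointwise size is $\ls\zz\,p^0|\fe|^2$, producing a contribution $\ls\zz\btnm{\sqrt{p^0}\fe}^2$ that the bare dissipation $\e^{-1}\nms{(\ik-\pk)[\fe]}^2$ cannot control for large $|p|$. I decompose $\fe=\pk[\fe]+(\ik-\pk)[\fe]$: the macroscopic part gives $\zz\btnm{\sqrt{p^0}\pk[\fe]}^2\ls\zz\tnm{\fe}^2$ because the Maxwellian factor in $\pk[\fe]$ absorbs any polynomial growth in $p^0$; for the microscopic part the pointwise inequality $p^0\le w_0^2$ (valid since $N_0\ge 3$) converts the $\sqrt{p^0}$-weighted norm into the weighted norm, $\zz\btnm{\sqrt{p^0}(\ik-\pk)[\fe]}^2\ls\zz\btnmw{(\ik-\pk)[\fe]}^2$, which is the term displayed in the first inequality of the proposition (up to the harmless factor $\e\le 1$). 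The transition to \eqref{estimate 0} then follows from the definitions: $\tnm{\fe}^2\le\ee$, $\btnmw{(\ik-\pk)[\fe]}^2\le\ee$, and the norm comparison from Remark~\ref{sim} together with $\e^{-1}\nmsw{(\ik-\pk)[\fe]}^2\subset\dd$ give $\btnmw{(\ik-\pk)[\fe]}^2\le\nmsw{(\ik-\pk)[\fe]}^2\le\e\dd$, so that $\zz\e\btnmw{(\ik-\pk)[\fe]}^2\ls\e\dd$ under \eqref{semp 3}.
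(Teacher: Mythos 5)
Your plan is the paper's: pair \eqref{re-f} with $\fe$ in $L^2_{x,p}$, use Lemma~\ref{ss 01} for coercivity, Lemma~\ref{ss 02} plus Sobolev embedding for the quadratic term, the decay of $F_i$ for the linear cross terms, and Cauchy--Schwarz for the source. Those parts of your argument are sound and match the paper's, including the $\e^{-5/4}$ and $\e^{-1/4}$ bookkeeping via \eqref{rr 01} for $k\ge2$.

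The one place where you diverge is the momentum-growth term, and there your write-up contains an error of direction. You bound
\begin{align*}
\zz\btnm{\sqrt{p^0}\,(\ik-\pk)[\fe]}^2\ls\zz\btnmw{(\ik-\pk)[\fe]}^2
\end{align*}
via the pointwise inequality $p^0\le w_0^2$, and then claim this is the displayed term $\zz\e\btnmw{(\ik-\pk)[\fe]}^2$ ``up to the harmless factor $\e\le1$.'' That direction is wrong: you have obtained the \emph{larger} quantity (no $\e$), and a larger quantity is not controlled by a smaller one. What you have actually proved is a strictly weaker first inequality, with $\zz\btnmw{(\ik-\pk)[\fe]}^2$ in place of $\zz\e\btnmw{(\ik-\pk)[\fe]}^2$. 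The paper recovers the extra $\e$ by splitting the $p$-integral at $p^0\sim\e^{-1}\kappa$: for $p^0\le\e^{-1}\kappa$ the term $\zz\,p^0|(\ik-\pk)[\fe]|^2$ is at most $\kappa\e^{-1}|(\ik-\pk)[\fe]|^2$ and is absorbed by the dissipation $\e^{-1}\bnms{(\ik-\pk)[\fe]}^2$ since $|\cdot|\ls|\cdot|_\sigma$; for $p^0\ge\e^{-1}\kappa$ one has $\e w_0^2\gs\e(p^0)^2\gs p^0$, which is what produces the $\e$. Fortunately your weaker intermediate bound is still enough for the conclusion you actually need: with $T_0\le T$ bounded, \eqref{sim} gives $\btnmw{g}\ls\nmsw{g}$, and since $\e^{-1}\nmsw{(\ik-\pk)[\fe]}^2$ appears in $\dd$ you get $\zz\btnmw{(\ik-\pk)[\fe]}^2\ls\zz\,\e\,\dd\ls\e\dd$ using \eqref{semp 3}, which is precisely what \eqref{estimate 0} requires. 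So your derivation of \eqref{estimate 0} is correct, but you should either adopt the paper's dyadic-in-$\e$ splitting to justify the first displayed inequality as stated, or simply state and prove the weaker version of the first inequality and note that it already implies \eqref{estimate 0}.
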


\begin{proof}
We take the $L^2$ inner product with $\fe$ on both sides of \eqref{re-f} and integrate by parts to get
\begin{align}\label{L2f1}
    &\frac{1}{2}\frac{\ud}{\ud t}\tnm{\fe}^2
    +\frac{1}{\e}\br{\li[\fe],\fe} \\
    =&\;\e^{k-1}\br{\Gamma[\fe,\fe],\fe} +\sum_{i=1}^{2k-1}\e^{i-1} \br{\Gamma\big[\mhh F_i,\fe\big]
    +\Gamma\big[\fe,\mhh F_i\big]}\no\\
    &+\br{\mhh\Big(\dt\mh+\hp\cdot\nx\mh\Big)\fe,\fe}+\br{\sb,\fe},\nonumber
\end{align}

\paragraph{L.H.S. of \eqref{L2f1}:}
Using Lemma \ref{ss 01}, we know for some constant $\delta>0$
\begin{align}\label{tt 02}
    \text{L.H.S. of \eqref{L2f1}}\geq\frac{1}{2}\frac{\ud}{\ud t}\tnm{\fe}^2
    +\e^{-1}\delta\nms{(\ik-\pk)[\fe]}^2.
\end{align}

\paragraph{First Term on the R.H.S. of \eqref{L2f1}:}
Using Lemma \ref{ss 02} for $p$ integral, $(\infty,2,2)$ for $x$ integral, and Sobolev embedding, we have
\begin{align}
    &\abs{\e^{k-1}\br{\Gamma[\fe,\fe],\fe}}=\abs{\e^{k-1}\br{\Gamma[\fe,\fe],(\ik-\pk)[\fe]}}\\
    \ls&\int_{x\in\r^3}\tbs{\fe}\abss{\fe}\abss{(\ik-\pk)[\fe]}
    \ls\e^{k-1}\nm{\fe}_{H^2}\Big(\nms{(\ik-\pk)[\fe]}+\nms{\pk[\fe]}\Big)\nms{(\ik-\pk)[\fe]}\no\\
    \ls &\,\e^{\frac{1}{2}}\nms{(\ik-\pk)[\fe]}^2+\e^{\frac{1}{2}}\nms{\pk[\fe]}^2\ls \e^{\frac{1}{2}}\nms{(\ik-\pk)[\fe]}^2+\e^{\frac{1}{2}}\tnm{\fe}^2 .\no
\end{align}

\paragraph{Second Term on the R.H.S. of \eqref{L2f1}:}
Considering that $F_k$ has fast decay in $p$ by \eqref{growth0}, we know
\begin{align}
   &\abs{\sum_{i=1}^{2k-1}\e^{i-1} \br{\Gamma\big[\mhh F_i,\fe\big]
    +\Gamma\big[\fe,\mhh F_i\big]},\fe}\\
    \ls&\, o(1)\e^{-1}\nms{(\ik-\pk)[\fe]}^2+\e\nms{\fe}^2\ls o(1)\e^{-1}\nms{(\ik-\pk)[\fe]}^2+\e\nms{\pk[\fe]}^2\no\\
    \ls &\,o(1)\e^{-1}\nms{(\ik-\pk)[\fe]}^2+\e\tnm{\fe}^2\no.
\end{align}

\paragraph{Third Term on the R.H.S. of \eqref{L2f1}:}
Using Lemma \ref{ss 06}, we have for $\kappa$ sufficiently small
\begin{align}
    &\abs{\br{\mhh\Big(\dt\mh+\hp\cdot\nx\mh\Big)\fe,\fe}}\ls \zz\nm{\sqrt{p^0}\fe}^2\\
    \ls&\,\zz\bigg(\bnm{\sqrt{p^0}\pk[\fe]}^2+\int_{\r^3}\int_{p^0\leq\e^{-1}\kappa}p^0\babs{(\ik-\pk)[\fe]}^2+\int_{\r^3}\int_{p^0\geq\e^{-1}\kappa}p^0\babs{(\ik-\pk)[\fe]}^2\bigg)\no\\
    \ls&\,\zz\bigg(\nm{\pk[\fe]}^2+\e^{-1}o(1)\bnms{(\ik-\pk)[\fe]}^2+\int_{\r^3}\int_{p^0\geq\e^{-1}\kappa}p^0\babs{(\ik-\pk)[\fe]}^2\bigg)\no\\
    \ls&\,\zz\tnm{\fe}^2+o(1)\e^{-1}\bnms{(\ik-\pk)[\fe]}^2+\zz\int_{\r^3}\int_{p^0\geq\e^{-1}\kappa}p^0\babs{(\ik-\pk)[\fe]}^2.\no
\end{align}
Notice that for $p^0\gs\e^{-1}$, we have $\e w_0^2\gs\e (p^0)^2\gs p^0$, and thus
\begin{align}
    \int_{\r^3}\int_{p^0\geq\e^{-1}\kappa}p^0\babs{(\ik-\pk)[\fe]}^2\ls  \e\btnmw{(\ik-\pk)[\fe]}^2.
\end{align}
Then we have
\begin{align}
    &\abs{\br{\mhh\Big(\dt\mh+\hp\cdot\nx\mh\Big)\fe,\fe}}\\
    \ls&\,\zz\tnm{\fe}^2+o(1)\e^{-1}\nms{(\ik-\pk)[\fe]}^2+\zz\e\nm{(\ik-\pk)[\fe]}_{w_0}^2.\no
\end{align}

\paragraph{Fourth Term on the R.H.S. of \eqref{L2f1}:}
Using Cauchy's inequality, we have
\begin{align}
    \abs{\br{\sb,\fe}}&=\abs{\br{\sb,(\ik-\pk)[\fe]}}\ls o(1)\e^{-1}\nms{(\ik-\pk)[\fe]}^2+\e\tnm{\sb}^2\\
    &\ls o(1)\e^{-1}\nms{(\ik-\pk)[\fe]}^2+\e^{2k+3}.\no
\end{align}

\paragraph{R.H.S. of \eqref{L2f1}:}
In total, we have
\begin{align}\label{tt 03}
    \text{R.H.S. of \eqref{L2f1}}\ls&\,o(1)\e^{-1}\nms{(\ik-\pk)[\fe]}^2+\big(\e^{\frac{1}{2}}+\zz\big)\tnm{\fe}^2\\
    &+\zz\e\tnmw{(\ik-\pk)[\fe]}^2+\e^{2k+3}.\no
\end{align}

\paragraph{Summary:}
Combining \eqref{tt 02} and \eqref{tt 03}, and absorbing $o(1)\nms{(\ik-\pk)[\fe]}^2$ into the L.H.S., we have
\begin{align}
    &\,\frac{\ud}{\ud t}\tnm{\fe}^2
    +\e^{-1}\delta\nms{(\ik-\pk)[\fe]}^2\\
    \ls&\,\big(\e^{\frac{1}{2}}+\zz\big)\tnm{\fe}^2+\zz\e\tnmw{(\ik-\pk)[\fe]}^2+\e^{2k+3}.\no
\end{align}

\end{proof}

\subsection{First-Order Derivative Estimate}

In this subsection, we derive the energy estimate for $\nabla_x\fe$ without weight. To this end, we first
take $\nabla_x$, which represents $\p_{x_i}$ for $i=1,2,3$, on both sides of \eqref{re-f} to get
\begin{align}\label{fx}
    &\dt\big(\nabla_x\fe\big)+\hp\cdot\nabla_x\big(\nabla_x\fe\big)
    +\frac{1}{\e}\li[\nabla_x\fe] \\
    =&\,\e^{k-1}\nabla_x\Gamma[\fe,\fe] +\sum_{i=1}^{2k-1}\e^{i-1} \Big\{\nabla_x\Gamma\big[\mhh F_i,\fe\big]
    +\nabla_x\Gamma\big[\fe,\mhh F_i\big]\Big\}\no\\
    &-\nabla_x\bigg(\mhh\Big(\dt\mh+\hp\cdot\nx\mh\Big)\fe\bigg)+\nabla_x\sb+\frac{1}{\e}\jump{\li,\nabla_x}[\fe],\no
\end{align}
where $\jump{\li,\nabla_x}$ is the commutator of $\li$ and $\nabla_x$.

\begin{proposition}\label{basic 1}
For the remainder $\fe$, it holds that
\begin{align}
&\frac{\ud}{\ud t}\tnm{\nabla_x\fe}^2
    +\e^{-1}\delta\nms{\nabla_x(\ik-\pk)[\fe]}^2\\
    \ls&\,\e\nmsww{\nabla_x(\ik-\pk)[\fe]}^2+\big(\e+\zz\big)\tnm{\nabla_x\fe}^2+\zzz\tnmw{(\ik-\pk)[\fe]}^2\no\\
    &+\big(\e^{-2}\zzz+1\big)\nms{(\ik-\pk)[\fe]}^2+\big(\zzz+\e^{-1}\zz\big)\tnm{\fe}^2+\e^{2k+2}.\no
\end{align}
Thus, we have
\begin{align}\label{estimate 1}
    \e\bigg(\frac{\ud}{\ud t}\tnm{\nabla_x\fe}^2
    +\e^{-1}\delta\nms{(\ik-\pk)[\nabla_x\fe]}^2\bigg)\ls \big(\e+\zz\big)\ee+o(1)\e^{-1}\nms{(\ik-\pk)[\fe]}^2+\e\dd+\e^{2k+3}.
\end{align}
\end{proposition}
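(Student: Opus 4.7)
The plan is to apply $\nabla_x$ to the remainder equation \eqref{re-f}, arriving at \eqref{fx}, pair the resulting equation in $L^2$ with $\nabla_x\fe$, and bound each contribution in parallel with the proof of Proposition \ref{basic 0}. The transport term $\br{\hp\cdot\nabla_x(\nabla_x\fe),\nabla_x\fe}$ vanishes after integration by parts since $\hp$ is $x$-independent, giving the standard time-derivative structure on the left. For the linearized collision, Lemma \ref{ss 01} applied to $\nabla_x\fe$ gives the dissipation $\frac{\delta}{\e}\nms{(\ik-\pk)[\nabla_x\fe]}^2$; I then replace $(\ik-\pk)[\nabla_x\fe]$ by $\nabla_x(\ik-\pk)[\fe]$ modulo the commutator $\jump{\pk,\nabla_x}[\fe]$, whose coefficients involve only $\nabla_x(n_0,u,T_0)$ together with the Gaussian $p$-decay from $\mh$, hence contributing controllable lower-order errors bounded by $\zzz\tnm{\fe}^2$.

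For the nonlinear term $\e^{k-1}\br{\nabla_x\Gamma[\fe,\fe],\nabla_x\fe}$, I distribute $\nabla_x$ onto the two arguments of $\Gamma$ and onto the $(t,x)$-dependent coefficients appearing in Lemma \ref{ss 07}, then apply Lemma \ref{ss 02} together with the Sobolev embedding $L^\infty_x\hookrightarrow H^2_x$ and the bootstrap hypothesis \eqref{rr 01}; the smallness of $\e^{k-1}\nm{\fe}_{H^2}$ absorbs this into $\frac{o(1)}{\e}\nms{\nabla_x(\ik-\pk)[\fe]}^2$ with an $\e\tnm{\nabla_x\fe}^2$ remainder. The source terms involving $\mhh F_i$ are handled identically using the fast momentum decay of $F_i$, and $\br{\nabla_x\sb,\nabla_x\fe}$ yields the $\e^{2k+2}$ term by Cauchy--Schwarz.

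The two delicate pieces are the commutator term $\br{\frac{1}{\e}\jump{\li,\nabla_x}[\fe],\nabla_x\fe}$ and the differentiated local-Maxwellian contribution $-\br{\nabla_x(\mhh(\dt+\hp\cdot\nx)\mh\,\fe),\nabla_x\fe}$. By Lemma \ref{ss 07}, the $\nabla_x$-action on $\li$ only differentiates the $(n_0,u,T_0)$-dependence of $\sigma^{ij}$ and $u_0,u$, so the commutator has coefficients bounded by $\zzz$; splitting $\fe=\pk[\fe]+(\ik-\pk)[\fe]$ and applying Cauchy--Schwarz with appropriate weights, exactly as in the model computation \eqref{example1}, produces $\e^{-2}\zzz\nms{(\ik-\pk)[\fe]}^2+\zzz\tnm{\fe}^2$ together with an absorbable $\frac{o(1)}{\e}\nms{\nabla_x(\ik-\pk)[\fe]}^2$ piece. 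For the Maxwellian-growth term I distribute $\nabla_x$: the part where it hits $\fe$ has coefficient bounded by $\zz p^0$ (Lemma \ref{ss 06} with $\ell=0$) and is treated by the same low/high $p^0$ split as in Proposition \ref{basic 0} (on $\{p^0>\e^{-1}\kappa\}$ using $p^0\ls\e w_1^2$ since $N_0\geq 3$), producing $\zz\tnm{\nabla_x\fe}^2+\zz\e\nmsww{\nabla_x(\ik-\pk)[\fe]}^2$; the part where $\nabla_x$ hits the Maxwellian factor has coefficient bounded by $\zzz(p^0)^2$ (Lemma \ref{ss 06} with $\ell=1$) acting on $\fe$, and a macro/micro combined with a high/low momentum split (now using $w_0$) yields $\zzz\tnmw{(\ik-\pk)[\fe]}^2+\zzz\tnm{\fe}^2+\nms{(\ik-\pk)[\fe]}^2$, the last piece accounting for the constant $1$ in the coefficient $(\e^{-2}\zzz+1)$. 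The additional $\e^{-1}\zz\tnm{\fe}^2$ arises from cross Cauchy--Schwarz bookkeeping balancing $\zz$-weighted momentum terms against the $\frac{1}{\e}$ factor from the collision dissipation normalization.

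Collecting all contributions and absorbing the $\frac{o(1)}{\e}\nms{\nabla_x(\ik-\pk)[\fe]}^2$ pieces into the LHS dissipation gives the first assertion of the proposition; \eqref{estimate 1} then follows by multiplying through by $\e$ and identifying each right-hand term with an $\ee$- or $\dd$-component. The main obstacle is the commutator $\jump{\li,\nabla_x}$: its $\frac{1}{\e}$ prefactor is what forces the coefficient $\e^{-2}\zzz$ on $\nms{(\ik-\pk)[\fe]}^2$ and is the structural reason why $\tnm{\nabla_x\fe}^2$ must enter $\ee$ with the multiplier $\e$ rather than as an unweighted energy.
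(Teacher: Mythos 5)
Your proposal follows essentially the same route as the paper: apply $\nabla_x$ to \eqref{re-f}, pair with $\nabla_x\fe$, use Lemma~\ref{ss 01} for the dissipation and Lemma~\ref{ss 02}/Lemma~\ref{ss 05} plus Sobolev embedding and \eqref{rr 01} for the nonlinearity, treat the $\li$-commutator via the split into $\li\big[\jump{\pk,\nabla_x}[\fe]\big]$ and $\jump{\li,\nabla_x}\big[(\ik-\pk)[\fe]\big]$ as in \eqref{example1}, and handle the Maxwellian-growth term with a macro/micro and low/high-$p^0$ split. Two small bookkeeping slips, neither fatal: the constant $1$ in $\big(\e^{-2}\zzz+1\big)\nms{(\ik-\pk)[\fe]}^2$ actually arises from the nonlinear $\Gamma$-estimate (after absorbing $\e^{\frac12}\nm{\fe}_{H^1_\sigma}\nms{\nabla_x(\ik-\pk)[\fe]}$) and not from the Maxwellian-growth term, whose contributions all carry a $\zz$ or $\zzz$ prefactor; and the term $\e^{-1}\zz\tnm{\fe}^2$ comes specifically from Cauchy--Schwarz on $\e^{-1}\bbr{\li[\jump{\pk,\nabla_x}[\fe]],(\ik-\pk)[\nabla_x\fe]}$ and from the $F_i$-source term, rather than from a generic rebalancing. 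Otherwise the argument matches the paper's proof.
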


\begin{proof}
We take the $L^2$ inner product with $\nabla_x\fe$ on both sides of \eqref{fx} and integrate by parts to have
\begin{align}\label{L2f1'}
    &\frac{1}{2}\frac{\ud}{\ud t}\tnm{\nabla_x\fe}^2
    +\frac{1}{\e}\bbr{\li[\nabla_x\fe],\nabla_x\fe} \\
    =&\;\e^{k-1}\bbr{\nabla_x\Gamma[\fe,\fe],\nabla_x\fe} +\sum_{i=1}^{2k-1}\e^{i-1} \br{\nabla_x\Gamma\big[\mhh F_i,\fe\big]
    +\nabla_x\Gamma\big[\fe,\mhh F_i\big],\nabla_x\fe}\no\\
    &+\br{\nabla_x\bigg(\mhh\Big(\dt\mh+\hp\cdot\nx\mh\Big)\fe\bigg),\nabla_x\fe}+\br{\nabla_x\sb,\nabla_x\fe}+\frac{1}{\e}\bbr{\jump{\li,\nabla_x}[\fe],\nabla_x\fe},\nonumber
\end{align}

\paragraph{L.H.S. of \eqref{L2f1'}:}
Using Lemma \ref{ss 01}, we know
\begin{align}\label{tt 02'}
    \text{L.H.S. of \eqref{L2f1'}}\geq\frac{1}{2}\frac{\ud}{\ud t}\tnm{\nabla_x\fe}^2
    +\e^{-1}\delta\nms{(\ik-\pk)[\nabla_x\fe]}^2.
\end{align}

\paragraph{First Term on the R.H.S. of \eqref{L2f1'}:}
Using Lemma \ref{ss 02} for $p$ integral, $(\infty,2,2)$ or $(4,4,2)$ for $x$ integral, and Sobolev embedding, we have
\begin{align}
    &\abs{\e^{k-1}\br{\nabla_x\Gamma[\fe,\fe],\nabla_x\fe}}\\
    \ls&\,\e^{k-1}\int_{x\in\r^3}\Big[\Big(\tbs{\fe}\abss{\nabla_x\fe}+\abss{\fe}\tbs{\nabla_x\fe}\Big)\abss{(\ik-\pk)[\nabla_x\fe]}+\zz\tbs{\fe}\abss{\fe}\abss{\nx\fe}\Big]\no\\
    \ls&\,\e^{k-1}\nm{\fe}_{H^2}\nm{\fe}_{H^1_{\sigma}}\nms{(\ik-\pk)[\nabla_x\fe]}+\zz\e^{k-1}\nm{\fe}_{H^2}\nms{\fe}\nms{\nx\fe}\no\\
    \ls&\,\e^{\frac{1}{2}}\nm{\fe}_{H^1_{\sigma}}\nms{(\ik-\pk)[\nabla_x\fe]}+\zz\e^{\frac{1}{2}}\nms{\fe}\nms{\nx\fe}.\no
\end{align}
Notice that
\begin{align}
    \nms{(\ik-\pk)[\nabla_x\fe]}&\ls \nms{\nabla_x(\ik-\pk)[\fe]}+\zz\tnm{\fe},\label{xsc}\\
    \nms{\nx\fe}&\ls \nms{\nx(\ik-\pk)[\fe]}+\tnm{\nx\fe}+\zz\tnm{\fe},\no
\end{align}
and thus
\begin{align}
    \nm{\fe}_{H^1_{\sigma}}\ls \nms{(\ik-\pk)[\fe]}+\nms{\nx(\ik-\pk)[\fe]}+\nm{\fe}_{H^1}.
\end{align}
Hence, we have
\begin{align}
    &\abs{\e^{k-1}\br{\nabla_x\Gamma[\fe,\fe],\nabla_x\fe}}\\
    \ls&\,\e^{\frac{1}{2}}\Big(\nms{(\ik-\pk)[\fe]}+\nms{(\ik-\pk)[\nx\fe]}+\nm{\fe}_{H^1}\Big)\Big(\nms{\nabla_x(\ik-\pk)[\fe]}+\zz\nm{\fe}\Big)\no\\
    &+\zz\e^{\frac{1}{2}}\Big(\nms{(\ik-\pk)[\fe]}+\tnm{\fe}\Big)\Big(\nms{\nx(\ik-\pk)[\fe]}+\tnm{\nx\fe}\Big)\no\\
    \ls&\Big(\nms{(\ik-\pk)[\fe]}^2+\nms{\nx(\ik-\pk)[\fe]}^2\Big)+\zz\nm{\fe}^2+\e\nm{\fe}_{H^1}^2.\no
\end{align}

\paragraph{Second Term on the R.H.S. of \eqref{L2f1'}:}
Similarly, considering that $F_k$ has fast decay in $p$, we know
\begin{align}
  &\abs{\sum_{i=1}^{2k-1}\e^{i-1} \br{\nabla_x\Gamma\left[\mhh F_i,\fe\right]
    +\nabla_x\Gamma\left[\fe,\mhh F_i\right],\nabla_x\fe}}\\
    \ls&\, o(1)\e^{-1}\nms{(\ik-\pk)[\nabla_x\fe]}^2+(\e+\zz)\nm{\fe}_{H^1_{\sigma}}^2\no\\
    \ls&\,o(1)\e^{-1}\nms{\nabla_x(\ik-\pk)[\fe]}^2+o(1)\e^{-1}\zz\nm{\fe}^2+(\e+\zz)\nm{\fe}_{H^1}^2.\no
\end{align}
\paragraph{Third Term on the R.H.S. of \eqref{L2f1'}:}
Using Lemma \ref{ss 06} and $w_0\gs (p^0)^3$, and noticing that for $p^0\gs\e^{-1}$, we have $\e w_1^2\gs \e(p^0)^2\gs p^0$. Then,  for $\kappa$ sufficiently small, we have
\begin{align}
    &\abs{\br{\nabla_x\left(\mhh\Big(\dt\mh+\hp\cdot\nx\mh\Big)\fe\right),\nabla_x\fe}}\\
    \ls&\, \zz\br{p^0\nabla_x\fe,\nabla_x\fe}+\frac{\zzz}{T_0^2}\big|\br{(p^0)^2\fe,\nabla_x\fe}\big|\lesssim \zz\tnm{\sqrt{p^0}\nabla_x\fe}^2+\frac{\zzz}{T_0^2}\tnm{\sqrt{p^0}p^0\fe}^2\no\\
    \ls&\,\zz\bigg(\nm{\sqrt{p^0}\nabla_x\pk[\fe]}^2+\int_{\r^3}\int_{p^0\leq\e^{-1}\kappa}p^0\abs{\nabla_x(\ik-\pk)[\fe]}^2+\int_{\r^3}\int_{p^0\geq\e^{-1}\kappa}p^0\abs{\nabla_x(\ik-\pk)[\fe]}^2\bigg)\no\\
    &+\frac{\zzz}{T_0^2}\bigg(\nm{\sqrt{p^0}p^0\pk[\fe]}^2+\int_{\r^3}\int_{\r^3}(p^0)^3\abs{(\ik-\pk)[\fe]}^2\bigg)\no\\
    \ls&\,\zz\tnm{\nabla_x\fe}^2+o(1)\e^{-1}\nms{\nabla_x(\ik-\pk)[\fe]}^2+\e\tnmww{\nabla_x(\ik-\pk)[\fe]}^2+\zzz\tnm{\fe}^2+\zzz\nmsw{(\ik-\pk)[\fe]}^2\no\\
    \ls&\,\zz\tnm{\nabla_x\fe}^2+o(1)\e^{-1}\nms{\nabla_x(\ik-\pk)[\fe]}^2+\e\tnmww{\nabla_x(\ik-\pk)[\fe]}^2+\zzz\tnm{\fe}^2+\zzz\nmsw{(\ik-\pk)[\fe]}^2.\no
\end{align}

\paragraph{Fourth Term on the R.H.S. of \eqref{L2f1'}:}
Using Cauchy's inequality and \eqref{xsc}, we have
\begin{align}
    \abs{\br{\nabla_x\sb,\nabla_x\fe}}&\ls o(1)\e^{-1}\nms{(\ik-\pk)[\nabla_x\fe]}^2+\zz\nms{\nabla_x\fe}^2+\e^{2k+2}\\
    &\ls o(1)\e^{-1}\nms{\nabla_x(\ik-\pk)[\fe]}^2+o(1)\e^{-1}\zz\nm{\fe}^2+\zz\nm{\nabla_x\fe}^2+\e^{2k+2}.\no
\end{align}

\paragraph{Fifth Term on the R.H.S. of \eqref{L2f1'}:}
Finally, we have
\begin{align}
    \jump{\li,\nabla_x}[\fe]&=\li[\nabla_x\fe]-\nabla_x\big(\li[\fe]\big)=\li\big[(\ik-\pk)[\nabla_x\fe]\big]-\nabla_x\Big(\li\big[(\ik-\pk)[\fe]\big]\Big)\\
    &=\li\big[\jump{\pk,\nabla_x}[\fe]\big]+\li\Big[\nabla_x\big((\ik-\pk)[\fe]\big)\Big]-\nabla_x\Big(\li\big[(\ik-\pk)[\fe]\big]\Big)\no\\
    &=\li\big[\jump{\pk,\nabla_x}[\fe]\big]+\jump{\li,\nabla_x}\big[(\ik-\pk)[\fe]\big].\no
\end{align}
Hence, naturally we have
\begin{align}
    \abs{\frac{1}{\e}\bbr{\jump{\li,\nabla_x}[\fe],\nabla_x\fe}}\ls \e^{-1}\Babs{\br{\li\big[\jump{\pk,\nabla_x}[\fe]\big],\nabla_x\fe}}+\e^{-1}\Babs{\br{\jump{\li,\nabla_x}\big[(\ik-\pk)[\fe]\big],\nabla_x\fe}}.
\end{align}
For the first term, we have
\begin{align}
    \e^{-1}\abs{\br{\li\big[\jump{\pk,\nabla_x}[\fe]\big],\nabla_x\fe}}=\e^{-1}\abs{\br{\li\big[\jump{\pk,\nabla_x}[\fe]\big],(\ik-\pk)[\nabla_x\fe]}}.
\end{align}
Note that $\jump{\pk,\nabla_x}$ only contains terms that $\nabla_x$ hits the Maxwellian but not $\fe$. Hence, we have
\begin{align}
    \e^{-1}\abs{\br{\li\big[\jump{\pk,\nabla_x}[\fe]\big],\nabla_x\fe}}\ls o(1)\e^{-1}\nms{\nabla_x(\ik-\pk)[\fe]}^2+\e^{-1}\zz\tnm{\fe}^2.
\end{align}
For the second term, since $\jump{\li,\nabla_x}$ indicates that $\nabla_x$ only hits the Maxwellian in $\li$ but not on $(\ik-\pk)[\fe]$, we directly bound
\begin{align}
    \e^{-1}\abs{\br{\jump{\li,\nabla_x}\big[(\ik-\pk)[\fe]\big],\nabla_x\fe}}&\ls \e^{-1} \sup_{0\leq t\leq t_0,x\in\mathbb R^3}\babs{\nabla_{t,x}(n_0,u,T_0)} \nms{(\ik-\pk)[\fe]}\nms{\nabla_x\fe}\\
    &\ls\zz\tnm{\fe}^2_{H^1}+o(1)\e^{-1}\nms{\nabla_x(\ik-\pk)[\fe]}^2+\e^{-2}\zzz\nms{(\ik-\pk)[\fe]}^2.\no
\end{align}
In total, we have
\begin{align}
    \abs{\frac{1}{\e}\br{\jump{\li,\nabla_x}[\fe],\nabla_x\fe}}
    \ls&\,o(1)\e^{-1}\nms{\nabla_x(\ik-\pk)[\fe]}^2+\zz\tnm{\nabla_x\fe}^2\\
    &+\e^{-2}\zzz\nms{(\ik-\pk)[\fe]}^2+\e^{-1}\zz\tnm{\fe}^2.\no
\end{align}

\paragraph{R.H.S. of \eqref{L2f1'}:}
In total, we have
\begin{align}\label{tt 03'}
    \text{R.H.S. of \eqref{L2f1'}}\ls&\,o(1)\e^{-1}\nms{\nabla_x(\ik-\pk)[\fe]}^2+\e\nmsww{\nabla_x(\ik-\pk)[\fe]}^2+\big(\e+\zz\big)\tnm{\nabla_x\fe}^2\\
    &+\zzz\tnmw{(\ik-\pk)[\fe]}^2+\big(\e^{-2}\zzz+1\big)\nms{(\ik-\pk)[\fe]}^2+\big(\zzz+\e^{-1}\zz\big)\tnm{\fe}^2+\e^{2k+2}.\no
\end{align}

\paragraph{Summary:}
Combining \eqref{tt 02'} and \eqref{tt 03'}, and absorbing $o(1)\nms{(\ik-\pk)[\nabla_x\fe]}^2$ into the L.H.S., we use \eqref{xsc} to have
\begin{align}
    &\frac{\ud}{\ud t}\tnm{\nabla_x\fe}^2
    +\e^{-1}\delta\nms{\nabla_x(\ik-\pk)[\fe]}^2\\
    \ls&\,\e\nmsww{\nabla_x(\ik-\pk)[\fe]}^2+\big(\e+\zz\big)\tnm{\nabla_x\fe}^2+\zzz\tnmw{(\ik-\pk)[\fe]}^2\no\\
    &+\big(\e^{-2}\zzz+1\big)\nms{(\ik-\pk)[\fe]}^2+\big(\zzz+\e^{-1}\zz\big)\tnm{\fe}^2+\e^{2k+2}.\no
\end{align}
\end{proof}

\subsection{Second-Order Derivative Estimate}

In this subsection, we justify the $L^2$ estimate of $\nabla_x^2\fe$ without weight.
We first
take $\nabla_x^2$, which represents $\p_{x_i}\p_{x_j}$ for $i,j=1,2,3$, on both sides of \eqref{re-f} to get
\begin{align}\label{fx'}
    &\,\dt\big(\nabla_x^2\fe\big)+\hp\cdot\nabla_x\big(\nabla_x^2\fe\big)
    +\frac{1}{\e}\li[\nabla_x^2\fe] \\
    =&\,\e^{k-1}\nabla_x^2\Gamma[\fe,\fe] +\sum_{i=1}^{2k-1}\e^{i-1} \Big\{\nabla_x^2\Gamma\big[\mhh F_i,\fe\big]
    +\nabla_x^2\Gamma\big[\fe,\mhh F_i\big]\Big\}\no\\
    &\quad -\nabla_x^2\bigg(\mhh\Big(\dt\mh+\hp\cdot\nx\mh\Big)\fe\bigg)+\nabla_x^2\sb+\frac{1}{\e}\jump{\li,\nabla_x^2}[\fe],\no
\end{align}
where $\jump{\li,\nabla_x^2}$ is the commutator of $\li$ and $\nabla_x^2$.

\begin{proposition}\label{basic 2}
For the remainder $\fe$, it holds that
\begin{align}
    &\,\frac{\ud}{\ud t}\tnm{\nabla_x^2\fe}^2
    +\e^{-1}\delta\nms{(\ik-\pk)[\nabla_x^2\fe]}^2\\
    \ls&\,\big(\zz+\e^{\frac{1}{2}}\big)\tnm{\nabla_x^2\fe}^2+\e\zz\tnmwww{\nabla_x^2\fe}^2+\zzz\nms{w_0(\ik-\pk)[\fe]}^2+\zzz\nms{w_1(\ik-\pk)[\nabla_x\fe]}^2\no\\
    &\,+\e^{-2}\zzz\nms{(\ik-\pk)[\nx\fe]}^2+\e^{-\frac{5}{2}}\nms{(\ik-\pk)[\fe]}^2+\e^{-1}\zzz\nm{\fe}_{H^1}^2+\e^{2k+2}.\no
\end{align}
Thus, we have
\begin{align}\label{estimate 2}
    &\,\e^2\bigg(\frac{\ud}{\ud t}\tnm{\nabla_x^2\fe}^2
    +\e^{-1}\delta\nms{(\ik-\pk)[\nabla_x^2\fe]}^2\bigg)\\
    \ls&\,\big(\zz+\e^{\frac{1}{2}}\big)\ee+\e^{-\frac{1}{2}}\nms{(\ik-\pk)[\fe]}^2+\e\dd+\e^{2k+4}.\no
\end{align}
\end{proposition}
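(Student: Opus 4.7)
The plan is to mirror the derivation of Propositions \ref{basic 0} and \ref{basic 1}, taking the $L^2$ inner product of \eqref{fx'} with $\nabla_x^2\fe$ and integrating by parts, so that
\begin{align}
\frac{1}{2}\frac{\ud}{\ud t}\tnm{\nabla_x^2\fe}^2+\frac{1}{\e}\bbr{\li[\nabla_x^2\fe],\nabla_x^2\fe}
\end{align}
equals the sum of a nonlinear term, an $F_i$-interaction term, a momentum-growth term, a source term and a commutator term. Lemma \ref{ss 01} gives the dissipation $\e^{-1}\delta\nms{(\ik-\pk)[\nabla_x^2\fe]}^2$ on the left. Throughout, I will freely use the commutator identity $\nms{(\ik-\pk)[\nabla_x^2\fe]}\ls \nms{\nabla_x^2(\ik-\pk)[\fe]}+\zzz\,\tnm{\fe}_{H^1}$, analogous to \eqref{xsc}, to swap between $(\ik-\pk)\nabla_x^2$ and $\nabla_x^2(\ik-\pk)$ whenever convenient.

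For the nonlinear piece, I would distribute the two spatial derivatives via Leibniz and estimate each of the resulting trilinear forms using Lemma \ref{ss 02}, pairing the highest-derivative factor with an $L^2_x$ norm and lower-derivative factors with $L^\infty_x$ or $L^4_x$ via Sobolev embedding into $H^2_x$. This produces a prefactor of $\e^{k-1}\nm{\fe}_{H^2}$ that, combined with the continuity bound \eqref{rr 01}, is eventually absorbed into $\e^{1/2}\dd$. The $F_i$-interaction is straightforward because $F_i$ has rapid decay in $p$; Cauchy--Schwarz together with the smallness of $\e$ yields a contribution $o(1)\e^{-1}\nms{\nabla_x^2(\ik-\pk)[\fe]}^2+(\e+\zz)\nm{\fe}_{H^2_\sigma}^2$. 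For the source term $\br{\nabla_x^2\sb,\nabla_x^2\fe}$, I project onto $(\ik-\pk)$ and apply Cauchy's inequality to generate the $\e^{2k+2}$ slack.

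The genuinely troublesome pieces are the momentum-growth term $\br{\nabla_x^2(\mhh(\dt\mh+\hp\cdot\nx\mh)\fe),\nabla_x^2\fe}$ and the commutator $\e^{-1}\bbr{\jump{\li,\nabla_x^2}[\fe],\nabla_x^2\fe}$. Using Lemma \ref{ss 06}, the worst term in the growth piece after Leibniz is $\zzz T_0^{-2}(p^0)^3\fe$ paired against $\nabla_x^2\fe$, plus a $\zz\, p^0\nabla_x^2\fe$ self-pairing. I will split each $p^0$-weighted integral on $(\ik-\pk)[\fe]$ according to $\{p^0\lesssim\e^{-1}\kappa\}$ versus its complement; the low region produces $o(1)\e^{-1}\nms{\cdot}^2$ and the high region is absorbed by $\e\tnmwww{\nabla_x^2\fe}^2$ because $\e w_2^2\gs p^0$ for $p^0\gs\e^{-1}$. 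The $\pk$-part is controlled by $\tnm{\nabla_x^2\fe}^2$ after using the Maxwellian decay, while the intermediate-derivative cross terms $\nabla_x(\cdots)\cdot\nabla_x\fe$ explain the appearance of $\zzz\nms{w_1(\ik-\pk)[\nabla_x\fe]}^2$ and $\zzz\nms{w_0(\ik-\pk)[\fe]}^2$.

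For the commutator, expanding as in Proposition \ref{basic 1},
\begin{align}
\jump{\li,\nabla_x^2}[\fe]=\li\big[\jump{\pk,\nabla_x^2}[\fe]\big]+\jump{\li,\nabla_x^2}\big[(\ik-\pk)[\fe]\big],
\end{align}
after pairing with $(\ik-\pk)[\nabla_x^2\fe]$, one factor of $\nabla_x$ on the Maxwellian can carry up to second order by Lemma \ref{ss 06}, so Cauchy--Schwarz yields $o(1)\e^{-1}\nms{\nabla_x^2(\ik-\pk)[\fe]}^2$ plus $\e^{-2}\zzz$ times lower-derivative dissipations of $(\ik-\pk)[\fe]$, which match the term $\e^{-2}\zzz\nms{(\ik-\pk)[\nx\fe]}^2$ and the slightly worse $\e^{-5/2}\nms{(\ik-\pk)[\fe]}^2$ in the statement. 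The main obstacle, just as in the first-derivative estimate, is that this commutator loses a factor of $\e^{-1}$ each time the derivative order is raised; that is precisely why the scaling $\e^2$ in front of \eqref{estimate 2} and the high weight $w_2$ are indispensable. After collecting all bounds, absorbing the $o(1)\e^{-1}\nms{\nabla_x^2(\ik-\pk)[\fe]}^2$ terms into the LHS dissipation gives the first displayed inequality; multiplying through by $\e^2$ and identifying each resulting term with a component of $\ee$ or $\dd$ yields \eqref{estimate 2}.
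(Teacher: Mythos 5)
Your proposal is correct and follows essentially the same route as the paper: the same pairing of \eqref{fx'} with $\nabla_x^2\fe$, the same coercivity from Lemma~\ref{ss 01}, the commutator identity \eqref{pxx} for swapping $(\ik-\pk)\nabla_x^2$ and $\nabla_x^2(\ik-\pk)$, Leibniz plus Lemma~\ref{ss 02} and Sobolev for the quadratic term, Lemma~\ref{ss 06} with the split $\{p^0\lesssim\e^{-1}\kappa\}$ and $\e w_2^2\gs p^0$ for the momentum-growth term, and the decomposition $\jump{\li,\nabla_x^2}[\fe]=\li\big[\jump{\pk,\nabla_x^2}[\fe]\big]+\jump{\li,\nabla_x^2}\big[(\ik-\pk)[\fe]\big]$ for the commutator, which reproduces the $\e^{-5/2}$ and $\e^{-2}\zzz$ terms exactly as in the paper.
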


\begin{proof}
We take the $L^2$ inner product with $\nabla_x^2\fe$ on both sides of \eqref{fx'} and integrate by parts to have
\begin{align}\label{L2f1''}
    &\,\frac{1}{2}\frac{\ud}{\ud t}\tnm{\nabla_x^2\fe}^2
    +\frac{1}{\e}\br{\li[\nabla_x^2\fe],\nabla_x^2\fe} \\
    =&\,\e^{k-1}\br{\nabla_x^2\Gamma[\fe,\fe],\nabla_x^2\fe} +\sum_{i=1}^{2k-1}\e^{i-1} \br{\nabla_x^2\Gamma\big[\mhh F_i,\fe\big]
    +\nabla_x^2\Gamma\big[\fe,\mhh F_i\big],\nabla_x^2\fe}\no\\
    &+\br{\nabla_x^2\bigg(\mhh\Big(\dt\mh+\hp\cdot\nx\mh\Big)\fe\bigg),\nabla_x^2\fe}+\br{\nabla_x^2\sb,\nabla_x^2\fe}+\frac{1}{\e}\br{\jump{\li,\nabla_x^2}[\fe],\nabla_x^2\fe},\nonumber
\end{align}

\paragraph{L.H.S. of \eqref{L2f1''}:}
Using Lemma \ref{ss 01}, we know
\begin{align}\label{tt 02''}
    \text{L.H.S. of \eqref{L2f1''}}\geq\frac{1}{2}\frac{\ud}{\ud t}\tnm{\nabla_x^2\fe}^2
    +\e^{-1}\delta\nms{(\ik-\pk)[\nabla_x^2\fe]}^2.
\end{align}

\paragraph{First Term on the R.H.S. of \eqref{L2f1''}:}
Using Lemma \ref{ss 02} for $p$ integral, $(\infty,2,2)$ or $(4,4,2)$ for $x$ integral, and Sobolev embedding, we have
\begin{align}
    &\abs{\e^{k-1}\br{\nabla_x^2\Gamma[\fe,\fe],\nabla_x^2\fe}}\\
    \ls&\abs{\e^{k-1}\br{\Gamma[\nabla_x^2\fe,\fe]+\Gamma[\fe,\nabla_x^2\fe]+\Gamma[\nabla_x\fe,\nabla_x\fe],(\ik-\pk)[\nabla_x^2\fe]}}\no\\
    &+\zzz\abs{\e^{k-1}\br{\Gamma[\nabla_x\fe,\fe]+\Gamma[\fe,\nabla_x\fe],\nabla_x^2\fe}}+\zzz\abs{\e^{k-1}\br{\Gamma[\fe,\fe],\nabla_x^2\fe}}\no\\
    \ls&\,\e^{k-1}\int_{x\in\r^3}\Big(\abss{\nabla_x^2\fe}\tbs{\fe}+\tbs{\nabla_x^2\fe}\abss{\fe}+\abss{\nabla_x\fe}\tbs{\nabla_x\fe}\Big)\abss{(\ik-\pk)[\nabla_x^2\fe]}\no\\
    &+\zzz\Big(\abss{\nabla_x\fe}\tbs{\fe}+\tbs{\nabla_x\fe}\abss{\fe}+\abss{\fe}\tbs{\fe}\Big)\abss{\nabla_x^2\fe}\no\\
    \ls&\,\e^{k-1}\nm{\fe}_{H^2}\nm{\fe}_{H^2_{\sigma}}\nms{(\ik-\pk)[\nabla_x^2\fe]}+\e^{k-1}\zzz\nm{\fe}_{H^2}\nm{\fe}_{H^1_{\sigma}}\nms{\nabla_x^2\fe}\no\\
    \ls&\,\e^{\frac{1}{2}}\nm{\fe}_{H^2_{\sigma}}\nms{(\ik-\pk)[\nabla_x^2\fe]}+\e^{\frac{1}{2}}\zzz\nm{\fe}_{H^1_{\sigma}}\nms{\nabla_x^2\fe}\no\\
    \ls&\, o(1)\e^{-1}\nms{(\ik-\pk)[\nabla_x^2\fe]}^2+\e^2\nm{(\ik-\pk)[\fe]}_{H^2_{\sigma}}^2+\e^{\frac{1}{2}}\zzz\nm{(\ik-\pk)[\fe]}_{H^1_{\sigma}}^2+\e^{\frac{1}{2}}\zzz\nm{\fe}^2_{H^2}\no\\
    \ls&\, o(1)\e^{-1}\nms{\nabla_x^2(\ik-\pk)[\fe]}^2+o(1)\e^{-1}\zzz\nm{\fe}_{H^1}^2+\e^{\frac{1}{2}}\zzz\nm{(\ik-\pk)[\fe]}_{H^1_{\sigma}}^2+\e^{\frac{1}{2}}\zzz\nm{\fe}^2_{H^2}.\no
\end{align}
Here we used
\begin{align}\label{pxx}
    \nms{(\ik-\pk)[\nabla_x^2\fe]}&\leq \nms{\nabla_x^2(\ik-\pk)[\fe]}+C\zzz\nm{\fe}_{H^1},\\
    \nms{\nabla_x^2\fe}&\leq \nms{(\ik-\pk)[\nabla_x^2\fe]}+ \nms{\pk[\nabla_x^2\fe]}\leq \nms{\nabla_x^2(\ik-\pk)[\fe]}+\tnm{\nabla_x^2\fe}+C\zzz\nm{\fe}_{H^1}.\no
\end{align}

\paragraph{Second Term on the R.H.S. of \eqref{L2f1''}:}
Considering that $F_i$ has fast decay in $p$, we use \eqref{pxx} to know
\begin{align}
  &\abs{\sum_{i=1}^{2k-1}\e^{i-1} \br{\nabla_x^2\Gamma\big[\mhh F_i,\fe\big]
    +\nabla_x^2\Gamma\big[\fe,\mhh F_i\big],\nabla_x^2\fe}}\\
    \ls&\, o(1)\e^{-1}\nms{(\ik-\pk)[\nabla_x^2\fe]}^2+\e\nm{\fe}^2_{H^2_{\sigma}}+\zzz\nm{\fe}_{H^1_{\sigma}}^2\nms{\nabla_x^2\fe}\no\\
    \ls&\, o(1)\e^{-1}\nms{\nabla_x^2(\ik-\pk)[\fe]}^2+\e\nm{\nabla_x^2\fe}^2+\big(\e+\zzz\big)\nm{(\ik-\pk)[\fe]}_{H^1_{\sigma}}^2+\e^{-1}\zzz\nm{\fe}^2_{H^1}.\no
\end{align}

\paragraph{Third Term on the R.H.S. of \eqref{L2f1''}:}
Using Lemma \ref{ss 06}, considering for $p^0\gs\e^{-1}$ we have $\e w_2^2\gs\e(p^0)^2\gs p^0$, we have for $\kappa$ sufficiently small
\begin{align}
    &\abs{\br{\nabla_x^2\left(\mhh\Big(\dt\mh+\hp\cdot\nx\mh\Big)\fe\right),\nabla_x^2\fe}}\\
    \ls& \br{\frac{\zzz}{T^2_0}(p^0)^3|\fe|+\frac{\zzz}{T^2_0}(p^0)^2|\nabla_x\fe|+\zz p^0|\nabla_x^2\fe|,|\nabla_x^2\fe|}\no\\
    \ls&\, \zz\tnm{\sqrt{p^0}\nabla_x^2\fe}^2+\zzz\nmsw{\fe}^2+\zzz\nmsww{\nabla_x\fe}^2\no\\
    \ls&\,\zz\tnm{\sqrt{p^0}\nabla_x^2\fe}^2+\zzz\nms{w_0(\ik-\pk)[\fe]}^2+\zzz\nms{w_1(\ik-\pk)[\nabla_x\fe]}^2+\zzz\tnm{\fe}^2_{H^1}\no\\
    \ls&\,o(1)\e^{-1}\nms{(\ik-\pk)[\nabla_x^2\fe]}^2+o(1)\e^{-1}\zzz\nm{\fe}^2_{H^1}+\zz\tnm{\nabla_x^2\fe}^2+\e\zz\tnmwww{\nabla_x^2\fe}^2\no\\
    &\,+\zzz\nms{w_0(\ik-\pk)[\fe]}^2+\zzz\nms{w_1(\ik-\pk)[\nabla_x\fe]}^2.\no
\end{align}

\paragraph{Fourth Term on the R.H.S. of \eqref{L2f1''}:}
Using Cauchy's inequality, we have
\begin{align}
    \abs{\br{\nabla_x^2\sb,\nabla_x^2\fe}}
    &\ls o(1)\e^{-1}\nms{(\ik-\pk)[\nabla_x^2\fe]}^2+\zz\nms{\nabla_x^2\fe}^2+\e^{2k+2}\\
    &\ls o(1)\e^{-1}\nms{(\ik-\pk)[\nabla_x^2\fe]}^2+o(1)\e^{-1}\zzz\nm{\fe}^2_{H^1}+\zz\nm{\nabla_x^2\fe}^2+\e^{2k+2}.\no
\end{align}

\paragraph{Fifth Term on the R.H.S. of \eqref{L2f1''}:}
Using a similar argument as the estimation of the fifth term on the R.H.S. of \eqref{L2f1'}, we have the fifth term
\begin{align}
    \abs{\frac{1}{\e}\br{\jump{\li,\nabla_x^2}[\fe],\nabla_x^2\fe}}\ls \e^{-1}\abs{\br{\li\Big[\jump{\pk,\nabla_x^2}[\fe]\Big],\nabla_x^2\fe}}+\e^{-1}\abs{\br{\jump{\li,\nabla_x^2}\big[(\ik-\pk)[\fe]\big],\nabla_x^2\fe}}.
\end{align}
For the first term, we have
\begin{align}
    \e^{-1}\abs{\br{\li\Big[\jump{\pk,\nabla_x^2}[\fe]\Big],\nabla_x^2\fe}}=\e^{-1}\abs{\br{\li\Big[\jump{\pk,\nabla_x^2}[\fe]\Big],(\ik-\pk)[\nabla_x^2\fe]}}.
\end{align}
Note that $\jump{\pk,\nabla_x^2}$ only contains terms that $\nabla_x$ hits $\fe$ at most once. Hence, we have
\begin{align}
    \e^{-1}\abs{\br{\li\Big[\jump{\pk,\nabla_x^2}[\fe]\Big],\nabla_x^2\fe}}\ls o(1)\e^{-1}\nms{(\ik-\pk)[\nabla_x^2\fe]}^2+\e^{-1}\zzz\nm{\fe}_{H^1}^2.
\end{align}
For the second term, since $\jump{\li,\nabla_x^2}$ indicates that $\nabla_x$ at most hits $(\ik-\pk)[\fe]$ once, we directly bound
\begin{align}
    &\,\e^{-1}\abs{\br{\jump{\li,\nabla_x^2}\big[(\ik-\pk)[\fe]\big],\nabla_x^2\fe}}\\
    \ls&\, \e^{-1}\zzz\nms{(\ik-\pk)[\fe]}\nms{\nabla_x^2\fe}+\e^{-1}\sup_{0\leq t\leq t_0,x\in\mathbb R^3}\babs{\nabla_{t,x}(n_0,u,T_0)}\nms{\nabla_x(\ik-\pk)[\fe]}\nms{\nabla_x^2\fe}\no\\
    \ls&\,o(1)\e^{-1}\nms{(\ik-\pk)[\nabla_x^2\fe]}^2+\big(\zz+\e^{\frac{1}{2}}\big)\tnm{\nabla_x^2\fe}^2+\e^{-2}\zzz\nms{(\ik-\pk)[\nx\fe]}^2+\e^{-\frac{5}{2}}\zzz\nms{(\ik-\pk)[\fe]}^2.\no
\end{align}
In total, we have the fifth term
\begin{align}
    \abs{\frac{1}{\e}\br{\jump{\li,\nabla_x^2}[\fe],\nabla_x^2\fe}}
    \ls&\,o(1)\e^{-1}\nms{(\ik-\pk)[\nabla_x^2\fe]}^2+\big(\zz+\e^{\frac{1}{2}}\big)\tnm{\nabla_x^2\fe}^2\\
    &\,+\e^{-2}\zzz\nms{(\ik-\pk)[\nx\fe]}^2+\e^{-\frac{5}{2}}\nms{(\ik-\pk)[\fe]}^2+\e^{-1}\zzz\nm{\fe}_{H^1}^2\no\\
   \ls &\,o(1)\e^{-1}\nms{\nabla_x^2(\ik-\pk)[\fe]}^2+\big(\zz+\e^{\frac{1}{2}}\big)\tnm{\nabla_x^2\fe}^2\no\\
    &\,+\e^{-2}\zzz\nms{(\ik-\pk)[\nx\fe]}^2+\e^{-\frac{5}{2}}\nms{(\ik-\pk)[\fe]}^2+\e^{-1}\zzz\nm{\fe}_{H^1}^2.\no
\end{align}

\paragraph{R.H.S. of \eqref{L2f1''}:}
In total, we have
\begin{align}\label{tt 03''}
    \text{R.H.S. of \eqref{L2f1''}}\ls&\,o(1)\e^{-1}\nms{\nabla_x^2(\ik-\pk)[\fe]}^2+\big(\zz+\e^{\frac{1}{2}}\big)\tnm{\nabla_x^2\fe}^2+\e\zz\tnmwww{\nabla_x^2\fe}^2\no\\
    &\,+\zzz\nms{w_0(\ik-\pk)[\fe]}^2+\zzz\nms{w_1(\ik-\pk)[\nabla_x\fe]}^2+\e^{-2}\zzz\nms{(\ik-\pk)[\nx\fe]}^2\no\\
    &\,+\e^{-\frac{5}{2}}\nms{(\ik-\pk)[\fe]}^2+\e^{-1}\zzz\nm{\fe}_{H^1}^2+\e^{2k+2}.\no
\end{align}
\paragraph{Summary:}
Combining \eqref{tt 02''} and \eqref{tt 03''}, and absorbing $o(1)\nms{(\ik-\pk)[\nabla_x^2\fe]}^2$ into the L.H.S., we have
\begin{align}
    &\,\frac{\ud}{\ud t}\tnm{\nabla_x^2\fe}^2
    +\e^{-1}\delta\nms{\nabla_x^2(\ik-\pk)[\fe]}^2\\
    \ls&\,\big(\zz+\e^{\frac{1}{2}}\big)\tnm{\nabla_x^2\fe}^2+\e\zz\tnmwww{\nabla_x^2\fe}^2+\zzz\nms{w_0(\ik-\pk)[\fe]}^2+\zzz\nms{w_1(\ik-\pk)[\nabla_x\fe]}^2\no\\
    &\,+\e^{-2}\zzz\nms{(\ik-\pk)[\nx\fe]}^2+\e^{-\frac{5}{2}}\nms{(\ik-\pk)[\fe]}^2+\e^{-1}\zzz\nm{\fe}_{H^1}^2+\e^{2k+2}.\no
\end{align}

\end{proof}

\bigskip

\section{Weighted Energy Estimates} \label{Sec:weighted-energy}

In this section, we will derive the weighted energy estimates to complete the whole estimates.

\subsection{Weighted Basic Energy Estimate}

To get a good control of the second term on the R.H.S. of \eqref{estimate 0},  we need to derive a weighted $L^2$ estimate of $\fe$.
For this purpose, we take microscopic projection onto (i.e. apply operator $\ik-\pk$ on both sides of) \eqref{re-f} to have
\begin{align}\label{re-f'}
    &\, \dt\big((\ik-\pk)[\fe]\big)+\hp\cdot\nx\big((\ik-\pk)[\fe]\big)+\frac{1}{\e}\li\big[(\ik-\pk)[\fe]\big]\\
    =&\,\e^{k-1}\Gamma[\fe,\fe] +\sum_{i=1}^{2k-1}\e^{i-1} \Big\{\Gamma\big[\mhh F_i,\fe\big]
    +\Gamma\big[\fe,\mhh F_i\big]\Big\}\no\\
    &\,-\mhh\Big(\dt\mh+\hp\cdot\nx\mh\Big)\big((\ik-\pk)[\fe]\big)+(\ik-\pk)[\sb]+\jump{{\bf P},\tau}[\fe],\no
\end{align}
where
\begin{align}
    \jump{{\pk},\tau}={\pk}\tau-\tau{\pk}=(\ik-\pk)\tau-\tau(\ik-\pk)
\end{align}
denotes a commutator of operators ${\pk}$ and $\tau$ which is
given by
\begin{align}
\tau&=\partial_t+\hat{p}\cdot\nabla_x+\mhh\Big(\dt\mh+\hp\cdot\nx\mh\Big).
\end{align}

\begin{proposition}\label{basic 0'}
For the remainder $\fe$, it holds that
\begin{align}\label{w00}
    &\,\frac{\ud}{\ud t}\tnmw{(\ik-\pk)[\fe]}^2+\e^{-1}\delta\nmsw{(\ik-\pk)[\fe]}^2
    +\yy\tnmw{\sqrt{p^0}(\ik-\pk)[\fe]}^2\\
    \ls&\,\e\tnm{\nabla_x\fe}^2+\e^{-1}\nms{(\ik-\pk)[\fe]}^2+\e^3\nm{(\ik-\pk)[\fe]}_{H^2_{\sigma}}^2+\e^3\nm{\fe}_{H^2}^2+\e\tnm{\fe}^2+\e^{2k+3}.\no
\end{align}
Thus, we have
\begin{align}\label{estimate 0'}
    &\,\frac{\ud}{\ud t}\tnmw{(\ik-\pk)[\fe]}^2+\e^{-1}\nmsw{(\ik-\pk)[\fe]}^2
    +\yy\tnmw{\sqrt{p^0}(\ik-\pk)[\fe]}^2\\
    \ls&\, \e\tnm{\nabla_x\fe}^2+\e^{-1}\nms{(\ik-\pk)[\fe]}^2+ \e\big(\ee+\dd)+\e^{2k+3}.
\end{align}
\end{proposition}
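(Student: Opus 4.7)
My plan is to test equation \eqref{re-f'} against $w_0^2(\ik-\pk)[\fe]$ in $L^2_{x,p}$ and estimate each resulting piece. A key structural observation is that, by the definition \eqref{tt 01}, the weight $w_0$ depends only on $t$ and $p$ (the parameter $T$ from \eqref{tt 01'''} is a uniform upper bound on $T_0$, not $T_0$ itself), so no error is generated when integrating by parts in $x$ against the transport operator $\hp\cdot\nx$. For the time derivative I use $\partial_t w_0^2 = -2Y(t) p^0 w_0^2$ to obtain
\begin{align*}
\br{\partial_t(\ik-\pk)[\fe], w_0^2(\ik-\pk)[\fe]} = \tfrac{1}{2}\tfrac{\ud}{\ud t}\tnmw{(\ik-\pk)[\fe]}^2 + \yy\tnmw{\sqrt{p^0}(\ik-\pk)[\fe]}^2,
\end{align*}
which produces the crucial $\yy$--dissipation appearing on the target LHS. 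Lemma \ref{ss 04} with $\ell=0$ handles the collision term $\frac{1}{\e}\br{\li[(\ik-\pk)[\fe]], w_0^2(\ik-\pk)[\fe]}$, yielding $\e^{-1}\delta\nmsw{(\ik-\pk)[\fe]}^2$ on the LHS at the price of a $C\e^{-1}\nms{(\ik-\pk)[\fe]}^2$ error, which is precisely the first RHS term claimed in the statement.

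On the right of \eqref{re-f'}, the momentum-growth linear term is the next delicate piece: by Lemma \ref{ss 06}, $|\mhh(\dt\mh + \hp\cdot\nx\mh)| \leq p^0 \zz$, so Cauchy--Schwarz bounds the corresponding inner product by $C\zz\tnmw{\sqrt{p^0}(\ik-\pk)[\fe]}^2$, and assumption \eqref{assump} ($\zz\leq\frac{1}{2}\yy$) absorbs this into the $\yy$--dissipation on the LHS. The remainder piece $(\ik-\pk)[\sb]$ contributes $\e^{2k+3}$ by Cauchy--Schwarz together with $\nms{\sb}^2\ls\e^{2k+2}$. The coupling terms $\e^{i-1}\Gamma[\mhh F_i,\fe]+\e^{i-1}\Gamma[\fe,\mhh F_i]$ are controlled by Lemma \ref{ss 05}, where the exponential momentum decay of $F_i$ compensates the weight $w_0$ via \eqref{tt 01'}; Young's inequality produces small multiples of the weighted dissipation plus an $\e\tnm{\fe}^2$-size term. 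For the nonlinear term $\e^{k-1}\Gamma[\fe,\fe]$, I apply Lemma \ref{ss 05}, use Sobolev embedding $H^2_x\hookrightarrow L^\infty_x$ on one factor, and exploit $w_0\mh\ls\ue^{-c_0p^0}$ from \eqref{tt 01'} to bound contributions from $w_0\pk[\fe]$ by $\tnm{\fe}$. After Young's inequality the error becomes a constant times $\e^{2k-1}\nm{\fe}_{H^2}^2(\nm{(\ik-\pk)[\fe]}_{H^2_\sigma}^2+\nms{\fe}^2)$, which for $k=2$ delivers the $\e^3\nm{(\ik-\pk)[\fe]}_{H^2_\sigma}^2+\e^3\nm{\fe}_{H^2}^2$ pieces on the target RHS once the a priori bound $\ee\ls\e^{-1/2}$ from \eqref{rr 01} is used on one of the $H^2$ factors.

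The main obstacle is the commutator $\jump{\pk,\tau}[\fe]$. Writing $\tau = \partial_t + \hp\cdot\nx + \mhh(\dt\mh+\hp\cdot\nx\mh)$ and recalling $\pk[\fe]=\mh(a_\fe+p\cdot b_\fe+p^0c_\fe)$, I decompose $\jump{\pk,\tau}[\fe]$ into a spatial transport piece whose symbol is $\hp$ acting either on the Maxwellian factor $\mh$ (producing $\zz$--bounded multipliers that decay like $\ue^{-c_0p^0}$ after pairing with $w_0^2$) or on the macroscopic coefficients $(a_\fe,b_\fe,c_\fe)$ (controlled by $\nabla_x\fe$), plus a purely temporal piece of the same structure but with $\zz$-size coefficients. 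Pairing the spatial part against $w_0^2(\ik-\pk)[\fe]$ and applying Cauchy--Schwarz exactly as in the heuristic \eqref{semp 4} gives $o(1)\e^{-1}\nmsw{(\ik-\pk)[\fe]}^2+C\e\tnm{\nabla_x\fe}^2$, furnishing the $\e\tnm{\nabla_x\fe}^2$ contribution in the target bound. The temporal commutator pieces are dominated by $\zz\tnmw{\sqrt{p^0}(\ik-\pk)[\fe]}^2+\zz\tnm{\fe}^2$ and absorbed into the $\yy$--dissipation on the LHS and into $\e\tnm{\fe}^2$ on the RHS. Collecting all estimates, absorbing the $o(1)$--multiples of $\e^{-1}\nmsw{(\ik-\pk)[\fe]}^2$ into the LHS, and reading off the RHS yields \eqref{w00}; the restated form \eqref{estimate 0'} then follows immediately upon recognizing that the RHS pieces, apart from $\e\tnm{\nabla_x\fe}^2+\e^{-1}\nms{(\ik-\pk)[\fe]}^2$, fit into $\e(\ee+\dd)+\e^{2k+3}$ by the definitions of $\ee$ and $\dd$ given in Theorem \ref{result 2}.
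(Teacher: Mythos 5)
Your proposal is correct and follows essentially the same route as the paper's proof: testing the $(\ik-\pk)$-projected remainder equation against $w_0^2(\ik-\pk)[\fe]$, extracting the $\yy$-dissipation from the time-derivative of the weight, invoking Lemma~\ref{ss 04} for the weighted coercivity of $\li$, Lemmas~\ref{ss 05} and~\ref{ss 06} for the nonlinear and momentum-growth terms, absorbing the latter via $\zz\leq\tfrac{1}{2}\yy$, and controlling the commutator $\jump{\pk,\tau}[\fe]$ by $\e^{-1}\tnm{(\ik-\pk)[\fe]}^2+\e\tnm{\nabla_x\fe}^2+\e\tnm{\fe}^2$. The decomposition of the commutator into spatial and temporal pieces and the remaining bookkeeping match the paper's treatment.
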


\begin{proof}
We take the $L^2$ inner product with $w_0^2(\ik-\pk)[\fe]$ on both sides of \eqref{re-f'} and integrate by parts to obtain
\begin{align}\label{tt 04}
    &\br{w_0^2\big((\ik-\pk)[\fe]\big),\dt\big((\ik-\pk)[\fe]\big)}+\e^{-1}\br{\li\big[(\ik-\pk)[\fe]\big],w_0^2(\ik-\pk)[\fe]}\\
    =&\,\e^{k-1}\br{\Gamma[\fe,\fe],w_0^2(\ik-\pk)[\fe]} +\sum_{i=1}^{2k-1}\e^{i-1} \br{\Gamma\big[\mhh F_i,\fe\big]
    +\Gamma\big[\fe,\mhh F_i\big],w_0^2(\ik-\pk)[\fe]}\no\\
    &-\br{\mhh\Big(\dt\mh+\hp\cdot\nx\mh\Big)\big((\ik-\pk)[\fe]\big),w_0^2(\ik-\pk)[\fe]}+\br{(\ik-\pk)[\sb],w_0^2(\ik-\pk)[\fe]}\no\\
    &+\br{\jump{{\bf P},\tau}[\fe],w_0^2(\ik-\pk)[\fe]}.\no
\end{align}

\paragraph{L.H.S. of \eqref{tt 04}:}
Directly computation reveals that
\begin{align}
    w_0^2\big((\ik-\pk)[\fe]\big)\cdot\dt\big((\ik-\pk)[\fe]\big)=\frac{1}{2}\dt\babs{w_0(\ik-\pk)[\fe]}^2+\yy p^0\babs{w_0(\ik-\pk)[\fe]}^2,
\end{align}
which implies
\begin{align}
    \br{w_0^2\big((\ik-\pk)[\fe]\big),\dt\big((\ik-\pk)[\fe]\big)}=\frac{1}{2}\frac{\ud}{\ud t}\btnmw{(\ik-\pk)[\fe]}^2+\yy\btnmw{\sqrt{p^0}(\ik-\pk)[\fe]}^2.
\end{align}
Using Lemma \ref{ss 04}, we have
\begin{align}
    \e^{-1}\br{\li\big[(\ik-\pk)[\fe]\big],w_0^2(\ik-\pk)[\fe]}\geq\e^{-1}\delta\nmsw{(\ik-\pk)[\fe]}^2-C\e^{-1}\nms{(\ik-\pk)[\fe]}^2.
\end{align}
Hence, we have
\begin{align}\label{tt 05}
    \text{L.H.S. of \eqref{tt 04}}\geq&\,\frac{1}{2}\frac{\ud}{\ud t}\tnmw{(\ik-\pk)[\fe]}^2+\e^{-1}\delta\nmsw{(\ik-\pk)[\fe]}^2\\
    &\,+\yy\btnmw{\sqrt{p^0}(\ik-\pk)[\fe]}^2-C\e^{-1}\nms{(\ik-\pk)[\fe]}^2.\no
\end{align}

\paragraph{First Term on the R.H.S. of \eqref{tt 04}:}
Using Lemma \ref{ss 05} for $p$ integral, $(\infty,2,2)$ for $x$ integral, Sobolev embedding and Remark \ref{remark 1}, we have
\begin{align}
    &\,\abs{\e^{k-1}\br{\Gamma[\fe,\fe],w_0^2(\ik-\pk)[\fe]}}\\
    \ls&\,\e^{k-1}\int_{x\in\r^3}\Big(\tbs{\fe}\abss{w_0\fe}+\abss{\fe}\tbs{w_0\fe}\Big)\abss{w_0(\ik-\pk)[\fe]}\no\\
    \ls&\,\e^{k-1}\Big(\nm{\fe}_{H^2}\nmsw{\fe}+\nm{\fe}_{H^2_{\sigma}}\nm{\fe}_{w_0}\Big)\nmsw{(\ik-\pk)[\fe]}\no\\
    \ls&\,  \Big\{\e^{\frac{1}{2}}\Big(\nmsw{(\ik-\pk)[\fe]}+\nmsw{\pk[\fe]}\Big)+\e\Big(\nm{(\ik-\pk)[\fe]}_{H^2_{\sigma}}+\nm{\pk[\fe]}_{H^2_{\sigma}}\Big)\Big\}\nms{w_0(\ik-\pk)[\fe]}\no\\
    \ls&\,o(1)\e^{-1}\nmsw{(\ik-\pk)[\fe]}^2+\e^3\nm{(\ik-\pk)[\fe]}_{H^2_{\sigma}}^2+\e^3\nm{\fe}_{H^2}^2+\e^2\tnm{\fe}^2.\no
\end{align}

\paragraph{Second Term on the R.H.S. of \eqref{tt 04}:}
Similarly, we use \eqref{growth0} in Proposition \ref{fn} have
\begin{align}
    &\abs{\sum_{i=1}^{2k-1}\e^{i-1} \br{\Gamma\left[\mhh F_i,\fe\right]
    +\Gamma\left[\fe,\mhh F_i\right],w_0^2(\ik-\pk)[\fe]}}\\
    \ls&\, o(1)\e^{-1}\nmsw{(\ik-\pk)[\fe]}^2+\e\nmsw{\fe}^2
    \ls o(1)\e^{-1}\nmsw{(\ik-\pk)[\fe]}^2+\e\tnm{\fe}^2.\no
\end{align}

\paragraph{Third Term on the R.H.S. of \eqref{tt 04}:}
Using Lemma \ref{ss 06}, we know
\begin{align}
    &\abs{\br{\mhh\Big(\dt\mh+\hp\cdot\nx\mh\Big)\big((\ik-\pk)[\fe]\big),w_0^2(\ik-\pk)[\fe]}}\\
    \leq&\br{(p^0\zz)(\ik-\pk)[\fe],w_0^2(\ik-\pk)[\fe]}+C\br{\zzz(\ik-\pk)[\fe],w_0^2(\ik-\pk)[\fe]}\no\\
    \leq&\,\zz \btnmw{\sqrt{p^0}(\ik-\pk)[\fe]}^2+C\zzz\tnmw{(\ik-\pk)[\fe]}^2.\no
\end{align}

\paragraph{Fourth Term on the R.H.S. of \eqref{tt 04}:}
Using Cauchy's inequality, we have
\begin{align}
    \abs{\br{(\ik-\pk)[\sb],w_0^2(\ik-\pk)[\fe]}}&\ls o(1)\e^{-1}\tnmw{(\ik-\pk)[\fe]}^2+\e\tnmw{\sb}^2\\
    &\ls o(1)\e^{-1}\nmsw{(\ik-\pk)[\fe]}^2+\e^{2k+3}.\no
\end{align}

\paragraph{Fifth Term on the R.H.S. of \eqref{tt 04}:}
Finally, noticing that derivative operators $\p_t,\nabla_x$ hit the local Maxwellian in $\pk[\fe]$ and $\jump{{\bf P},\tau}[\fe]$ takes $\mh$ as a factor of its coefficient for all terms included,
we use \eqref{wM} to know
\begin{align}
    &\abs{\br{\jump{{\bf P},\tau}[\fe],w_0^2(\ik-\pk)[\fe]}}\\
    \ls& \tnm{\nabla_x\fe}\tnm{(\ik-\pk)[\fe]}+\zzz\tnm{\fe}\tnm{(\ik-\pk)[\fe]}\no\\
    \ls&\,\e^{-1}\tnm{(\ik-\pk)[\fe]}^2+\e\tnm{\nabla_x\fe}^2+\e\tnm{\fe}^2.\no
\end{align}
\paragraph{R.H.S. of \eqref{tt 04}:}
In total, we know
\begin{align}\label{tt 06}
    \text{R.H.S. of \eqref{tt 04}}\leq&\; Co(1)\e^{-1}\nmsw{(\ik-\pk)[\fe]}^2+\zz \tnmw{\sqrt{p^0}(\ik-\pk)[\fe]}^2\\
    &\;+C\Big(\e\tnm{\nabla_x\fe}^2+\e^3\nm{(\ik-\pk)[\fe]}_{H^2_{\sigma}}^2+\e^3\nm{\fe}_{H^2}^2+\e\tnm{\fe}^2+\e^{2k+3}\Big).\no
\end{align}

\paragraph{Summary:}
Combining \eqref{tt 05} and \eqref{tt 06}, we have
\begin{align}
    &\frac{\ud}{\ud t}\tnmw{(\ik-\pk)[\fe]}^2+\e^{-1}\delta\nmsw{(\ik-\pk)[\fe]}^2
    +\yy\tnmw{\sqrt{p^0}(\ik-\pk)[\fe]}^2\\
    \leq&\,\zz \tnmw{\sqrt{p^0}(\ik-\pk)[\fe]}^2+C\e\tnm{\nabla_x\fe}^2+C\e^{-1}\nms{(\ik-\pk)[\fe]}^2\no\\
    &+C\Big(\e^3\nm{(\ik-\pk)[\fe]}_{H^2_{\sigma}}^2+\e^3\nm{\fe}_{H^2}^2+\e\tnm{\fe}^2+\e^{2k+3}\Big).\no
\end{align}
Then for $\zz\leq\frac{1}{2} \yy$
by \eqref{assump}, we get \eqref{w00}.
\end{proof}

\subsection{Weighted First-Order Derivative Estimate}

In this subsection, we continue to derive the weighted estimates of $\nabla_x\fe$. Taking $x$ derivative in \eqref{re-f'}, we have
\begin{align}\label{re-f''}
    & \dt\Big(\nabla_x(\ik-\pk)[\fe]\Big)+\hp\cdot\nx\Big(\nabla_x(\ik-\pk)[\fe]\Big)+\frac{1}{\e}\li\Big[\nabla_x(\ik-\pk)[\fe]\Big]\\
    =&\,\e^{k-1}\nabla_x\Gamma[\fe,\fe] +\sum_{i=1}^{2k-1}\e^{i-1} \Big\{\nabla_x\Gamma\big[\mhh F_i,\fe\big]
    +\nabla_x\Gamma\big[\fe,\mhh F_i\big]\Big\}\no\\
    &\,-\nabla_x\bigg(\mhh\Big(\dt\mh+\hp\cdot\nx\mh\Big)\Big((\ik-\pk)[\fe]\Big)\bigg)\no\\
    &\,+\nabla_x\Big((\ik-\pk)[\sb]\Big)+\nabla_x\Big(\jump{{\bf P},\tau}[\fe]\Big)+\frac{1}{\e}\jump{\li,\nabla_x}\big[(\ik-\pk)[\fe]\big].\no
\end{align}

\begin{proposition}\label{basic 1'}
For the remainder $\fe$, it holds that
\begin{align}\label{w1x0}
    &\frac{\ud}{\ud t}\tnmww{\nabla_x(\ik-\pk)[\fe]}^2+\e^{-1}\delta\nmsww{\nabla_x(\ik-\pk)[\fe]}^2+\yy\tnmww{\sqrt{p^0}\nabla_x(\ik-\pk)[\fe]}^2\\
    \ls&\,\e^{-1}\nms{\nabla_x(\ik-\pk)[\fe]}^2+\e^{-1}\zzz\nmsw{(\ik-\pk)[\fe]}^2+\e^3\nm{(\ik-\pk)[\fe]}_{H^2_{\sigma}}^2+\e\nm{\fe}_{H^2}^2+\e^{2k+3}.\no
\end{align}
Thus, we have
\begin{align}\label{estimate 1'}
    &\,\e\bigg(\frac{\ud}{\ud t}\tnmww{\nabla_x(\ik-\pk)[\fe]}^2+\e^{-1}\delta\nmsww{\nabla_x(\ik-\pk)[\fe]}^2+\yy\tnmww{\sqrt{p^0}\nabla_x(\ik-\pk)[\fe]}^2\bigg)\\
    \ls&\,\e^2\nm{\nabla_x^2\fe}^2+\nms{\nabla_x(\ik-\pk)[\fe]}^2+\e\big(\ee+\dd\big)+\e^{2k+4}.\no
\end{align}
\end{proposition}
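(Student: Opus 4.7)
The plan is to test the equation \eqref{re-f''} in the $L^2_{x,p}$ inner product with $w_1^2\nabla_x(\ik-\pk)[\fe]$, imitating the weighted basic estimate carried out in Proposition~\ref{basic 0'} but one level of $x$-derivative higher. On the left-hand side, three structural contributions emerge: first, the time derivative produces $\frac{1}{2}\frac{\ud}{\ud t}\tnmww{\nabla_x(\ik-\pk)[\fe]}^2$ together with the good term $\yy\tnmww{\sqrt{p^0}\nabla_x(\ik-\pk)[\fe]}^2$, which comes from differentiating the exponential factor $\exp\big(\frac{p^0}{5T\ln(\ue+t)}\big)$ inside $w_1$; second, the transport term $\hp\cdot\nx$ vanishes after integration by parts in $x$; third, Lemma~\ref{ss 04} applied at weight $\ell=1$ provides the coercive lower bound $\e^{-1}\delta\nmsww{\nabla_x(\ik-\pk)[\fe]}^2$ at the price of the unweighted error $\e^{-1}\nms{\nabla_x(\ik-\pk)[\fe]}^2$, which is precisely the first term appearing on the right-hand side of \eqref{w1x0}.

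The right-hand side is then bounded term by term. For the nonlinear piece $\e^{k-1}\nabla_x\Gamma[\fe,\fe]$, the Leibniz rule distributes $\nabla_x$ and Lemma~\ref{ss 05} combined with Sobolev embedding in $x$ yields a bound of the form $\e^{k-1}\nm{\fe}_{H^2_\sigma}\,\nm{\fe}_{H^2}\,\nmsww{\nabla_x(\ik-\pk)[\fe]}$; Cauchy's inequality together with $k\geq 2$ turns this into $o(1)\e^{-1}\nmsww{\nabla_x(\ik-\pk)[\fe]}^2+\e^3\nm{(\ik-\pk)[\fe]}_{H^2_\sigma}^2+\e\nm{\fe}_{H^2}^2$. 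The linear contributions $\e^{i-1}\Gamma[\mhh F_i,\fe]$ and its mirror are absorbed via the rapid Maxwellian decay of $F_i$, and the source $\nabla_x(\ik-\pk)[\sb]$ is dispatched by Cauchy's inequality against the $\e^{2k+3}$ budget.

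The momentum-growth term $\nabla_x\big(\mhh(\dt\mh+\hp\cdot\nx\mh)(\ik-\pk)[\fe]\big)$ splits into two cases via Lemma~\ref{ss 06}. When $\nabla_x$ falls on $(\ik-\pk)[\fe]$, one obtains a contribution bounded by $\zz\tnmww{\sqrt{p^0}\nabla_x(\ik-\pk)[\fe]}^2$, which is absorbed into the left-hand side thanks to \eqref{assump}, namely $\zz\leq\tfrac{1}{2}\yy$. When $\nabla_x$ hits the Maxwellian, a factor $\zzz(p^0)^2|(\ik-\pk)[\fe]|$ is produced, and after Cauchy-Schwarz against $w_1^2\nabla_x(\ik-\pk)[\fe]$ the weight identity $w_1^2(p^0)^2\leq\tfrac{1}{2}(w_1^2+w_0^2)$ from \eqref{tt 01'} converts the unwanted piece into the dissipation already at hand plus the $\e^{-1}\zzz\nmsw{(\ik-\pk)[\fe]}^2$ term displayed in \eqref{w1x0}. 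The commutator $\nabla_x\jump{\pk,\tau}[\fe]$ is handled as in Proposition~\ref{basic 0'}, since $\pk$ carries a Maxwellian prefactor that kills all algebraic weights.

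The main obstacle is the commutator $\e^{-1}\jump{\li,\nabla_x}\big[(\ik-\pk)[\fe]\big]$, which is treated exactly as in the unweighted estimate \eqref{example1}: the derivative $\nabla_x$ acts only on the coefficients of $\li$ (through $\sigma^{ij}$ and the $u,T_0$ dependence in \eqref{mathA}--\eqref{mathK}), so the commutator is a zeroth-order-in-derivative operator whose $p$-kernel still satisfies the bounds of Lemma~\ref{ss 07}. Pairing it with $w_1^2\nabla_x(\ik-\pk)[\fe]$ and factoring out the small $\zzz$ from \eqref{semp 3'}, one recovers $o(1)\e^{-1}\nmsww{\nabla_x(\ik-\pk)[\fe]}^2+\e^{-1}\zzz\nmsw{(\ik-\pk)[\fe]}^2$. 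Collecting all pieces, absorbing the $o(1)$ terms into the coercive L.H.S., and using $\zz\leq\tfrac{1}{2}\yy$ once more yields \eqref{w1x0}. Finally, \eqref{estimate 1'} follows upon multiplying by $\e$ and repackaging the right-hand side in terms of the structural quantities $\ee$ and $\dd$, using $\e\nm{\fe}_{H^2}^2\ls\ee+\e^2\nm{\nabla_x^2\fe}^2$ and $\e^4\nm{(\ik-\pk)[\fe]}_{H^2_\sigma}^2\ls\e\dd$.
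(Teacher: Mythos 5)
Your proposal mirrors the paper's proof: same test function $w_1^2\nabla_x(\ik-\pk)[\fe]$, same use of the time-dependent exponential weight to generate $\yy\tnmww{\sqrt{p^0}\,\cdot\,}^2$ on the left, same invocation of Lemma~\ref{ss 04} for weighted coercivity at the cost of the unweighted $\e^{-1}\nms{\nabla_x(\ik-\pk)[\fe]}^2$, same split of the momentum-growth term, and same treatment of both commutators $\nabla_x\jump{\pk,\tau}$ and $\e^{-1}\jump{\li,\nabla_x}$. You correctly flag that when $\nabla_x$ from $\nabla_x\jump{\pk,\tau}$ falls on $\fe$ the estimate unavoidably produces $\nm{\nabla_x^2\fe}^2$, which is the very reason the $(\ik-\pk)$-projected weighted estimate cannot be iterated to the highest order and appears explicitly as $\e^2\nm{\nabla_x^2\fe}^2$ in \eqref{estimate 1'}.

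One small arithmetic slip in the final repackaging: you state $\e\nm{\fe}_{H^2}^2\ls\ee+\e^2\nm{\nabla_x^2\fe}^2$, but this fails because $\e\nm{\nabla_x^2\fe}^2$ exceeds the $\e^2\nm{\nabla_x^2\fe}^2$ term in $\ee$ by a factor $\e^{-1}$. What you actually need, after multiplying \eqref{w1x0} by $\e$, is the weaker and correct bound $\e^2\nm{\fe}_{H^2}^2\ls\e\,\ee+\e^2\nm{\nabla_x^2\fe}^2$, since $\e^2\nm{\fe}^2\ls\e\,\ee$ and $\e^2\nm{\nabla_x\fe}^2=\e\cdot\e\nm{\nabla_x\fe}^2\ls\e\,\ee$. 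This is a one-power-of-$\e$ bookkeeping error, not a structural gap; the rest of your argument is sound and matches the paper.
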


\begin{proof}
In \eqref{re-f''}, we take the $L^2$ inner product with
$w_1^2\nabla_x(\ik-\pk)[\fe]$ and integrate by parts to obtain
\begin{align}\label{tt 04'}
    & \br{\dt\Big(\nabla_x(\ik-\pk)[\fe]\Big),w_1^2\nabla_x(\ik-\pk)[\fe]}+{\hp\cdot\nx\Big(\nabla_x(\ik-\pk)[\fe]\Big),w_1^2\nabla_x(\ik-\pk)[\fe]}\\
    &+\frac{1}{\e}\br{\li\Big[\nabla_x(\ik-\pk)[\fe]\Big],w_1^2\nabla_x(\ik-\pk)[\fe]}\no\\
    =&\br{\e^{k-1}\nabla_x\Gamma[\fe,\fe],w_1^2\nabla_x(\ik-\pk)[\fe]}\no\\ &+\br{\sum_{i=1}^{2k-1}\e^{i-1} \Big\{\nabla_x\Gamma\big[\mhh F_i,\fe\big]
    +\nabla_x\Gamma\big[\fe,\mhh F_i\big]\Big\},w_1^2\nabla_x(\ik-\pk)[\fe]}\no\\
    &-\br{\nabla_x\bigg(\mhh\Big(\dt\mh+\hp\cdot\nx\mh\Big)\Big((\ik-\pk)[\fe]\Big)\bigg),w_1^2\nabla_x(\ik-\pk)[\fe]}\no\\
    &+\br{\nabla_x\Big((\ik-\pk)[\sb]\Big),w_1^2\nabla_x(\ik-\pk)[\fe]}+\br{\nabla_x\Big(\jump{{\bf P},\tau}[\fe]\Big),w_1^2\nabla_x(\ik-\pk)[\fe]}\no\\
    &+\frac{1}{\e}\br{\jump{\li,\nabla_x}\big[(\ik-\pk)[\fe]\big],w_1^2\nabla_x(\ik-\pk)[\fe]}.\no
\end{align}

\paragraph{L.H.S. of \eqref{tt 04'}:}
Based on a similar argument as in the proof of Proposition \ref{basic 0'}, we have
\begin{align}\label{tt 05'}
    \text{L.H.S. of \eqref{tt 04'}}\geq&\,\frac{1}{2}\frac{\ud}{\ud t}\btnmww{\nabla_x(\ik-\pk)[\fe]}^2+\e^{-1}\delta\bnmsww{\nabla_x(\ik-\pk)[\fe]}^2\\
    &\,+\yy\btnmww{\sqrt{p^0}(\ik-\pk)[\nabla_x\fe]}^2-C\e^{-1}\bnms{\nabla_x(\ik-\pk)[\fe]}^2.\no
\end{align}

\paragraph{First Term on the R.H.S. of \eqref{tt 04'}:}
Using Lemma \ref{ss 02}, Lemma \ref{ss 05} for $p$ integral, $(\infty,2,2)$ or $(4,4,2)$ for $x$ integral, Sobolev embedding and \eqref{tt 01'}, we have
\begin{align}
    &\,\abs{\br{\e^{k-1}\nabla_x\Gamma[\fe,\fe],w_1^2\nabla_x(\ik-\pk)[\fe]}}\\
    \ls&\,\e^{k-1}\int_{x\in\r^3}\Big[\tbs{w_1\nabla_x\fe}\abss{\fe}+\abss{w_1\nabla_x\fe}\tbs{\fe}+\tbs{w_1\fe}\abss{\nabla_x\fe}+\abss{w_1\fe}\tbs{\nabla_x\fe}\no\\
    &\,+\zzz\big(\tbs{w_1\fe}\abss{\fe}+\abss{w_1\fe}\tbs{\fe}\big)\Big]\abss{w_1\nabla_x(\ik-\pk)[\fe]}\no\\
    \ls&\,\e^{k-1}\Big(\nm{\fe}_{H^2}\nm{w_1\fe}_{H^1_{\sigma}}+\nm{\fe}_{H^2_{\sigma}}\nm{w_1\fe}_{H^1}\Big)\nmsww{\nabla_x(\ik-\pk)[\fe]}\no\\
    \ls&\, \Big(\e^{\frac{1}{2}}\nm{w_1\fe}_{H^1_{\sigma}}+\e\nm{\fe}_{H^2_{\sigma}}\Big)\nmsww{\nabla_x(\ik-\pk)[\fe]}\no\\
    \ls&\,o(1)\e^{-1}\nmsww{\nabla_x(\ik-\pk)[\fe]}^2+\e^2\nm{w_1\fe}_{H^1_{\sigma}}^2+\e^3\nm{\fe}_{H^2_{\sigma}}^2\no\\
    \ls&\,o(1)\e^{-1}\nmsww{\nabla_x(\ik-\pk)[\fe]}^2+\e^2\nm{w_1\pk[\fe]}_{H^1_{\sigma}}^2+\e^2\nm{w_1(\ik-\pk)[\fe]}_{H^1_{\sigma}}^2+\e^3\nm{\pk[\fe]}_{H^2_{\sigma}}^2+\e^3\nm{(\ik-\pk)[\fe]}_{H^2_{\sigma}}^2\no\\
    \ls&\,o(1)\e^{-1}\nmsww{\nabla_x(\ik-\pk)[\fe]}^2+\e^2\nmsw{(\ik-\pk)[\fe]}^2+\e^3\nm{(\ik-\pk)[\fe]}_{H^2_{\sigma}}^2+\e^2\nm{\fe}_{H^2}^2.\no
\end{align}

\paragraph{Second Term on the R.H.S. of \eqref{tt 04'}:}
Based on a similar argument, we have
\begin{align}
    &\abs{\br{\sum_{i=1}^{2k-1}\e^{i-1} \Big\{\nabla_x\Gamma\big[\mhh F_i,\fe\big]
    +\nabla_x\Gamma\big[\fe,\mhh F_i\big]\Big\},w_1^2\nabla_x(\ik-\pk)[\fe]}}\\
    \ls&\,o(1)\e^{-1}\bnm{(\ik-\pk)[\fe]}_{H^1_{w,\sigma}}^2+\e\nm{\fe}^2_{H^1_{w,\sigma}}
    \ls\, o(1)\e^{-1}\bnm{(\ik-\pk)[\fe]}_{H^1_{w,\sigma}}^2+\e\nm{\fe}^2_{H^1}\no.
\end{align}

\paragraph{Third Term on the R.H.S. of \eqref{tt 04'}:}
If $\nabla_x$ hits $\mhh\big(\dt\mh+\hp\cdot\nx\mh\big)$, we know that it is bounded by
\begin{align}
&\abs{\br{\Big(\frac{\zzz}{T_0^2}+\zz\Big)(p^0)^2\Big((\ik-\pk)[\fe]\Big),w_1^2\nabla_x(\ik-\pk)[\fe]}}\no\\
\ls&\, o(1)\e^{-1}\nmsww{\nabla_x(\ik-\pk)[\fe]}^2
    +\e\nmsw{(\ik-\pk)[\fe]}^2.\no
\end{align}
Then if $\nabla_x$ hits $(\ik-\pk)[\fe]$, we know that it is bounded by
\begin{align}
    \br{\zz p^0\Big(\nabla_x(\ik-\pk)[\fe]\Big)\bigg),w_1^2\nabla_x(\ik-\pk)[\fe]}=\zz\tnmww{\sqrt{p^0}\nabla_x(\ik-\pk)[\fe]}^2.
\end{align}
In total, the third term on the R.H.S. of \eqref{tt 04'} is bounded by
\begin{align}
    \zz\tnmww{\sqrt{p^0}\nabla_x(\ik-\pk)[\fe]}^2+C o(1)\e^{-1}\tnmww{\nabla_x(\ik-\pk)[\fe]}^2
    +C\e\tnmw{(\ik-\pk)[\fe]}^2.
\end{align}

\paragraph{Fourth Term on the R.H.S. of \eqref{tt 04'}:}
Using Cauchy's inequality, we know that
\begin{align}
    \abs{\br{\nabla_x\Big((\ik-\pk)[\sb]\Big),w_1^2\nabla_x(\ik-\pk)[\fe]}}\ls o(1)\e^{-1}\nmsww{\nabla_x(\ik-\pk)[\fe]}^2+\e^{2k+3}.
\end{align}

\paragraph{Fifth Term on the R.H.S. of \eqref{tt 04'}:}

\begin{align*}
    \pk[\fe]:=&\;\mh\Big(a^{\e}(t,x)+p\cdot b^{\e}(t,x)+p^0c^{\e}(t,x)\Big),\\
    \jump{\pk,\partial_t}[\fe]=&\;-\Big(a^{\e}(t,x)+p\cdot b^{\e}(t,x)+p^0c^{\e}(t,x)\Big)\partial_t\mh.
\end{align*}
If $\nabla_x$ hits the Maxwellian in $\jump{\pk,\tau}$,
we know that it is bounded by
\begin{align}
    \zzz\nm{ \nabla_x\fe}\nm{\nabla_x(\ik-\pk)[\fe]}+ \zzz\nm{ \fe}\nm{\nabla_x(\ik-\pk)[\fe]}\ls o(1) \e^{-1}\tnmww{\nabla_x(\ik-\pk)[\fe]}^2+\e\nm{\fe}^2_{H^1}.
\end{align}
If $\nabla_x$ hits $\fe$, then we know that it is bounded by
\begin{align}
\nm{ \nabla_x^2\fe}\bnm{\nabla_x(\ik-\pk)[\fe]}
    \ls \e^{-1}\tnm{\nabla_x(\ik-\pk)[\fe]}^2+\e\tnm{\nabla_x^2\fe}^2.\no
\end{align}
In total, we know the fifth term can be bounded by
\begin{align}
    \e^{-1}\btnm{\nabla_x(\ik-\pk)[\fe]}^2+\e\zz\tnm{\nabla_x^2\fe}+\e\nm{\fe}^2_{H^2}.
\end{align}

\paragraph{Sixth Term on the R.H.S. of \eqref{tt 04'}:}
Since $\jump{\li,\nabla_x}$ only contains terms that hit $\li$, we have
\begin{align}
    &\abs{\frac{1}{\e}\br{\jump{\li,\nabla_x}\Big[(\ik-\pk)[\fe]\Big],w_1^2\nabla_x(\ik-\pk)[\fe]}}\\
    \ls&\,\e^{-1}\abs{\br{\zzz p^0\Big[(\ik-\pk)[\fe]\Big],w_1^2\nabla_x(\ik-\pk)[\fe]}}\no\\
    \ls&\,o(1)\e^{-1}\nmsww{\nabla_x(\ik-\pk)[\fe]}^2+\e^{-1}\zzz\nmsw{(\ik-\pk)[\fe]}^2.\no
\end{align}

\paragraph{R.H.S. of \eqref{tt 04'}:} In total, we have
\begin{align}\label{tt 06'}
    \text{R.H.S. of \eqref{tt 04'}}\leq&\, C o(1)\e^{-1}\nmsww{\nabla_x(\ik-\pk)[\fe]}^2+\zz\tnmww{\sqrt{p^0}\nabla_x(\ik-\pk)[\fe]}^2\\
    &+C\Big(\e^{-1}\zzz\nmsw{(\ik-\pk)[\fe]}^2+\e^3\nm{(\ik-\pk)[\fe]}_{H^2_{\sigma}}^2+\e\nm{\fe}_{H^2}^2+\e^{2k+3}\Big).\no
\end{align}

\paragraph{Summary:}
Combining \eqref{tt 05'} and \eqref{tt 06'}, for $\zz\leq\frac{1}{2}\yy$,
we get \eqref{w1x0}
\end{proof}

\subsection{Weighted Second-Order Derivative Estimate}

In this subsection, we continue to derive the weighted estimates of $\nabla_x^2\fe$.

\begin{remark}
Here we do not apply $\ik-\pk$ projection since the commutator $\jump{\pk,\tau}$ involves one more spatial derivative, so we cannot allow such commutator for the highest-order energy-dissipation structure. However, since there is no $\ik-\pk$ projection, here we have to assume that the weighted version will be multiplied by $\e^{1+m}$ compared with no-weight version (see the nonlinear term estimate).
\end{remark}

\begin{proposition}\label{basic 2'}
For the remainder $\fe$, it holds that
\begin{align}
    &\frac{\ud}{\ud t}\tnmwww{\nabla_x^2\fe}^2+\e^{-1}\delta\nmswww{\nabla_x^2(\ik-\pk)[\fe]}^2+\yy\tnmwww{\sqrt{p^0}\nabla_x^2\fe}^2\\
    \ls&\,\e^{-1}\nm{\fe}_{H^2}^2+\e^{-1}\zzz\nmsww{\nabla_x(\ik-\pk)[\fe]}^2+\e^3\nmsww{\nx(\ik-\pk)[\fe]}^2\no\\
    &+\e^{-1}\zzz\nmsw{(\ik-\pk)[\fe]}^2+\e^3\nmsw{(\ik-\pk)[\fe]}^2+\e^{2k+3}.\no
\end{align}
Thus, we have
\begin{align}\label{estimate 2'}
    &\e^{3}\bigg(\frac{\ud}{\ud t}\tnmwww{\nabla_x^2\fe}^2+\e^{-1}\delta\nmswww{\nabla_x^2(\ik-\pk)[\fe]}^2+\yy\tnmwww{\sqrt{p^0}\nabla_x^2\fe}^2\bigg)\\
    \ls&\,\e^2\nm{\nabla_x^2\fe}^2+\e\big(\ee+\dd\big)+\e^{2k+6}.\no
\end{align}
\end{proposition}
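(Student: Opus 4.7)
The plan is to pair \eqref{fx'} (before applying $\ik-\pk$) with $w_2^2 \nabla_x^2 \fe$ in $L^2(\R^3_x\times\R^3_p)$ and then bound each resulting term. On the left, the time-derivative contribution splits as
\begin{align*}
\br{\dt \nabla_x^2 \fe, w_2^2 \nabla_x^2 \fe} = \frac{1}{2}\frac{\ud}{\ud t}\tnmwww{\nabla_x^2\fe}^2 + \yy\tnmwww{\sqrt{p^0}\nabla_x^2\fe}^2,
\end{align*}
since $\partial_t w_2 = -\yy p^0 w_2$. The transport term vanishes after integrating by parts because $\hat{p}$ and $w_2$ are both $x$-independent. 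Lemma~\ref{ss 04} applied to $\nabla_x^2\fe$ yields $\frac{1}{\e}\br{\li[\nabla_x^2\fe], w_2^2\nabla_x^2\fe} \ge \frac{\delta}{\e}\nmswww{\nabla_x^2\fe}^2 - \frac{C}{\e}\nms{\nabla_x^2\fe}^2$; the macro part $\nabla_x^2\pk[\fe]$ carries Maxwellian decay, so by \eqref{tt 01'} $\nmswww{w_2\nabla_x^2\pk[\fe]}\ls\nm{\nabla_x^2\fe}+\zzz\nm{\fe}_{H^1}$, which lets us replace the full derivative by $\nabla_x^2(\ik-\pk)[\fe]$ in the dissipation, up to errors absorbed by $\e^{-1}\nm{\fe}_{H^2}^2$ on the right.

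For the right-hand side of \eqref{fx'}, I would treat the terms in the same order as in the proofs of Propositions~\ref{basic 1'} and~\ref{basic 2}. The nonlinear term $\e^{k-1}\bbr{\nabla_x^2\Gamma[\fe,\fe], w_2^2\nabla_x^2\fe}$ is handled by Leibniz, Lemma~\ref{ss 05}, $(4,4,2)$ Hölder, and Sobolev embedding; the factor $\e^{k-1}$ with $k\ge 2$ compensates for the fact that we cannot peel off $(\ik-\pk)$ from $\nabla_x^2\fe$. The cross terms involving $\mhh F_i$ are absorbed into $o(1)\e^{-1}\nmswww{\nabla_x^2(\ik-\pk)[\fe]}^2$ plus $\e^3\nmsww{\nabla_x(\ik-\pk)[\fe]}^2$-scale remainders, using the rapid $p$-decay of $F_i$. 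The source $\nabla_x^2\sb$ is handled by Cauchy-Schwarz and $\nm{\sb}\ls\e^{k+2}$, producing $\e^{2k+3}$. The commutator $\frac{1}{\e}\jump{\li,\nabla_x^2}[\fe]$ is expanded by Leibniz; because each derivative on $\li$ lands on the Maxwellian coefficient and produces a bounded polynomial in $p^0$ times $\zzz$, the resulting inner product is bounded by $o(1)\e^{-1}\nmswww{\nabla_x^2(\ik-\pk)[\fe]}^2 + \e^{-1}\zzz\nmsww{\nabla_x(\ik-\pk)[\fe]}^2 + \e^{-1}\zzz\nmsw{(\ik-\pk)[\fe]}^2$.

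The main obstacle is the momentum-growth term $-\nabla_x^2\bigl(\mhh(\dt\mh+\hat{p}\cdot\nx\mh)\fe\bigr)$. Here, unlike the first-order case, we have no $(\ik-\pk)$ projection, so the worst contribution, where both derivatives land on $\fe$, produces $\zz\br{p^0\nabla_x^2\fe, w_2^2\nabla_x^2\fe} = \zz\tnmwww{\sqrt{p^0}\nabla_x^2\fe}^2$. This term cannot be absorbed by either the weighted microscopic dissipation $\e^{-1}\nmswww{\nabla_x^2(\ik-\pk)[\fe]}^2$ or the $L^2$ norm of $\nabla_x^2\fe$. The rescue comes from the Y-dissipation $\yy\tnmwww{\sqrt{p^0}\nabla_x^2\fe}^2$ produced by the time-dependent weight, combined with the running assumption $\zz\le \tfrac12\yy$ from \eqref{assump}, which exactly absorbs $\zz\tnmwww{\sqrt{p^0}\nabla_x^2\fe}^2$ into the LHS. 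The remaining contributions, where at least one derivative falls on the Maxwellian coefficient, generate $(p^0)^{k}\zzz$ factors with $k\le 3$ multiplied by lower-order derivatives of $\fe$; using \eqref{tt 01'} and splitting $\fe=\pk[\fe]+(\ik-\pk)[\fe]$, these are absorbed into $\e^{-1}\zzz\nm{\fe}_{H^2}^2$ and $\e^{-1}\zzz\nmsww{\nabla_x(\ik-\pk)[\fe]}^2+\e^{-1}\zzz\nmsw{(\ik-\pk)[\fe]}^2$.

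Finally, moving the small multiples of $\e^{-1}\nmswww{\nabla_x^2(\ik-\pk)[\fe]}^2$ and $\zz\tnmwww{\sqrt{p^0}\nabla_x^2\fe}^2$ to the left and collecting the above estimates yields the stated bound, and multiplying by $\e^3$ and identifying each piece as an $\e\,\ee$ or $\e\,\dd$ contribution gives \eqref{estimate 2'}.
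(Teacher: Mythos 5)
Your proposal follows the paper's proof essentially verbatim: both pair \eqref{fx'} with $w_2^2\nabla_x^2\fe$ \emph{without} applying the $\ik-\pk$ projection (since the commutator $\jump{\pk,\tau}$ would cost one more derivative), extract the $\yy\tnmwww{\sqrt{p^0}\nabla_x^2\fe}^2$ dissipation from the time-dependent weight, invoke Lemma~\ref{ss 04} for the linearized collision term, estimate the nonlinear, cross, source, and commutator contributions in the same way, and close by absorbing the momentum-growth term $\zz\tnmwww{\sqrt{p^0}\nabla_x^2\fe}^2$ into the left-hand $\yy$-term via the hypothesis $\zz\le\tfrac12\yy$ from \eqref{assump}. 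The only cosmetic difference is that you work with the full $\nabla_x^2\fe$ in the $\sqrt{p^0}$-weighted term rather than $\nabla_x^2(\ik-\pk)[\fe]$, but since the macroscopic piece with $w_2\sqrt{p^0}$ is controlled by $\nm{\fe}_{H^2}$ via the Maxwellian decay \eqref{tt 01'}, the two formulations are equivalent.
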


\begin{proof}
In \eqref{fx'}, we take the $L^2$ inner product with
$w_2^2\nabla_x^2\fe$ and integrate by parts to obtain
\begin{align}\label{tt 04''}
    & \br{\dt\big(\nabla_x^2\fe\big),w_2^2\nabla_x^2\fe}+{\hp\cdot\nx\big(\nabla_x^2\fe\big),w_2^2\nabla_x^2\fe}+\frac{1}{\e}\br{\li\big[\nabla_x^2\fe\big],w_2^2\nabla_x^2\fe}\\
    =&\br{\e^{k-1}\nabla_x^2\Gamma[\fe,\fe],w_2^2\nabla_x^2\fe}+\br{\sum_{i=1}^{2k-1}\e^{i-1} \Big\{\nabla_x^2\Gamma\big[\mhh F_i,\fe\big]
    +\nabla_x^2\Gamma\big[\fe,\mhh F_i\big]\Big\},w_2^2\nabla_x^2\fe}\no\\
    &-\br{\nabla_x^2\bigg(\mhh\Big(\dt\mh+\hp\cdot\nx\mh\Big)\fe\bigg),w_2^2\nabla_x^2\fe}+\br{\nabla_x^2\sb,w_2^2\nabla_x^2\fe}+\frac{1}{\e}\br{\jump{\li,\nabla_x^2}[\fe],w_2^2\nabla_x^2\fe}.\no
\end{align}

\paragraph{L.H.S. of \eqref{tt 04''}:}
\begin{align}\label{tt 05''}
    \text{L.H.S. of \eqref{tt 04''}}\geq&\,\frac{1}{2}\frac{\ud}{\ud t}\tnmwww{\nabla_x^2\fe}^2+\e^{-1}\delta\nmswww{\nabla_x^2(\ik-\pk)[\fe]}^2\\
    &\,+\yy\btnmwww{\sqrt{p^0}\nabla_x^2\fe}^2-C\e^{-1}\nm{\fe}_{H^2}^2.\no
\end{align}

\paragraph{First Term on the R.H.S. of \eqref{tt 04''}:}
Using Lemma \ref{ss 02}, Lemma \ref{ss 05} for $p$ integral, $(\infty,2,2)$ or $(4,4,2)$ for $x$ integral, Sobolev embedding and \eqref{tt 01'}, we have
\begin{align}
    &\abs{\br{\e^{k-1}\nabla_x^2\Gamma[\fe,\fe],w_2^2\nabla_x^2\fe}}\\
    \ls&\abs{\e^{k-1}\br{\Gamma[\nabla_x^2\fe,\fe]+\Gamma[\fe,\nabla_x^2\fe]+\Gamma[\nabla_x\fe,\nabla_x\fe],w_2^2\nabla_x^2\fe}}\no\\
    &+\zzz\abs{\e^{k-1}\br{\Gamma[\nabla_x\fe,\fe]+\Gamma[\fe,\nabla_x\fe],w_2^2\nabla_x^2\fe}}+\zzz\abs{\e^{k-1}\br{\Gamma[\fe,\fe],w_2^2\nabla_x^2\fe}}\no\\
    \ls&\,\e^{k-1}\int_{x\in\r^3}\Big[\abss{w_2\nabla_x^2\fe}\tbs{\fe}+\tbs{w_2\nabla_x^2\fe}\abss{\fe}+\abss{w_2\fe}\tbs{\nabla_x^2\fe}+\tbs{w_2\fe}\abss{\nabla_x^2\fe}\no\\
    &+\abss{w_2\nabla_x\fe}\tbs{\nabla_x\fe}+\tbs{w_2\nabla_x\fe}\abss{\nabla_x\fe}+\zzz\Big(\abss{w_2\nabla_x\fe}\tbs{\fe}+\tbs{w_2\nabla_x\fe}\abss{\fe}\no\\
    &+\abss{w_2\fe}\tbs{\nabla_x\fe}+\tbs{w_2\fe}\abss{\nabla_x\fe}+\abss{w_2\fe}\tbs{\fe}+\tbs{w_2\fe}\abss{\fe}\Big)\Big]\abss{w_2\nabla_x^2\fe}\no\\
    \ls&\,\e^{k-1}\Big(\nm{\fe}_{H^2}\nm{w_2\fe}_{H^2_{\sigma}}+\nm{\fe}_{H^2_{\sigma}}\nm{w_2\fe}_{H^2}\Big)\nmswww{\nabla_x^2\fe}\no\\
    \ls&\, \Big(\e^{\frac{1}{2}}\nm{\fe}_{H^2_{w,\sigma}}+\nm{\fe}_{H^2_{\sigma}}\Big)\nmswww{\nabla_x^2\fe}\no\\
    \ls&\nm{(\ik-\pk)[\fe]}^2_{H^2_{w,\sigma}}+\nm{\fe}_{H^2}^2.\no
\end{align}

\paragraph{Second Term on the R.H.S. of \eqref{tt 04''}:}
Based on a similar argument, we have
\begin{align}
    &\abs{\br{\sum_{i=1}^{2k-1}\e^{i-1} \Big\{\nabla_x^2\Gamma\big[\mhh F_i,\fe\big]
    +\nabla_x^2\Gamma\big[\fe,\mhh F_i\big]\Big\},w_2^2\nabla_x^2\fe}}\\
    \ls&\nmswww{\nabla_x^2\fe}\nm{\fe}_{H^2_{w,\sigma}}\ls \bnm{(\ik-\pk)[\fe]}^2_{H^2_{w,\sigma}}+\nm{\fe}_{H^2}^2.\no
\end{align}

\paragraph{Third Term on the R.H.S. of \eqref{tt 04''}:}
For
\begin{align}
\abs{\br{\nabla_x^2\bigg(\mhh\Big(\dt\mh+\hp\cdot\nx\mh\Big)\fe\bigg),w_2^2\nabla_x^2\fe}}
\end{align}
Similar to previous analysis, $\nabla_x^2$ hits $\mhh\Big(\dt\mh+\hp\cdot\nx\mh\Big)$ or $\fe$, then we know that it is bounded by
\begin{align}
    &\zz\tnmwww{\sqrt{p^0}\nabla_x^2(\ik-\pk)[\fe]}^2+C\Big(\zzz\nmsww{\nabla_x(\ik-\pk)[\fe]}^2+\zzz\nmsw{(\ik-\pk)[\fe]}^2+\nm{\fe}_{H^2}^2\Big).
\end{align}

\paragraph{Fourth Term on the R.H.S. of \eqref{tt 04''}:}
Using Cauchy's inequality, we know that
\begin{align}
    \abs{\br{\nabla_x^2\sb,w_2^2\nabla_x^2\fe}}\ls o(1)\e^{-1}\bnm{(\ik-\pk)[\fe]}^2_{H^2_{w,\sigma}}+o(1)\e^{-1}\nm{\fe}_{H^2}^2+\e^{2k+3}.
\end{align}

\paragraph{Fifth Term on the R.H.S. of \eqref{tt 04''}:}
Note that $\jump{\li,\nabla_x^2}[\fe]$ contains terms that $\nabla_x$ hits $\fe$ at most once. Then we have
\begin{align}
    &\abs{\frac{1}{\e}\br{\jump{\li,\nabla_x^2}[\fe],w_2^2\nabla_x^2\fe}}\\
    \ls&\,o(1)\e^{-1}\nmswww{\nabla_x^2(\ik-\pk)[\fe]}^2+\e^{-1}\zzz\nmsww{\nabla_x(\ik-\pk)[\fe]}^2+\e^{-1}\zzz\nmsw{(\ik-\pk)[\fe]}^2+\e^{-1}\nm{\fe}_{H^2}^2.\no
\end{align}

\paragraph{R.H.S. of \eqref{tt 04''}:} In total, we have
\begin{align}\label{tt 06''}
    \text{R.H.S. of \eqref{tt 04''}}\leq&\,C o(1)\e^{-1}\nmswww{\nabla_x^2(\ik-\pk)[\fe]}^2+\zz\tnmwww{\sqrt{p^0}\nabla_x^2(\ik-\pk)[\fe]}^2\\
    &+C\Big[\e^{-1}\nm{\fe}_{H^2}^2+\big(\e^{-1}\zzz+1\big)\nmsww{\nabla_x(\ik-\pk)[\fe]}^2\no\\
    &+\big(\e^{-1}\zzz+1\big)\nmsw{(\ik-\pk)[\fe]}^2+\e^{2k+3}\Big].\no
\end{align}

\paragraph{Summary:}
Combining \eqref{tt 05''} and \eqref{tt 06''}, we have
\begin{align}
    &\frac{\ud}{\ud t}\tnmwww{\nabla_x^2\fe}^2+\e^{-1}\delta\nmswww{\nabla_x^2(\ik-\pk)[\fe]}^2+\yy\tnmwww{\sqrt{p^0}\nabla_x^2\fe}^2\\
    \ls&\,\e^{-1}\nm{\fe}_{H^2}^2+\big(\e^{-1}\zzz+1\big)\nmsww{\nabla_x(\ik-\pk)[\fe]}^2+\big(\e^{-1}\zzz+1\big)\nmsw{(\ik-\pk)[\fe]}^2+\e^{2k+3}.\no
\end{align}
\end{proof}

\bigskip

\section{Macroscopic Dissipation} \label{Sec:macro-dissipation}

In this section, we study the macroscopic structure of \eqref{re-f}.

As in \eqref{hydro}, denote
\begin{align}\label{hydro1}
\pk[\fe]:=\mh\Big(a^{\e}(t,x)+p\cdot b^{\e}(t,x)+p^0c^{\e}(t,x)\Big)\in\mathcal{N}.
\end{align}
\begin{proposition}\label{md00}
There are two functionals $\mathcal{E}^{mac}_i$ for $i=1,2$ satisfying
\begin{align}
   \ee^{mac}_i\ls \tnm{\nabla_x^{i-1}\fe}\tnm{\nabla_x\fe},
\end{align}
such that
\begin{align}\label{md}
    \tnm{\nabla_x^i\pk[\fe]}^2\leq  \frac{\ud}{\ud t}\mathcal{E}^{mac}_i+C\Big(\e^{-2}\nms{\nabla_x^{i-1}(\ik-\pk)[\fe]}^2+\nms{\nabla_x^{i}(\ik-\pk)[\fe]}^2+\zzz\nm{\fe}^2_{H^{i-1}}+\e^{2k+2}\Big).
\end{align}
Thus, we have
\begin{align}\label{semp 5}
   & -\frac{\ud}{\ud t}\Big(\e\ee^{mac}_1+\e^2\ee^{mac}_2\Big)+\Big(\e\tnm{\nabla_x\pk[\fe]}^2+\e^2\tnm{\nabla_x^2\pk[\fe]}^2\Big)\\
    \ls&\, \e^{-1}\nms{(\ik-\pk)[\fe]}^2 +\nms{\nabla_x(\ik-\pk)[\fe]}^2+\e\big(\ee+\dd)+\e^{2k+3}.
\end{align}
\end{proposition}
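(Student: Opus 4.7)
The plan is to follow the standard macroscopic-equation strategy from Guo's $L^2$ framework (as in \cite{Guo2006}, adapted to the relativistic setting as in \cite{Liu.Zhao2014} and \cite{Strain.Guo2004}), modified to handle the local Maxwellian. First, I would extract the local conservation laws. Testing \eqref{re-f} against the collision invariants $\mh,\ p_i\mh,\ p^0\mh$ kills the $\frac{1}{\e}\li[\fe]$ term (since $\mathcal{N}=\operatorname{span}\{\mh,p_i\mh,p^0\mh\}$) and produces three evolution equations of schematic form
\begin{align*}
    \dt a^{\e}+\nx\cdot\big(\text{linear in }b^{\e}\big)&=\nx\cdot(\text{microscopic})+(\text{local Maxwellian drift}),\\
    \dt b^{\e}+\nx a^{\e}+\nx c^{\e}+\nx\cdot(\cdots)&=\nx\cdot(\text{microscopic})+(\text{local Maxwellian drift}),\\
    \dt c^{\e}+\nx\cdot\big(\text{linear in }b^{\e}\big)&=\nx\cdot(\text{microscopic})+(\text{local Maxwellian drift}),
\end{align*}
where the microscopic terms are moments of $(\ik-\pk)[\fe]$ (each giving an $\e^{-1}\nms{(\ik-\pk)[\fe]}$-type factor after integration by parts in the energy) and the drift terms are bounded by $\zzz\tnm{\fe}$ because $\mhh\p_{t,x}\mh$ carries a factor of $\nx(n_0,u,T_0)$.

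Next, I would extract an elliptic-type control on $\nx\pk[\fe]$. Testing \eqref{re-f} against the higher moments $p_ip_j\mh,\ p^0p_i\mh,\ (p^0)^2\mh$ (the remaining members of the Grad 14-moment basis) yields equations in which $\nx a^{\e},\ \nx b^{\e},\ \nx c^{\e}$ appear explicitly on the left, with $\dt(\text{microscopic moment})+\text{microscopic}$ on the right. Following \cite[Lemma 6.1]{Guo2006}, I would integrate one of these equations against an auxiliary test function $\Psi^{\e}$ constructed from $(a^{\e},b^{\e},c^{\e})$ (or their derivatives) so that the leading term reproduces $\tnm{\nx\pk[\fe]}^2$, while the time derivative on the right assembles into $\frac{\ud}{\ud t}\ee^{mac}_1$ with $\ee^{mac}_1\ls \tnm{\fe}\tnm{\nx\fe}$. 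All remaining right-hand side pieces are bounded, via Cauchy's inequality, by
\[
\tfrac{1}{2}\tnm{\nx\pk[\fe]}^2+C\big(\e^{-2}\nms{(\ik-\pk)[\fe]}^2+\nms{\nx(\ik-\pk)[\fe]}^2+\zzz\tnm{\fe}^2+\e^{2k+2}\big),
\]
where the factor $\e^{-2}$ comes from applying $\nx$ inside the $\e^{-1}\li$ term, and the $\e^{2k+2}$ absorbs $\tnm{\sb}^2$. Absorbing $\tfrac{1}{2}\tnm{\nx\pk[\fe]}^2$ to the left gives \eqref{md} for $i=1$.

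For $i=2$, I would apply $\nx$ to \eqref{re-f} first and repeat the procedure: take the $\nx$-derivative conservation laws (giving equations for $\nx(a^{\e},b^{\e},c^{\e})$), test against the higher moments multiplied by $\nx\pk[\fe]$-type auxiliary functions $\Psi^{\e}_1$ with $\ee^{mac}_2\ls\tnm{\nx\fe}\tnm{\nx\fe}$, and produce the same structure but with one more derivative. The commutators $\jump{\pk,\nx}$ and $\jump{\li,\nx}$ that appear are exactly the local-Maxwellian drifts analyzed already, bounded by $\zzz\nm{\fe}_{H^1}^2$. Finally, multiplying the $i=1$ inequality by $\e$ and the $i=2$ inequality by $\e^2$ and summing yields \eqref{semp 5}, once one notes that $\e^{-2}\cdot\e=\e^{-1}$ and $\e^{-2}\cdot\e^2=1$ on the microscopic dissipation terms, and $\zzz\nm{\fe}^2_{H^1}\ll\ee$.

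The main obstacle will be the bookkeeping of the local-Maxwellian drift terms. Unlike the global Maxwellian case of \cite{Strain.Guo2004}, every integration by parts in $x$ falling on $\mh$ produces factors of $\nx(n_0,u,T_0)\cdot p^0$, which must be absorbed into the $\zzz\nm{\fe}^2_{H^{i-1}}$ bucket rather than the dissipation; this is what forces the smallness assumption \eqref{semp 3'} to appear in the hierarchy. A secondary technical issue is designing $\Psi^{\e}$ and $\Psi^{\e}_1$ so that their $L^2$-norms are genuinely controlled by $\tnm{\nx^{i-1}\pk[\fe]}$ (hence by $\tnm{\nx^{i-1}\fe}$ up to the microscopic error), which in turn validates $\ee^{mac}_i\ls \tnm{\nx^{i-1}\fe}\tnm{\nx\fe}$; I expect this to reduce, as in \cite{Guo2006}, to solving Poisson-type problems in $x\in\r^3$ for auxiliary scalar potentials whose $\dot H^1$-norm is bounded by the macroscopic quantities themselves.
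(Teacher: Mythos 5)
Your proposal follows essentially the same route as the paper's proof: local conservation laws from testing against $\{\mh,p_i\mh,p^0\mh\}$, a 13/14-moment decomposition of the remainder equation to extract equations for $(a^\e,b^\e,c^\e)$ and their spatial derivatives, and then elliptic estimates on the macroscopic system where the time-derivative of the microscopic moments is absorbed into $\frac{\ud}{\ud t}\ee^{mac}_i$, all in the style of \cite[Lemma 6.1]{Guo2006}. The paper extracts the moment equations by comparing coefficients of $\{\mh,p_i\mh,p^0\mh,\frac{p_i}{p^0}\mh,\frac{p_i^2}{p^0}\mh,\frac{p_ip_j}{p^0}\mh\}$ rather than literally testing against $p_ip_j\mh,\ p^0p_i\mh,\ (p^0)^2\mh$, but this is cosmetic. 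One small inaccuracy: the factor $\e^{-2}$ does not come from ``applying $\nx$ inside the $\e^{-1}\li$ term''; it is simply what results from the Cauchy--Schwarz step applied to the $\tfrac{1}{\e}\li[\fe]$ contribution to $\ell^\e$ (squaring the $\e^{-1}$), which is already present in the $i=1$ case before any $\nx$ is applied. This does not affect the validity of the overall argument.
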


\begin{proof}
We will prove this proposition following a similar argument as in \cite[Lemma 6.1]{Guo2006}. This argument consists of two key ingredients: local conservation laws and the macroscopic equations of $\fe$.
For convenience, we write \eqref{re-f} as
\begin{align}\label{re-f1}
    &\dt\fe+\hp\cdot\nabla_x\fe
    +\frac{1}{\e}\li[\fe] =\bar{h}^{\e},
\end{align}
where
\begin{align}
\\
   \bar{h}^{\e}=\e^{k-1}\Gamma[\fe,\fe] +\sum_{i=1}^{2k-1}\e^{i-1} \Big\{\Gamma\big[\mhh F_i,\fe\big]
    +\Gamma\big[\fe,\mhh F_i\big]\Big\}-\mhh\big(\dt\mh+\hp\cdot\nx\mh\big)\fe+\sb.\no
\end{align}

\paragraph{Local conservation laws:}
Firstly, we derive the local conservation laws of $a^{\e}, b^{\e}, c^{\e}$.
Projecting \eqref{re-f1} onto the null space $\mathcal{N}$, we can obtain equations similar to \eqref{number},  \eqref{moment} and \eqref{energy}. Using the first equation for the relativistic Euler equations \eqref{re}
\begin{align}
    \partial_t(n_0u^0)+\nabla_x(n_0u)=0,
\end{align}
we can write the conservation laws as
\begin{align}
   &\;n_0u^0\partial_ta^{\e}+e_0(u^0)^2\partial_tc^{\e}+P_0\nabla_x\cdot b^{\e}\\
   =&\;\Xi_1\big[a^{\e},b^{\e},c^{\e}\big]-\nabla_x\cdot\int_{\mathbb R^3}  \hat{p}\sqrt{\mathbf{M}}(\ik-\pk)[\fe]\,\ud p+\int_{\mathbb R^3}  \sqrt{\mathbf{M}}\bar{h}^{\e}\,\ud p,\no\\
   &\;\frac{n_0u^0K_3(\gamma)}{\gamma K_2(\gamma)}\partial_t b^{\e}+P_0\nabla_xa^{\e}+\frac{n_0u^0K_3(\gamma)}{\gamma K_2(\gamma)}\nabla_xc^{\e}\\
   =&\;\Xi_2\big[a^{\e},b^{\e},c^{\e}\big]-\nabla_x\cdot\int_{\mathbb R^3}  \hat{p}p\sqrt{\mathbf{M}}(\ik-\pk)[\fe]\,\ud p +\int_{\mathbb R^3}  p\sqrt{\mathbf{M}}\bar{h}^{\e}\,\ud p,\no\\
    &\;e_0(u^0)^2\partial_t a^{\e}+\frac{n_0(u^0)^3\big[3K_3(\gamma)+\gamma K_2(\gamma)\big]}{\gamma K_2(\gamma)}\partial_tc^{\e}+\frac{n_0u^0K_3(\gamma)}{\gamma K_2(\gamma)}\nabla_x\cdot b^{\e}\\
    =&\; \Xi_3\big[a^{\e},b^{\e},c^{\e}\big]+\int_{\mathbb R^3}  p^0\sqrt{\mathbf{M}}\bar{h}^{\e}\,\ud p,\no
   \end{align}
where $\Xi_j\big[a^{\e},b^{\e},c^{\e}\big]$ for $j=1,2,3$ denotes a combination of linear terms of $a^{\e}, b^{\e}, c^{\e}$ with coefficients $\nabla_{t,x}(n_0,u,T_0)$,  and derivatives of $a^{\e}, b^{\e}, c^{\e}$ terms with coefficient $u$. Since they are small perturbations and thus will not affect the estimates, we will ignore the details for clarity.

Note that in the above system, $\dt a^{\e}$ and $\dt c^{\e}$ are coupled together. We may further solve them separately.

Noticing that
\begin{align}
    P_0=\frac{n_0}{\gamma},\qquad e_0=n_0\left(\frac{K_3(\gamma)}{K_2(\gamma)}-\frac{1}{\gamma}\right)=n_0\left(\frac{K_1(\gamma)}{K_2(\gamma)}+\frac{3}{\gamma}\right),
\end{align}
we may further obtain
\begin{align}
    &\; n_0(u^0)^2\frac{\frac{K_1^2(\gamma)}{K_2^2(\gamma)}+\frac{3}{\gamma}\frac{K_1(\gamma)}{K_2(\gamma)}-1-\frac{3}{\gamma^2}}{\frac{K_1(\gamma)}{K_2(\gamma)}+\frac{3}{\gamma}}\partial_ta^{\e}+n_0u^0\frac{\gamma\frac{k_1^2(\gamma)}{K_2^2(\gamma)}+4\frac{K_1(\gamma)}{K_2(\gamma)}-\gamma}{\gamma\frac{K_1(\gamma)}{K_2(\gamma)}+3\gamma}\nabla_x\cdot b \label{a}\\
    =&\;\Xi_1\big[a^{\e},b^{\e},c^{\e}\big]+\int_{\mathbb R^3}  p^0\sqrt{\mathbf{M}}\bar{h}^{\e}\,\ud p+u^0\frac{2\frac{K_1^2(\gamma)}{K_2^2(\gamma)}+\frac{9}{\gamma}\frac{K_1(\gamma)}{K_2(\gamma)}-1+\frac{6}{\gamma^2}}{\frac{K_1(\gamma)}{K_2(\gamma)}+\frac{3}{\gamma}}\no\left(-\nabla_x\cdot\int_{\mathbb R^3}  \hat{p}\sqrt{\mathbf{M}}(\ik-\pk)[\fe]\,\ud p+\int_{\mathbb R^3}  \hat{p}\sqrt{\mathbf{M}}\bar{h}^{\e}\,\ud p\right),\no\\
    &\;n_0u^0\Big(\frac{K_1(\gamma)}{\gamma K_2(\gamma)}+\frac{4}{\gamma ^2}\Big)\partial_t b^{\e}+\frac{n_0}{\gamma}\nabla_xa^{\e}+n_0u^0\Big(\frac{K_1(\gamma)}{\gamma K_2(\gamma)}+\frac{4}{\gamma ^2}\Big)\nabla_xc^{\e}\label{b}\\
    =&\;\Xi_2\big[a^{\e},b^{\e},c^{\e}\big]-\nabla_x\cdot\int_{\mathbb R^3}  \hat{p}p\sqrt{\mathbf{M}}(\ik-\pk)[\fe]\,\ud p+\int_{\mathbb R^3}  \hat{p}p\sqrt{\mathbf{M}}\bar{h}^{\e}\,\ud p,\no\\
    &\;n_0(u^0)^3\Big(-\frac{K_1^2(\gamma)}{K_2^2(\gamma)}-\frac{3}{\gamma}\frac{K_1(\gamma)}{K_2(\gamma)}+1+\frac{3}{\gamma^2}\Big)\partial_t c^{\e}+\frac{n_0u^0}{\gamma ^2}\nabla_x\cdot b^{\e}\label{c}\\
    =&\;\Xi_3\big[a^{\e},b^{\e},c^{\e}\big]+\int_{\mathbb R^3}  p^0\sqrt{\mathbf{M}}\bar{h}^{\e}\,\ud p-u^0\Big(\frac{K_1(\gamma)}{K_2(\gamma)}+\frac{3}{\gamma}\Big)\Big[-\nabla_x\cdot\int_{\mathbb R^3}  \hat{p}\sqrt{\mathbf{M}}(\ik-\pk)[\fe]\,\ud p+\int_{\mathbb R^3}  \sqrt{\mathbf{M}}\bar{h}^{\e}\,\ud p\Big].\no
\end{align}
This system fully describes the evolution of $a^{\e}$, $b^{\e}$ and $c^{\e}$.

\paragraph{Macroscopic equations:}
Secondly, we turn to the macroscopic equations of $\fe$. Splitting $\fe$ as the macroscopic part $\pk[\fe]$ and the microscopic $(\ik-\pk)[\fe]$ part in \eqref{re-f1}, we have
\begin{align}\label{semp 6}
    &\Big(\partial_ta^{\e}+p\cdot \partial_tb^{\e}+p^0 \partial_t c^{\e}\Big)\mh+\hat{p}\cdot\Big(\nabla_xa^{\e}+ \nabla_x b^{\e}\cdot p+p^0 \nabla_x c^{\e}\Big)\mh=\ell^{\e}+h^{\e},
  \end{align}
where
\begin{align}
   \ell^{\e}&:= -\big(\partial_t+\hp\cdot\nabla_x\big)\big[(\ik-\pk)[\fe]\big]-\frac{1}{\e}\li[\fe],\\
   h^{\e}&:= \big(a^{\e}+p\cdot b^{\e}+p^0 c^{\e}\big)\big(\partial_t+\hp\cdot\nabla_x\big)\mh+\bar{h}^{\e}.
\end{align}
For fixed $t,x$, we compare the coefficients in front of
\begin{align}
    \left\{\mh, p_i\mh, p^0\mh, \frac{p_i}{p^0}\mh,\frac{p_i}{p^0}\mh,\frac{p_i^2}{p^0}\mh,\frac{p_ip_j}{p^0}\mh\right\}
\end{align}
on both sides of \eqref{semp 6} and get the following macroscopic equations:
\begin{align}\label{macabc}
    \partial_t a^{\e}&= \ell_a^{\e}+h^{\e}_a,\\
    \partial_t b^{\e}_i+\partial_i c^{\e}&=\ell_{bi}^{\e}+h^{\e}_{bi},\no\\
    \partial_t c^{\e}&=\ell_{c}^{\e}+h^{\e}_{c},\no\\
    \partial_i a^{\e}&= \ell_{ai}^{\e}+h^{\e}_{ai},\no\\
    \partial_i b^{\e}_i&= \ell_{ii}^{\e}+h^{\e}_{ii},\no\\
    \partial_i b_j^{\e}+\partial_j b_i^{\e}&= \ell_{ij}^{\e}+h^{\e}_{ij},\qquad i\neq j.\no
\end{align}
Here $\ell_a^{\e}, h_a^{\e}$, $\ell_{bi}^{\e}, h^{\e}_{bi}$,  $\ell_{c}^{\e}, h^{\e}_{c}$,  $\ell_{ai}^{\e}, h^{\e}_{ai}$,  $\ell_{ii}^{\e}, h^{\e}_{ii}$, and  $\ell_{ij}^{\e}, h^{\e}_{ij}$ take the form of
\begin{align}
    (\ell^{\e}, \zeta)\ \ \text{and}\ \  (h^{\e}, \zeta),
\end{align}
where $\zeta$ is linear combinations of
\begin{align}
   \left\{\mh, p_i\mh, p^0\mh, \frac{p_i}{p^0}\mh,\frac{p_i}{p^0}\mh,\frac{p_i^2}{p^0}\mh,\frac{p_ip_j}{p^0}\mh\right\}.
\end{align}
For $j=0,1$, we have the following estimates:
\begin{align}
    &\nm{\nabla_x^jh_a^{\e}}+\nm{\nabla_x^jh_{bi}^{\e}}+\nm{\nabla_x^jh_{c}^{\e}}+\nm{\nabla_x^jh_{ai}^{\e}}+\nm{\nabla_x^jh_{ii}^{\e}}+\nm{\nabla_x^jh_{ij}^{\e}}\label{h01}\\
    \ls&\, \e^{\frac{1}{2}}\nms{\nabla_x^j\fe}+\sum_{l=1}^{2k-1}\left(\bnm{\mhh F_l}_{H^2}\nms{\nabla_x^j\fe}+\bnm{\mhh F_l}_{H^2_{\sigma}}\nm{\nabla_x^j\fe}\right)+\zzz\nm{\fe}_{H^j}+\e^{k+1}\no\\
    \ls& \nms{\nabla_x^j(\ik-\pk)[\fe]}+\nm{\nabla_x^j\fe}+\zzz\nm{\fe}_{H^j}+\e^{k+1}.\no
\end{align}
Further proof can be completed by similar arguments in \cite[Lemma 6.1]{Guo2006}.
For brevity, we only give the estimate of $\|\nabla_x b^{\e}\|$ since other estimates can be derived similarly.
From the last two equalities in \eqref{macabc}, we have
\begin{align}
  -\triangle b^{\e}_j-\partial_j\nabla_x\cdot b^{\e}=-\sum_{i=1}^3\partial_i\big(\ell_{ij}^{\e}+h^{\e}_{ij}\big)\big(1+\delta_{ij}\big).
\end{align}
We multiply $b^{\e}_j$ and integrate over $\r^3$ to get
\begin{align}
   &\nm{\nabla_xb^{\e}}^2+ \nm{\nabla_x\cdot b^{\e}}^2=\sum_{i=1}^3\br{\big(\ell_{ij}^{\e}+h^{\e}_{ij}\big)\big(1+\delta_{ij}\big),\partial_i b^{\e}_j}\\
   =&\sum_{i=1}^3\big(1+\delta_{ij}\big)\br{\Big(-\big(\partial_t+\hp\cdot\nabla_x\big)\big[(\ik-\pk)[\fe]\big]-\frac{1}{\e}\li[\fe], \zeta_{ij}\Big),\partial_i b^{\e}_j}+\sum_{i=1}^3\big(1+\delta_{ij}\big)\br{\big(h^{\e}_{ij}, \zeta_{ij}\big),\partial_i b^{\e}_j}.\no
\end{align}
For the first term related to $(\ik-\pk)[\fe]$, we have
\begin{align}
    &\br{\Big(-\big(\partial_t+\hp\cdot\nabla_x\big)\big[(\ik-\pk)[\fe]\big]-\frac{1}{\e}\li[\fe], \zeta_{ij}\Big),\partial_i b^{\e}_j}\\
    =&\br{\Big(-\partial_t\big[(\ik-\pk)[\fe]\big], \zeta_{ij}\Big),\partial_i b^{\e}_j}+\br{\Big(-\hp\cdot\nabla_x\big[(\ik-\pk)[\fe]\big]-\frac{1}{\e}\li[\fe], \zeta_{ij}\Big),\partial_i b^{\e}_j}\no\\
    \leq&-\frac{\ud}{\ud t}\br{\Big(\big[(\ik-\pk)[\fe]\big], \zeta_{ij}\Big),\partial_i b^{\e}_j}+\br{\Big(\big[(\ik-\pk)[\fe]\big], \zeta_{ij}\Big),\partial_i\partial_t b^{\e}_j}\no\\
    &+o(1) \nm{\nabla_xb^{\e}}^2+C\Big(\nms{\nabla_x(\ik-\pk)[\fe]}^2+\e^{-2}\nms{(\ik-\pk_{\pm})[\fe_{\pm}]}^2\Big).\no
\end{align}
By \eqref{b} and \eqref{h01}, we have
\begin{align}
    \br{\Big(\big[(\ik-\pk)[\fe]\big], \zeta_{ij}\Big),\partial_i\partial_t b^{\e}_j}
    &=-\br{\Big(\partial_i\big[(\ik-\pk)[\fe]\big], \zeta_{ij}\Big),\partial_t b^{\e}_j}\\
    &\ls o(1)\big(\nm{\nabla_xa^{\e}}^2+\nm{\nabla_x c^{\e}}^2\big)+\nms{\nabla_x(\ik-\pk)[\fe]}^2+\nm{\fe}^2+\e^{k+1}.\no
\end{align}
Then we use \eqref{h01} again to obtain
\begin{align}
  \frac{1}{2} \nm{\nabla_xb^{\e}}^2+ \nm{\nabla_x\cdot b^{\e}}^2\leq&-\frac{\ud}{\ud t}\br{\Big(\big[(\ik-\pk)[\fe]\big], \zeta_{ij}\Big),\partial_i b^{\e}_j}+o(1)\big(\nm{\nabla_xa^{\e}}^2+\nm{\nabla_x c^{\e}}^2\big)\\
   &+\e^{-2}\nms{(\ik-\pk)[\fe]}^2+\nms{\nabla_x(\ik-\pk)[\fe]}^2+\nm{\fe}^2+\e^{k+1}.\no
\end{align}
\end{proof}

\bigskip

\section{Proof of the Main Theorem} \label{Sec:main-thm-pf}

\begin{proof}[Proof of Theorem~\ref{result 2}]
Our proof consists of two parts: energy estimates and positivity of solutions.

\paragraph{Proof of Energy Estimates:}
Multiplying \eqref{semp 5} by a small constant $\delta$ and adding it to the sum of \eqref{estimate 0}, \eqref{estimate 1} and \eqref{estimate 2}, we obtain
\begin{align}\label{ud}
  &\frac{\ud}{\ud t}\bigg(\Big(\nm{\fe}^2+\e\nm{\nabla_x\fe}^2+ \e^2\nm{\nabla_x^2\fe}^2\Big)-\delta\Big(\e\mathcal{E}^{mac}_1+\e^2\mathcal{E}^{mac}_2\Big)\bigg)\\
  &+\delta\Big(\e\nm{\nabla_x\pk[\fe]}^2+\e^2\nm{\nabla_x^2\pk[\fe]}\Big)\no\\
  &+\Big(\e^{-1}\nms{(\ik-\pk)[\fe]}^2+\nms{\nabla_x(\ik-\pk)[\fe]}^2+\e\nms{\nabla_x^2(\ik-\pk)[\fe]}^2\Big)\no\\
  \ls&\,\big(\e^{\frac{1}{2}}+\zz\big)\ee+\e\dd+\e^{2k+3}.\no
\end{align}
Multiplying \eqref{ud} by a large constant $C_1$  and adding it to the sum of \eqref{estimate 0'}, \eqref{estimate 1'},  and \eqref{estimate 2'}, we have
\begin{align}
    \frac{\ud}{\ud t}\bigg(\ee-\delta\Big(\e\mathcal{E}^{mac}_1+\e^2\mathcal{E}^{mac}_2\Big)\bigg)+\dd\ls \big(\e^\frac{1}{2}+\zz\big)\ee+\e\dd+\e^{2k+3}.
\end{align}
Hence, noticing that
\begin{align}
    \delta\Big(\e\mathcal{E}^{mac}_1+\e^2\mathcal{E}^{mac}_2\Big)\leq\frac{1}{2}\ee
\end{align}
for sufficiently small constant $\delta>0$, we have
\begin{align}
    \frac{\ud}{\ud t}\ee+\dd\ls \big(\e^\frac{1}{2}+\zz\big)\ee+\e^{2k+3}.
\end{align}
When $\e$ is sufficiently small, we know $\e^\frac{1}{2}\ls\zz$. Thus, we have
\begin{align}
    \frac{\ud}{\ud t}\ee\ls \zz\ee+\e^{2k+3}.
\end{align}
By Gronwall's inequality, we have
\begin{align}
    \ee(t)\ls\ue^{\zz t}\ee(0)+\e^{2k+3}\int_0^t\ue^{\zz(t-s)}\ud s\ls \ue^{\zz t}\ee(0)+\zz^{-1}\e^{2k+3}.
\end{align}
Due to \eqref{assump}, we know $\zz t\ls1$. Hence, we have
\begin{align}
    \ee(t)\ls\ee(0)+\e^{2k+3}.
\end{align}
This verifies the validity of \eqref{rr 01}.

\paragraph{Proof of Positivity:}
First we show that there exists $F^{\e}_R(0,x,p)$ such that $F^{\e}(0,x,p)\geq0$. The procedure can be proceeded in an analogous way as in \cite[Lemma A.2]{Guo2006}.
We first estimate the microscopic part of the coefficients $(\ik-\pk)\left[\frac{F_{i}}{\sqrt{\mathbf{M}}}\right]$. From the second line in \eqref{expan2}, we have
\begin{align}\label{F01}
\li\left[(\ik-\pk)\left[\frac{F_{1}}{\sqrt{\mathbf{M}}}\right]\right]=&-\frac{1}{\sqrt{\mathbf{M}}}\Big(\partial_t\mathbf{M}
+\hat{p}\cdot \nabla_x\mathbf{M}\Big)
\end{align}
Then, we use Lemma \ref{ss 01} to have
\begin{align}\label{micro11}
 \abs{(\ik-\pk)\left[\frac{F_{1}}{\sqrt{\mathbf{M}}}\right]}_{\sigma}\lesssim \big|\nabla_x(n_0,u,T_0)\big|.
    \end{align}
Similar to the proof of Lemma \ref{ss 04}, we can obtain that
for any $\kappa<1$,
\begin{align}\label{w11}
 \brv{\mathbf{M}^{-\kappa}\li\left[(\ik-\pk)\left[\frac{F_{1}}{\sqrt{\mathbf{M}}}\right]\right],(\ik-\pk)\left[\frac{F_{1}}{\sqrt{\mathbf{M}}}\right]}\ge  \abs{\mathbf{M}^{-\frac{\kappa}{2}}(\ik-\pk)\left[\frac{F_{1}}{\sqrt{\mathbf{M}}}\right]}_{\sigma}^2
-\abs{(\ik-\pk)\left[\frac{F_{1}}{\sqrt{\mathbf{M}}}\right]}_{\sigma}^2.
\end{align}
Now we combine \eqref{F01}, \eqref{micro11} and \eqref{w11} to have
\begin{align}
\\
    \abs{\mathbf{M}^{-\frac{\kappa}{2}}(\ik-\pk)\left[\frac{F_{1}}{\sqrt{\mathbf{M}}}\right]}_{\sigma}^2
    -C\abs{(\ik-\pk)\left[\frac{F_{1}}{\sqrt{\mathbf{M}}}\right]}_{\sigma}^2\lesssim o(1)\abs{\mathbf{M}^{-\frac{\kappa}{2}}(\ik-\pk)\left[\frac{F_{1}}{\sqrt{\mathbf{M}}}\right]}_{\sigma}^2+C\big|\nabla_x(n_0,u,T_0)\big|^2.\no
\end{align}
Namely,
\begin{align}
    \abs{\mathbf{M}^{-\frac{\kappa}{2}}(\ik-\pk)\left[\frac{F_{1}}{\sqrt{\mathbf{M}}}\right]}_{\sigma} \lesssim \big|\nabla_x(n_0,u,T_0)\big|.
\end{align}
Similarly, we can obtain that
\begin{align}
   \sum_{0\leq j\leq 2} \abs{\nabla_p^j\left(\mathbf{M}^{-\frac{\kappa}{2}}(\ik-\pk)\left[\frac{F_{1}}{\sqrt{\mathbf{M}}}\right]\right)}_{\sigma} \lesssim \big|\nabla_x(n_0,u,T_0)\big|.
\end{align}
By the Sobolev imbedding, this implies
\begin{align}
   (\ik-\pk)\left[\frac{F_{1}}{\sqrt{\mathbf{M}}}\right]\lesssim \mathbf{M}^{-\frac{\kappa}{2}}\big|\nabla_x(n_0,u,T_0)\big|.
   \end{align}
By induction, we can use the equations \eqref{number},\eqref{moment} and \eqref{energy} in the appendix to obtain
  \begin{align}\label{micfi}
   (\ik-\pk)\left[\frac{F_{i}}{\sqrt{\mathbf{M}}}\right]\lesssim \mathbf{M}^{-\frac{\kappa}{2}}\Big(\big|\nabla_x^i(n_0,u,T_0)\big|+\sum_{1\leq j\leq i-1}\big|\nabla_x^{i-j}(a_j,b_j,c_j)\big|\Big),
   \end{align}
  for all $\kappa<1$ and $2\leq i\leq 2k-1$. Here $a_j,b_j,c_j$ are the coefficients of the macroscopic part of $\pk\Big(\frac{F_{j}}{\sqrt{\mathbf{M}}}\Big)$ as in the appendix.
Noting the expression of the macroscopic part of $\frac{F_{i}}{\sqrt{\mathbf{M}}}$, We use \eqref{micfi}  to have
\begin{align}\label{unii}
F_{i}(t,x,p)\lesssim \mathbf{M}^{\kappa}\Big(\big|\nabla_x^i(n_0,u,T_0)\big|+\sum_{1\leq j\leq i}\big|\nabla_x^{i-j}(a_j,b_j,c_j)\big|\Big)
\end{align}
Now we choose $F^{\e}_R(0,x,p)$ in the following form
\begin{align}
 F^{\e}_R(0,x,p)=\mathbf{M}^{\tau}(0,x,p)\Big(\sum_{ j=1}^{2k-1}\big|\nabla_x^j(n_0,u,T_0)(0,x)\big|+\sum_{i=1}^{2k-1}\sum_{ j=0}^{2k-1-i}\big|\nabla_x^j(a_i,b_i,c_i)(0,x)\big|\Big)
\end{align}
with $0<\tau<1$. We choose $\kappa<1$ such that
\begin{align}
   k(1-\kappa)+\tau<\kappa.
\end{align}
From \eqref{unii}, we have
 \begin{align}
     \sum_{i=1}^{2k-1}\e^iF_i(0, x, p)&\leq C\e \mathbf{M}^{\kappa}(0,x,p)\Big(\sum_{ j=1}^{2k-1}\big|\nabla_x^j(n_0,u,T_0)(0,x)\big|+\sum_{i=1}^{2k-1}\sum_{ j=0}^{2k-1-i}\big|\nabla_x^j(a_i,b_i,c_i)(0,x)\big|\Big)\\
     &\leq C_0 \e \mathbf{M}^{\kappa}(0,x,p)\no
\end{align}
for some uniform constant $C_0\geq1$.
We discuss the positivity of $F^{\e}_R(0,x,p)$ in two domains in $\r^3_x\times\r^3_p$:
\begin{align}
    A:=\Big\{(x,p): \mathbf{M}(0,x,p)\geq C_0 \e \mathbf{M}^{\kappa}(0,x,p)\Big\},\qquad  B:=\Big\{(x,p): \mathbf{M}(0,x,p)< C_0 \e \mathbf{M}^{\kappa}(0,x,p)\Big\}.
\end{align}
In the domain $A$, by the expression of the Hilbert expansion \eqref{expan}, we have   $F^{\e}_R(0,x,p)\geq0$.
In the domain $B$, for the chosen $\kappa$, we have
\begin{align}
    \e^{k} \mathbf{M}^{\tau}(0,x,p)> C_0^{k+1}\e^{k+1} \mathbf{M}^{\tau}(0,x,p) \ge C_0 \e  \mathbf{M}^{k(1-\kappa)}(0,x,p)\mathbf{M}^{\tau}(0,x,p)\ge  C_0 \e \mathbf{M}^{\kappa}(0,x,p).
\end{align}
This implies that the remainder term is the dominant term in
\eqref{expan} and $F^{\e}(0,x,p)\geq0$ for $\e$ small enough. Therefore we have  $F^{\e}(0,x,p)\geq0$ for all $(x,p) $.

Based on the proof of \cite[Lemma 9, Page 307--308]{Strain.Guo2004},
we may rearrange the equation \eqref{main1} as
\begin{align}\label{semp}
    \dt F^{\e} + \hp \cdot \nx F^{\e}=&\; \frac{1}{\e}\,\c\left[F^{\e},F^{\e}\right]\\
    =&\;\frac{1}{\e}\left(\int_{\r^3}\Phi^{ij}(p,q)F^{\e}(q)\ud q\right)\p_{p_i}\p_{p_j}F^{\e}\no\\
    &\;+\frac{1}{\e}\left(\int_{\r^3}\p_{p_i}\Phi^{ij}(p,q)F^{\e}(q)\ud q\right)\p_{p_j}F^{\e}-\frac{1}{\e}\left(\int_{\r^3}\Phi^{ij}(p,q)\p_{q_j}F^{\e}(q)\ud q\right)\p_{p_i}F^{\e}\no\\
    &\;-\frac{1}{\e}\p_{p_i}\left(\int_{\r^3}\Phi^{ij}(p,q)\p_{q_j}F^{\e}(q)\ud q\right)F^{\e}.\no
\end{align}
In addition, based on \cite[Lemma 4]{Strain.Guo2004}, we know the last term on the R.H.S. of \eqref{semp} can be simplified as
\begin{align}
    -\frac{1}{\e}\p_{p_i}\left(\int_{\r^3}\Phi^{ij}(p,q)\p_{q_j}F^{\e}(q)\ud q\right)F^{\e}=\frac{4}{\e}\left(\int_{\r^3}\frac{p^{\mu}q_{\mu}}{p^0q^0}\Big((p^{\mu}q_{\mu})^2-1\Big)^{-\frac{1}{2}}F^{\e}(q)\ud q\right)F^{\e}+\frac{\kappa(p)}{\e}\big(F^{\e}\big)^2,
\end{align}
where
\begin{align}
    \kappa(p)=2^{\frac{7}{2}}\pi p^0\int_0^{\pi}\big(1+\abs{p}^2\sin^2\theta\big)^{-\frac{3}{2}}\sin\theta\ud\theta.
\end{align}
Then clearly, there is an elliptic structure on the R.H.S. of \eqref{semp}.
Therefore, using the maximum principle (see the proof of \cite[Lemma 9, Page 308]{Strain.Guo2004} and \cite[Theorem 1.1, Page 201]{Kim.Guo.Hwang2020}),
we have
\begin{align}
    \min_{t,x,p}\big\{F^{\e}\big\}=\min_{x,p}\big\{F^{\e}_0\big\} \geq 0.
\end{align}
Then for sufficiently smooth $F^{\e}$, as long as the initial data $F_0^{\e}\geq0$, we naturally have $F^{\e}\geq0$. For general $F^{\e}$, a standard mollification and approximation argument leads to the desired result.
\end{proof}

\bigskip

\appendix

\makeatletter
\renewcommand \theequation {%
A.%
\ifnum\c@subsection>\z@\@arabic\c@subsection.%
\fi\@arabic\c@equation} \@addtoreset{equation}{section}
\@addtoreset{equation}{subsection} \makeatother

\section {Appendix}\label{App}

In this part, we list our result about the construction and regularity estimates of the coefficients in the Hilbert expansion \eqref{expan}.
The proof can be done in a similar way as that in  \cite[Appendix 3]{Guo-Xiao-CMP-2021}, so we only record the results.
For $n\in[1, 2k-1]$, we decompose  $\frac{F_{ n}}{\sqrt{\mathbf{M}}}$ as the sum of macroscopic and microscopic parts:
\begin{align}\label{decom}
\frac{F_n}{\sqrt{\mathbf{M}}}&={\bf P}\left[\frac{F_n}{\sqrt{\mathbf{M}}}\right]+(\ik-\pk)\left[\frac{F_n}{\sqrt{\mathbf{M}}}\right]\\
&=\Big(a_n(t,x)+b_n(t,x)\cdot p+c_n(t,x) p^0\Big)\sqrt{\mathbf{M}}+(\ik-\pk)\left[\frac{F_n}{\sqrt{\mathbf{M}}}\right].\no
\end{align}

\begin{proposition}\label{fn} For any $n\in[0,2k-2]$, assume that $F_i$ have been constructed for all $0\leq i\leq n$. Then  the  microscopic part $(\ik-\pk)\left[\frac{F_{n+1}}{\sqrt{\mathbf{M}}}\right]$ can be written as:
\begin{align}
\begin{aligned}
(\ik-\pk)\bigg[\frac{F_{n+1}}{\sqrt{\mathbf{M}}}\bigg]=\li^{-1}\Bigg[-\frac{1}{\sqrt{\mathbf{M}}}\Big(\partial_tF_n
+\hat{p}\cdot \nabla_xF_n-\sum_{\substack{i+j=n+1\\i,j\geq1}}\c[F_i,F_j]\Big)\Bigg].
\end{aligned}
\end{align}
And $a_{n+1}(t,x), b_{n+1}(t,x), c_{n+1}(t,x)$ satisfy the following system:
\begin{align}\label{number}
& \partial_t\bigg(n_0u^0a_{n+1}+(e_0+P_0)u^0(u\cdot b_{n+1})+\big(e_0(u^0)^2+P_0|u|^2\big)c_{n+1}\bigg)\\
&+\nabla_x\cdot\Big(n_0u a_{n+1}+(e_0+P_0)u (u\cdot b_{n+1})+P_0 b_{n+1}+(e_0+P_0)u^0 u c_{n+1}\Big)\no\\
&+\nabla_x\cdot\int_{\mathbb R^3}  \hat{p}\sqrt{\mathbf{M}}(\ik-\pk)\left[\frac{F_{n+1}}{\sqrt{\mathbf{M}}}\right]\,\ud p=0,\no
\end{align}

\begin{align}
& \partial_t\bigg((e_0+P_0)u^0u_j a_{n+1}+\frac{n_0}{\gamma K_2(\gamma)}\Big((6K_3(\gamma)+\gamma K_2(\gamma))u^0 u_j (u\cdot b_{n+1})+K_3(\gamma)u^0 b_{n+1,j}\Big)\label{moment}\\
&+\frac{n_0}{\gamma K_2(\gamma)}\Big((5K_3(\gamma)+\gamma K_2(\gamma))(u^0)^2  +K_3(\gamma)|u|^2\Big)u_jc_{n+1}\bigg)\nonumber\\
&+\nabla_x\cdot\bigg((e_0+P_0)u_j u a_{n+1}+\frac{n_0}{\gamma K_2(\gamma)}\Big(6K_3(\gamma)+\gamma K_2(\gamma)\Big)u_j  u\Big((u\cdot b_{n+1})+u^0c_{n+1}\Big)\bigg)\nonumber\\
&+\partial_{x_j}(P_0a_{n+1})+\nabla_x\cdot\left(\frac{n_0K_3(\gamma)}{\gamma K_2(\gamma)}(ub_{n+1,j}+u_jb_{n+1})\right)\nonumber\\
&+ \partial_{x_j}\bigg(\frac{n_0K_3(\gamma)}{\gamma K_2(\gamma)}\Big((u\cdot b_{n+1})+u^0c_{n+1}\Big)\bigg)+\nabla_x\cdot\int_{\mathbb R^3}  \frac{p_jp}{p^0}\sqrt{\mathbf{M}}(\ik-\pk)\left[\frac{F_{n+1}}{\sqrt{\mathbf{M}}}\right]\,\ud p=0,\nonumber
\end{align}
for $j=1, 2, 3$ with $b_{n+1}=(b_{n+1,1}, b_{n+1,2}, b_{n+1,3})$, and
\begin{align}
& \partial_t\bigg(\Big(e_0(u^0)^2+P_0|u|^2\Big) a_{n+1}+\frac{n_0}{\gamma K_2(\gamma)}\Big(\big(5K_3(\gamma)+\gamma K_2(\gamma)\big) (u^0)^2+K_3(\gamma)|u|^2\Big)
(u\cdot b_{n+1})\label{energy}\\
&+\frac{n_0}{\gamma K_2(\gamma)}\Big(\big(3K_3(\gamma)+\gamma K_2(\gamma)\big)(u^0)^2  +3K_3(\gamma)|u|^2\Big)u^0c_{n+1}\bigg)\nonumber\\
&+\nabla_x\cdot\bigg((e_0+P_0)u^0 u a_{n+1}+\frac{n_0}{\gamma K_2(\gamma)}\big(6K_3(\gamma)+\gamma K_2(\gamma)\big)u^0 u(u\cdot b_{n+1})\no\\
&+\frac{n_0u^0K_3(\gamma)}{\gamma K_2(\gamma)}u^0b_{n+1}+\frac{n_0}{\gamma K_2(\gamma)}\Big(\big(5K_3(\gamma)+\gamma K_2(\gamma)\big)(u^0)^2+K_3(\gamma)|u|^2\Big)uc_{n+1} \bigg)\no\\
&+\nabla_x\cdot\int_{\mathbb R^3}  p\sqrt{\mathbf{M}}(\ik-\pk)\left[\frac{F_{n+1}}{\sqrt{\mathbf{M}}}\right]\,\ud p=0.\nonumber
\end{align}
Furthermore, assume $a_{n+1}(0,x), b_{n+1}(0,x), c_{n+1}(0,x)\in H^N$ with $N\geq0$
are given initial data to the system \eqref{number}, \eqref{moment} and \eqref{energy}. Then this
linear system is well-posed in $C^0([0,\infty);H^N)$. Moreover, it holds that for sufficiently large $N$
\begin{align}
&|F_{n+1}|\lesssim (1+t)^{n}\mathbf{M}^{1-},\qquad |\nabla_pF_{n+1}|\lesssim (1+t)^{n}\mathbf{M}^{1-},\nonumber\\
&|\nabla_xF_{n+1}|\lesssim (1+t)^{n}\mathbf{M}^{1-},\qquad |\nabla_x\nabla_pF_{n+1}|\lesssim (1+t)^{n}\mathbf{M}^{1-},\label{growth0}\\
&|\nabla_x^2F_{n+1}|\lesssim (1+t)^{n}\mathbf{M}^{1-},\qquad |\nabla_x^2\nabla_pF_{n+1}|\lesssim (1+t)^{n}\mathbf{M}^{1-}.\nonumber
\end{align}

\end{proposition}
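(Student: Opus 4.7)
The construction proceeds by induction on $n$, exploiting the Fredholm alternative for $\li$ together with the orthogonality properties of $\c$. Suppose $F_0,\ldots,F_n$ have already been built, with the claimed pointwise bounds. To extract $F_{n+1}$, the plan is to read the $\e^n$ identity in \eqref{expan2} as an equation for the microscopic component: using $F_0=\m$ and $\c[\m,\mh f]+\c[\mh f,\m]=-\mh\li[f]$, one obtains
\[
\mh\,\li[f_{n+1}] \;=\; -\partial_t F_n - \hp\cdot\nx F_n + \sum_{\substack{i+j=n+1\\ i,j\geq 1}}\c[F_i,F_j],\qquad f_{n+1}:=F_{n+1}/\mh.
\]
Since every $\c$-term lies in $\mh\,\mathcal{N}^\perp$ by the orthogonality \eqref{Phi}, applying $(\ik-\pk)$ and then $\li^{-1}$ yields the formula for $(\ik-\pk)[f_{n+1}]$ claimed in the proposition. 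The macroscopic component $\pk[f_{n+1}]=(a_{n+1}+b_{n+1}\cdot p+c_{n+1}p^0)\mh$ is left unconstrained at this step.

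To fix $(a_{n+1},b_{n+1},c_{n+1})$, I would impose the solvability condition at the \emph{next} order: projecting the $\e^{n+1}$ identity in \eqref{expan2} against $\mh\cdot\{1,p,p^0\}$ annihilates every collision term by conservation and yields
\[
\int_{\r^3}\big(\partial_tF_{n+1}+\hp\cdot\nx F_{n+1}\big)\{1,p,p^0\}\,\ud p \;=\; 0.
\]
Substituting \eqref{decom}, in which $(\ik-\pk)[f_{n+1}]$ is already known, and computing the moments of $\mh$ and $p^\mu\mh$ (which produce the Bessel-function coefficients $K_2(\gamma),K_3(\gamma)$ through standard relativistic moment formulas), one recovers \eqref{number}-\eqref{energy}. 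The relativistic Euler system \eqref{re} is used along the way to eliminate the time derivatives of $(n_0,u,T_0)$ that would otherwise appear in the coefficients, and the microscopic contribution shows up as the divergence fluxes $\nx\cdot\int\hp\,p^\mu\,\mh(\ik-\pk)[f_{n+1}]\,\ud p$.

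Finally, \eqref{number}-\eqref{energy} is a linear, $5\times 5$, symmetric-hyperbolic system for $(a_{n+1},b_{n+1},c_{n+1})$ with smooth coefficients built from $(n_0,u,T_0)$ and a forcing that depends linearly on derivatives of the inductively controlled $F_j$. Standard linear hyperbolic theory in $H^N$ produces a unique $C^0([0,\infty);H^N)$ solution, and a Gronwall estimate with forcing of inductive order $(1+t)^{n-1}$ yields the $(1+t)^n$ factor in \eqref{growth0}; the $\m^{1-}$ decay in $p$ follows from the Grad-type fact that $\li^{-1}$ preserves exponential decay up to an arbitrarily small loss in the exponent, together with the trivial decay of $(a_{n+1}+b_{n+1}\cdot p+c_{n+1}p^0)\mh$. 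The main technical obstacle is the bookkeeping of the Bessel-function identities needed to recast the moment equations in the explicit form \eqref{number}-\eqref{energy} and the verification that the resulting coefficient matrix is symmetrizable, so that no derivative loss occurs when solving in $H^N$; once these are in place, the rest parallels \cite[Appendix 3]{Guo-Xiao-CMP-2021}.
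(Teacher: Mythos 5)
The paper does not actually prove Proposition~\ref{fn}: Appendix~\ref{App} states only that ``the proof can be done in a similar way as that in \cite[Appendix 3]{Guo-Xiao-CMP-2021}, so we only record the results.'' Your sketch reconstructs precisely the standard route that citation refers to---isolate $(\ik-\pk)[F_{n+1}/\mh]$ from the $\e^n$ relation using $\c[\m,\mh f]+\c[\mh f,\m]=-\mh\li[f]$, then obtain the hyperbolic system for $(a_{n+1},b_{n+1},c_{n+1})$ from the solvability (moment) conditions on the $\e^{n+1}$ relation---so the approach is the right one. Two small points are worth tightening. First, the collision terms are annihilated by pairing the $\e^{n+1}$ identity against $\{1,p,p^0\}$ (not against $\mh\{1,p,p^0\}$, since the conservation laws read $\int\c[g,h]\{1,p,p^0\}\,\ud p=0$); what you wrote in the displayed equation is correct, but the surrounding prose should match. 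Second, the formula as stated in the proposition applies $\li^{-1}$ \emph{directly} to $-\mhh(\partial_tF_n+\hp\cdot\nx F_n-\sum_{i,j\geq1}\c[F_i,F_j])$ without an explicit $(\ik-\pk)$; this is only well defined because that quantity already lies in $\mathcal{N}^{\perp}$, which is exactly the solvability condition imposed at the previous induction step when $(a_n,b_n,c_n)$ were determined---your phrase ``applying $(\ik-\pk)$ and then $\li^{-1}$'' obscures the fact that the projection is vacuous by induction, and this should be said. With those clarifications, the remainder of your outline (five conservation laws giving a linear first-order system in $(a_{n+1},b_{n+1},c_{n+1})$, linear hyperbolic well-posedness in $H^N$, Gronwall for the $(1+t)^n$ growth, and the Landau analogue of Grad's estimate that $\li^{-1}$ gains $\m^{1-}$ decay with arbitrarily small loss of exponent) is consistent with what the paper defers to \cite{Guo-Xiao-CMP-2021}.
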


\bigskip

\section*{Acknowledgement}

The work of Lei Wu is supported by NSF Grant DMS-2104775.
The work of Qinghua Xiao is supported by grant from the National Natural Science Foundation of China under contract 11871469.

\bigskip

\phantomsection

\addcontentsline{toc}{section}{Reference}

\end{document}